\pgfplotsset{compat=1.15}
\numberwithin{equation}{section}
\title[]{Sobolev $\yh$ estimates for $\dbar$ equations on strictly pseudoconvex domains with $C^2$ boundary}          
\author[]{Ziming Shi} 
\author[]{Liding Yao} 
\address{Department of Mathematics,
	Rutgers University - New Brunswick, Piscataway, NJ, 08854}
\email{zs327@rutgers.edu}
\address{Department of Mathematics,
	University of Wisconsin-Madison, Madison, WI 53706}
\email{lyao26@wisc.edu} 
\keywords{Strongly pseudoconvex domains, homotopy formula, Sobolev estimates} 
\subjclass[2020]{32A26(Primary), 32T15, 42B25, and 46E35 (Secondary)} 
\newcommand{\und}[1]{{\textcolor{red}{#1}}}
\newcommand{\dist}{\operatorname{dist}}
\newcommand{\supp}{\operatorname{supp}}
\newcommand{\loc}{\mathrm{loc}}
\newtheorem{thm}{Theorem}[section]
\newtheorem{cor}[thm]{Corollary} 
\newtheorem{prop}[thm]{Proposition}
\newtheorem{lemma}[thm]{Lemma}
\theoremstyle{definition}
\newtheorem{defn}[thm]{Definition}
\newtheorem{exmp}[thm]{Example}
\newtheorem{ques}[thm]{Question}
\theoremstyle{remark}
\newtheorem{rem}[thm]{Remark}
\newtheorem*{clm}{Claim}
\newtheorem*{ack}{Acknowledgment}
\renewcommand{\th}[1]{\begin{thm}\label{#1}}
	\renewcommand{\eth}{\end{thm}}
\newcommand{\co}[1]{\begin{cor}\label{#1}}
	\newcommand{\eco}{\end{cor}}
\newcommand{\pr}[1]{\begin{prop}\label{#1}}
	\newcommand{\epr}{\end{prop}}
\newcommand{\df}[1]{\begin{defn}\label{#1}}
	\newcommand{\edf}{\end{defn}}
\newcommand{\ex}[1]{\begin{exmp}\label{#1}} 
	\newcommand{\eex}{\end{exmp}}
\newcommand{\qu}[1]{\begin{ques}\label{#1}}
	\newcommand{\equ}{\end{ques}}  
\newcommand{\mk}{\begin{rem}}
	\newcommand{\emk}{\end{rem}}
\newcommand{\cl}{\begin{clm}}
	\newcommand{\ecl}{\end{clm}} 
\newcommand{\ac}{\begin{ack}}
	\newcommand{\eac}{\end{ack}} 
\newcommand{\ga}{\begin{gather}}
\newcommand{\ega}{\end{gather}}
\newcommand{\gan}{\begin{gather*}}
\newcommand{\egan}{\end{gather*}}
\newcommand{\al}{\begin{gngn}}
	\newcommand{\eal}{\end{align}}
\newcommand{\aln}{\begin{align*}}
\newcommand{\ealn}{\end{align*}}
\newcommand{\eq}[1]{\begin{equation}\label{#1}}
\newcommand{\eeq}{\end{equation}}
\newcommand{\pa}{\partial{}}
\newcommand{\we}{\wedge}
\newcommand{\ra}{\longrightarrow} 
\newcommand{\hra}{\hookrightarrow}
\newcommand{\sm}{\setminus}
\newcommand{\seq}{\subseteq}
\newcommand{\B}{\mathbb{B}} 
\newcommand{\Z}{\mathbb{Z}}
\newcommand{\R}{\mathbb{R}} 
\newcommand{\C}{\mathbb{C}}
\newcommand{\N}{\mathbb{N}}
\newcommand{\V}{\mathcal{V}}
\newcommand{\U}{\mathcal{U}} 
\newcommand{\mc}{\mathcal}
\newcommand{\mf}{\mathfrak}
\newcommand{\ms}{\mathscr}
\newcommand{\mb}{\mathbf}
\newcommand{\tit}{\textit} 
\newcommand{\0}{\mathbf{0}} 
\newcommand{\1}{\mathbf{1}}
\newcommand{\ov}{\overline}
\newcommand{\ti}{\tilde}
\newcommand{\wti}{\widetilde}
\newcommand{\hht}{\widehat}
\newcommand{\RE}{\operatorname{Re}}
\newcommand{\IM}{\operatorname{Im}}
\newcommand{\dbar}{\overline\partial}
\newcommand{\sign}{\operatorname{sign}}
\newcommand{\all}{\alpha}
\newcommand{\del}{\delta}
\newcommand{\Del}{\Delta}
\newcommand{\var}{\varphi}
\newcommand{\ve}{\varepsilon}
\newcommand{\om}{\omega}
\newcommand{\Om}{\Omega}
\newcommand{\La}{\Lambda}
\newcommand{\la}{\lambda}
\newcommand{\gm}{\gamma}
\newcommand{\Gm}{\Gamma}
\newcommand{\yh}{\frac{1}{2}}
\newcommand{\re}[1]{(\ref{#1})}
\newcommand{\rl}[1]{Lemma~\ref{#1}}
\newcommand{\rp}[1]{Proposition~\ref{#1}}
\newcommand{\rt}[1]{Theorem~\ref{#1}}
\newcommand{\rd}[1]{Definition~\ref{#1}}
\newcommand{\rrem}[1]{Remark~\ref{#1}}
\newcommand{\nn}{\nonumber}
\newcommand{\nid}{\noindent}
\newcounter{pp}
\newcommand{\bpp}{\begin{list}{$\hspace{-1em}\alph{pp})$}{\usecounter{pp}}}
	\newcommand{\epp}{\end{list}}
\newcounter{ppp}
\newcommand{\bppp}{\begin{list}{$\hspace{-1em}(\roman{ppp})$}{\usecounter{ppp}}}
	\newcommand{\eppp}{\end{list}}
\newcommand{\Bb}{\mathbb{B}}
\newcommand{\Bs}{\mathscr{B}}
\newcommand{\Df}{\mathfrak{D}}
\newcommand{\Ec}{\mathcal{E}}
\newcommand{\Fs}{\mathscr{F}}
\newcommand{\Gf}{\mathfrak{G}}
\newcommand{\Hc}{\mathcal{H}}
\newcommand{\Kb}{\mathbb{K}}
\newcommand{\Nc}{\mathcal{N}}
\newcommand{\Sc}{\mathcal{S}}
\newcommand{\Ss}{\mathscr{S}}
\newcommand{\Uc}{\mathcal{U}}
\newcommand{\eps}{\varepsilon}
\begin{document}
	\definecolor{rvwvcq}{rgb}{0.08235294117647059,0.396078431372549,0.7529411764705882}

\begin{abstract} 
	We construct a solution operator for $\dbar$ equation that gains $\yh$ derivative in the fractional Sobolev space $H^{s,p}$ on bounded strictly pseudoconvex domains in $\C^n$ with $C^2$ boundary, for all $1 < p < \infty$ and $ s > \frac{1}{p}$. 
	
\end{abstract} 

	\maketitle

	\tableofcontents
	
\section{Introduction}
 The main result of the paper is the following: 
 \begin{thm} \label {thm1_intro}
   Let $\Om$ be a bounded strictly pseudoconvex domain in $\C^n$ with $C^2$ boundary, for $n \geq 2$. Suppose $\var$ is a $\dbar$-closed $(0,q)$ form in $\Om$ where $q \geq 1$. If $\var \in H^{s,p}(\Om)$, for $1 < p < \infty$ and $s > \frac{1}{p}$, 
   then there exists a $(0,q-1)$ form $u$ that solves the equation $\dbar u = \var$, such that $u \in H^{s+ \yh, p}(\Om)$.  Here $H^{s, p}(\Omega)$ is the fractional Sobolev space on $\Om$ (see \rd{Sob_dom_def}). 
 \end{thm}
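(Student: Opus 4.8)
The plan is to build the solution operator explicitly from an integral (homotopy) formula and then estimate it term by term in the fractional Sobolev scale. The classical approach, going back to Henkin--Ramírez and refined by Range, Lieb--Range, and more recently by Gong and others, produces an operator $u = \Hc\var$ satisfying $\var = \dbar \Hc \var + \Hc (\dbar \var)$ on $\Om$, where $\Hc$ is constructed from a Cauchy--Fantappiè type kernel built out of a holomorphic support function for the strictly pseudoconvex domain $\Om$. When $\var$ is $\dbar$-closed this reduces to $\dbar u = \var$, so the entire problem becomes the mapping property $\Hc \colon H^{s,p}(\Om) \to H^{s+\yh,p}(\Om)$. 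The first step is therefore to set up the homotopy formula carefully in the $C^2$-boundary regime: since the defining function is only $C^2$, the Levi polynomial / support function has limited smoothness, so I would construct the Henkin kernel with the minimal-regularity version of the support function (as in the work adapting these constructions to $C^2$ domains) and isolate the leading singular term.

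The core of the argument is the kernel estimate. After localizing near a boundary point and straightening the boundary, the operator $\Hc$ decomposes into a "good" part with a smooth, weakly singular kernel (which trivially gains a full derivative, hence more than $\yh$) and an "essential" part whose kernel behaves like the model kernel on the Siegel upper half space, with the anisotropic singularity governed by the Koranyi ball: roughly $|\Phi(z,\zeta)|^{-n} |z-\zeta|^{-1}\,\text{(smooth)}$ where $\Phi$ is the Henkin denominator with $\RE \Phi \gtrsim |\rho(z)| + |\rho(\zeta)| + |\IM\Phi| + |z-\zeta|^2$. The second step is to prove that this essential operator gains exactly $\yh$ derivative on $L^p$; I would do this by the standard anisotropic analysis — dyadically decompose in the distance to the boundary and in the Koranyi distance, reducing to fractional integration along the $2n-1$ dimensional boundary with a $\yh$-smoothing in the transverse direction coming from the extra $|\rho|$ in $\RE\Phi$. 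Here I would invoke the fractional Sobolev space machinery on $\Om$ from \rd{Sob_dom_def}, using an intrinsic characterization (Besov/Triebel--Lizorkin-type difference norms or a Rychkov-type extension operator) so that the gain is visible as a gain in the difference norm exponent.

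For $s > 1/p$ (so that traces make sense and $H^{s,p}(\Om)$ embeds appropriately), the higher-regularity statement follows by commuting derivatives past the kernel: tangential derivatives of $\Hc\var$ can be rewritten, via integration by parts on the boundary and the structure of $d\zeta$, as $\Hc$ applied to derivatives of $\var$ plus lower-order commutator terms whose kernels are no more singular; the normal direction is handled using $\dbar u = \var$ itself to trade a normal derivative of $u$ for $\var$ plus tangential derivatives. Iterating this bootstrap and interpolating (using the established $L^p\to$ "$L^p$ with $\yh$ gain" endpoint together with the integer-order estimates) yields $\Hc\colon H^{s,p}\to H^{s+\yh,p}$ for all the claimed $s$.

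The main obstacle I anticipate is the low boundary regularity: with only $C^2$ data the support function and the kernel $\Phi$ are barely regular enough for the Cauchy--Fantappiè machinery, so the commutator and integration-by-parts steps in the bootstrap cannot be done naively — one loses derivatives onto $\rho$. Controlling these error terms (likely by smoothing $\rho$ at scale comparable to the dyadic distance to the boundary and summing the resulting errors, à la a Littlewood--Paley/para-product decomposition) is where the real work lies; everything else is a careful but essentially known kernel computation.
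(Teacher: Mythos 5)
Your overall architecture (homotopy formula from a Cauchy--Fantappi\`e kernel, reduction to a mapping property of $\Hc$, interpolation between endpoints) matches the paper's, but the mechanism you propose for the actual estimates has a genuine gap that you yourself flag without resolving. Your higher-regularity step rests on commuting derivatives past the kernel via integration by parts on the boundary, trading normal derivatives of $u$ for tangential ones through $\dbar u=\var$, and bootstrapping. This is essentially the Siu / Lieb--Range scheme, and it is exactly the step that forces $C^{k+2}$ boundary regularity: each integration by parts lands a derivative on the support function, hence on $\rho$, and with $\rho\in C^2$ you cannot perform even one such step for the top-order terms without losing control. Your proposed fix --- mollifying $\rho$ at a scale tied to the dyadic distance to the boundary and summing errors --- is a recognizable technique, but you give no argument that the resulting error series converges in $H^{s,p}$ for fractional $s$ all the way down to $s>1/p$, and that range is precisely what makes the theorem nontrivial (for $s$ near $1/p$ the form $\var$ has essentially no classical derivatives to integrate by parts against). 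So as written the proof does not close.

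The idea that actually overcomes this, and which is absent from your proposal, is to move all the boundary-regularity pain into the \emph{extension operator} rather than the kernel. The paper writes $\Hc_q\var=\int K^0\wedge \Ec\var+\int_{\U\setminus\overline\Om}K^{01}\wedge[\dbar,\Ec]\var$, where $\Ec$ is Rychkov's universal extension operator built from a $\Kb$-dyadic pair. The whole point is the commutator estimate $\|\delta^{1-s}[D,E_\omega]f\|_{L^p(\overline\omega^c)}\lesssim\|f\|_{H^{s,p}(\omega)}$ (Theorem \ref{commest_thm}): because $[\dbar,\Ec]\var$ is supported outside $\Om$ and decays at the prescribed weighted rate, one can differentiate the kernel $m$ times, pay with a weight $\delta(z)^{m-s-1/2}$, and recover $H^{s+1/2,p}(\Om)$ via the weighted Hardy--Littlewood--Sobolev embeddings of Section 5 --- with no integration by parts and no use of $\dbar$-closedness in the kernel estimates. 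Your sketch contains neither the commutator with the extension operator nor the weighted-Sobolev embedding step, and without some substitute for them the $C^2$ case does not follow from the computation you outline.
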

 
We also prove an analogous result when $\var$ is in Hölder-Zygmund space $\La^r(\Omega)$ which improves an earlier theorem of Gong \cite{Gong19}. 
 
  \begin{thm} \label {thm2_intro} 
Keeping the assumptions of the above theorem. Suppose $\var \in \La^r (\Om)$, where $r > 0$. Then there exists a solution $u$ for $\dbar u = \var$ such that $u \in \La^{r+\yh} (\Om)$. Here $\La^r$ is the H\"older-Zygmund space. 
  	
  \end{thm}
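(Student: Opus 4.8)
The plan is to derive Theorem~\ref{thm2_intro} from the same integral construction that underlies Theorem~\ref{thm1_intro}. On a bounded \spcd{} $\Om$ with $C^2$ boundary one builds a Koppelman-type homotopy formula
\[
\var = \dbar H_q \var + H_{q+1}\dbar\var, \qquad q\ge 1,
\]
where $H_q$ is an integral operator whose kernel is assembled from a Cauchy--Fantappi\`e (Leray) section adapted to the $C^2$ defining function of $\Om$ together with the Bochner--Martinelli kernel. Since $\var$ is $\dbar$-closed we have $H_{q+1}\dbar\var=0$, so $u:=H_q\var$ already solves $\dbar u=\var$; thus everything reduces to the single operator estimate
\[
\|H_q\var\|_{\La^{r+\yh}(\Om)} \ls \|\var\|_{\La^{r}(\Om)}, \qquad r>0.
\]

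Rather than interpolating Theorem~\ref{thm1_intro} (which, combined with the embedding $H^{s,p}\hookrightarrow\La^{s-2n/p}$ and letting $p\to\infty$, would only yield $\La^{r+\yh-\e}$ and hence lose the sharp gain), I would estimate $H_q$ directly in H\"older--Zygmund norms. Split $H_q$ into its ``solid'' part --- essentially a Newtonian-type potential that gains a full derivative, handled by classical fractional-integration estimates on H\"older spaces --- and its ``Henkin'' part carrying the anisotropic kernel whose denominator is comparable to $|S(z,\zeta)|$ for the Leray support function $S$, with $|S(z,\zeta)|\gtrsim\dist(z,\partial\Om)+\dist(\zeta,\partial\Om)+|\IM S|+|\zeta-z|^2$. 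Passing to the usual admissible (polar-type) coordinates near $\partial\Om$, one sees that the kernel is genuinely one order worse only in the single ``bad'' complex-normal direction, where the denominator has size $\delta+|\zeta-z|^2$; integrating out that direction is what produces the gain of exactly $\yh$, uniformly over the remaining $2n-1$ ``good'' variables.

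The main obstacle --- and the point where one improves on \cite{Gong19} --- is that with only $C^2$ boundary the support function $S(z,\zeta)$ has very limited regularity in $z$, so its kernel cannot be differentiated in $z$ more than (essentially) twice; consequently the $\La^{r+\yh}$ norm must be measured through first and second finite differences $\Delta_h,\Delta_h^2$ in $z$ rather than through pointwise derivatives, and this must be made to work for all $r>0$ at once. The estimate for $\Delta_h^2(H_q\var)(z)$ is then obtained by the standard device of splitting the $\zeta$-integral at $|\zeta-z|\sim|h|$: on $\{|\zeta-z|\ls|h|\}$ one uses the crude size bound for the kernel, and on $\{|\zeta-z|\gtrsim|h|\}$ one uses that each finite difference in $z$ applied to the kernel behaves like a genuine derivative of the correct anisotropic order (costing one negative power of the relevant coordinate, recovered by the anisotropy), followed by summation over dyadic shells in $|\zeta-z|$ together with an interpolation that absorbs the $\La^r$ seminorm of $\var$. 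Carrying out these kernel difference estimates with the sharp exponents --- precisely the $L^\infty$-analogues of the kernel lemmas proved for the Sobolev case --- is the technical heart of the argument; once they are in place, the geometric summation and the passage between the $|h|$-small and $|h|$-large regimes are routine.
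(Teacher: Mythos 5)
Your reduction to the operator estimate $\|H_q\var\|_{\La^{r+\yh}}\ls\|\var\|_{\La^r}$ and your identification of where the $\yh$ comes from (integrating the anisotropic denominator $\delta+|\zeta-z|^2$ in the bad complex-normal direction) match the paper. But the difficulty you single out as ``the main obstacle'' is not the real one, and the actual obstruction is not addressed. The Leray section built from a strictly plurisubharmonic $C^2$ defining function is holomorphic (in fact affine) in $z$, so $\pa_z^{j}\Phi^{-(n-l)}$ is controlled for \emph{every} $j$ — the paper uses exactly this in \eqref{Phizde} and happily takes $m$ derivatives in $z$. The limited regularity sits in $\zeta$, which is why no integration by parts in $\zeta$ is allowed; your second-finite-difference scheme in $z$ is a workaround for a problem that does not occur, and is unnecessary once a weighted gradient bound is available, since one $z$-derivative plus the Hardy--Littlewood lemma (Lemma \ref{H-L_Holder}) already gives $\La^{r+\yh}$.

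The genuine gap is that you never explain how the ``Henkin part'' acts on a form that is only $\La^r$ with $0<r\le1$, which is precisely the range where the theorem improves on \cite{Gong19}. The boundary-integral Koppelman formula is not usable there (Siu and Lieb--Range need $C^{k+2}$ boundary, integer $k\ge1$, and exploit $\dbar$-closedness in the kernel manipulations), and the paper instead replaces the boundary term by a solid integral over $\U\sm\overline\Om$ against the commutator $[\dbar,\Ec]\var$ of Rychkov's extension operator. For $r\le1$ the bound $[\dbar,\Ec]:\La^r\to\La^{r-1}$ is useless ($\La^{r-1}$ is a distribution space), and the key new ingredient is the weighted estimate $\|\delta^{1-r}[\dbar,\Ec]\var\|_{L^\infty(\U\sm\overline\Om)}\ls\|\var\|_{\La^r(\Om)}$ (Corollary \ref{Cor::Comm_Hold}), which combines with the kernel integral Lemma \ref{intestle} (with $\alpha=r-1$, $\beta=0$) to give $|D\Hc_q^1\var(z)|\ls\delta(z)^{r-\yh}\|\var\|_{\La^r}$ for $0<r<\yh$; the remaining range is then filled by complex interpolation of $\La^r$ spaces against Gong's $r>1$ result. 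Your proposal contains no substitute for this commutator estimate, so the case $0<r\le1$ is not reached.
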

 It is well-known that on a bounded strictly pseudoconvex domain in $\C^n$ with sufficiently smooth boundary, there exist solutions $u$ to the equation $\dbar u = \var$ 
which gains $1/2$ derivative up to boundary if $\var$ belongs to some suitable space. 

In the category of  the $L^2$ Sobolev space (denoted by $H^{s,2}$), one can obtain a solution in the form $\dbar^{\ast} \Nc \var$ where $\Nc$ is the solution operator for the associated $\dbar$-Neumann boundary value problem, and $\dbar^{\ast}$ is the adjoint of $\dbar $ in the $L^2$ Hilbert space. When the boundary  $b\Om$ is $C^{\infty}$, Kohn in his famous work \cite{Koh63} showed that the solution $\dbar^{\ast} \Nc \var$ is in $H^{s+\yh,2}(\Om)$ if $ \var \in H^{s,2}(\Om)$ for any $s \geq 0$. See \cite[Cor.4.4.2, Thm 5.2.6]{C-S01}. 

Later on Greiner and Stein \cite[p.~174]{G-S77}, proved that for any $(0,1)$ form $\var \in H^{k,p}(\Om)$ where $k$ is a non-negative integer and $1 < p < \infty$, $\dbar^{\ast} \Nc \var \in H^{k+ \yh, p}(\Om)$. Their results were later extended by Chang \cite{Chang89} to any $(0,q) $ forms $\var$ for $1 \leq q \leq n$. 
Similarly one can obtain a gain of $\yh$ derivative for the operator $\dbar^{\ast} \mc{N}$ in the Hölder-Zygmund space $\La^r$, for all $r>0$. See \cite[p.~174]{G-S77}. All of these results require that $b\Om \in C^{\infty}$. 
 
Besides the $\dbar$-Neumann approach, one can also solve the $\dbar$ equation on strictly pseudoconvex domains using integral formula with certain ``holomorphic like" kernels. The solutions obtained through this method are no longer $L^2$ canonical,  but have the advantage that boundary no longer needs to be $C^{\infty}$. In this direction,  
Henkin and Ramanov in \cite{H-R71} first constructed a solution which is in $C^{\yh}(\overline\Om)$ if $\var$ is a $(0,1)$ form in the class $C^0 (\overline\Om)$ and $b\Om \in C^2$. Later Siu \cite{Siu74} and Lieb-Range \cite{L-R80} found solutions that are in $C^{k+\yh}(\overline\Om)$ for $\var$ in $C^k(\overline\Om)$, if the boundary is $C^{k+2}$ and $k$ is a positive integer. 
 The requirement on the smoothess of boundary is a result of using integration by parts on certain boundary integral. It is also important to point out that in both papers, the estimates for the solution operators rely on the fact that $\var$ is $\dbar$ closed. 

More recently, Gong \cite{Gong19} used the integral formula method to construct a $\dbar$ solution operator for any $C^2$ strictly pseudoconvex domains, and the solution $u$ lies in $\La^{ r+ \yh}(\Om)$ if $\var$ is any $(0,q) $ form ($q \geq 1$) in the class $\La^r(\Om)$, for all $r>1$. 

In our paper we give a variant of Gong's solution operator which allows one to work on Sobolev spaces when the boundary is $C^2$. Furthermore our operator allows us to obtain $\yh$ estimate when the right-hand side is $\La^r(\Om)$, for all $r > 0$, which improves the above result of Gong. See also \cite{DG19} for estimates on a certain class of weighted Sobolev space.

Here is the outline of the paper: In Section 2 we review the definition and properties of the function spaces we are using. To do estimates we need a characterization of the Sobolev space by Littlewood-Paley theory. We also include some results on interpolation which will be used extensively in our proofs. In Section 3 we recall Rychkov's universal extension operator $E_{\om}$ on a special Lipschitz domain $\om$, whose boundary is the graph of a Lipschitz function. 
Section 4 contains the most technical part of the paper. We show that the commutator $[D,E_{\om}]= DE_{\om}-E_{\om} D$ is in $L^p(\Om, \la)$, where the weight $\la$ is some power of the distance-to-boundary function. In Section 5 we prove various results on the embedding of weighted Sobolev spaces  $W^{k,p}(\Om, \la)$ to $H^{s,p}(\Om)$ spaces. 
Much of the results in this section are probably not new and the procedures are quite routine, although we are unable to find references for the actual results. Section 6 and Section 7 contain the estimates for the homotopy operators which lead to the proof of \rt{thm1_intro} and \rt{thm2_intro}. The main novelty here is the introduction of a weight factor which seems necessary to prove the relevant estimates. 
We mention that the commutator was first introduced by Peters \cite{Pe91} and have been used by Michel \cite{Mi91}, Michel-Shaw \cite{M-S99} among others.

Throughout the paper we assume that all the domains are in $\C^n$ for $n \geq 2$. We denote the set of non-negative integers by $\N $, and the set of positive integers by $\Z^+$. For a set $\Omega\subset\R^N$ we denote $\Omega^c=\R^N\backslash\Omega$. 
We will use the notation $x \lesssim y$ to mean that $x \leq Cy$ where $C$ is a constant independent of $x,y$, and $x \approx y$ for ``$x \lesssim y$ and $y \lesssim x$''. 
For the unit ball in $\R^N$ we use $\B^N$. 
\begin{ack}
 We would like to thank Xianghong Gong for the original inspiration leading to this problem and many helpful discussions. We would like to thank Andreas Seeger for helpful comments about function spaces and the extension operator.  
\end{ack}

\section{Function Spaces and Interpolation}
In this section we review some basic facts about function spaces. 
	\begin{defn}
	Let $\Omega\subseteq\R^N$ be an open set, and let $k\in\N$, $1\leq p<\infty$. We denote by $W^{k,p}(\Om)$ the space of (complex-valued) functions $f\in L^p(\Omega)$ such that $D^\alpha f\in L^p(\Omega)$ for all $|\alpha|\leq k$, with the norm
	\begin{gather*}
	\|f\|_{W^{k,p}(\Omega)}:=\sum_{|\alpha|\leq k}\|D^\alpha f\|_{L^p(\Omega)}=\sum_{|\alpha|\leq k}\left(\int_\Omega|D^\alpha f (x)|^p\right)^\frac1p \, dV(x),\quad 1\le p<\infty.
	\end{gather*}
	
Let $\lambda$ be a positive continuous function on $\Omega$. We define the weighted Sobolev space $W^{k,p}_{\la}(\Omega)$ as the space of $f$ in $W^{k,p}_\loc(\Omega)$ such that the following norm is finite:
	\begin{gather*}
	\| f \|_{W^{k,p}(\Om, \la)}
	:= \sum_{ |\alpha|\leq k}\| \la  D^\alpha f\|_{L^p(\Om)} 
	=\sum_{|\alpha|\leq k}\left(\int_\Omega|D^\alpha f (x)|^p \lambda(x)^p \, dV(x)\right)^\frac1p ,\quad 1\le p<\infty.
	\end{gather*}
	If $\Omega$ is a domain in $\C^n$ with complex variable $z$, we write instead  $\lambda(z)^p dV(z)$. 
\end{defn}
In our application we will take $\lambda(x)=\dist(x,b\Omega)^s$ for some $s\in\R$.
	
We shall use $\Ss(\R^N)$ to denote the space of Schwartz functions, and $\Ss'(\R^N)$ for the space of tempered distributions.  
\begin{defn}\label{Defn::SpecialDomain}
A \textit{special Lipschitz domain} $\omega$ is an open set of $\R^N$ with Lipschitz boundary that has the following form
\[  
	\omega= \{(x',x_N)\in \R^N: x_N>\rho(x')\},
\] 
		where $\rho\in C^{0,1}(\R^{N-1})$ satisfies $\|\rho\|_{C^{0,1}}=\sup_{\R^{N-1}}|\rho|+\sup_{\R^{N-1}}|\nabla\rho|<\frac12$. 
		
	A \textit{bounded Lipschitz domain} $\Omega$ is an bounded open set of $\R^N$ such that for any $p\in b\Omega $ there is an affine linear transformation $\Phi:\R^N\to\R^N$ and a special Lipschitz domain $\omega$ such that
	\begin{equation*}
	    \Omega\cap\Phi(\B^N)=\Phi(\omega\cap\B^N).
	\end{equation*} 
	\begin{defn} \label{Sob_RN_def}
		Let $s\in\R$, $1<p<\infty$. We define $H^{s,p}(\R^N)$ to be the fractional Sobolev space consisting of all (complex-valued) tempered distributions $f\in\Ss'(\R^N)$ such that $(I-\Delta)^\frac s2f\in L^p(\R^N)$, with norm 
		\[ 
		\|f\|_{H^{s,p}(\R^N)} := \|(I-\Delta)^\frac s2f\|_{L^p(\R^N)},
		\]
		where $(I-\Delta)^\frac s2 f$ is given by 
		\[ 
		(I - \Del)^\frac s2 f=((1+4\pi|\xi|^2)^\frac s2 \hht f(\xi))^\vee.
		\]
		Here for a Schwartz function $g$ we set the Fourier transform $\hht g(\xi)=\int_{\R^N} g(x)e^{-2\pi i x \cdot \xi}dx$, and the definition extends naturally to tempered distributions.
\end{defn}
	
\begin{rem}
The Sobolev space $H^{s,p} (\R^N)$ defined above is sometimes called the Bessel potential space. 
There is another type of commonly-used fractional Sobolev spaces called \emph{Sobolev-Slobodeckij spaces}, which is also known as the Besov spaces $\Bs_{pp}^s(\R^N)$, see \cite{HitchhikerFractionalSobolev} for example. We will not use this type of space in our paper with the exception of $B^s_{\infty, \infty}$ which agrees with the H\"older- Zygmund space $\La^s$. 
\end{rem}

We also have the following definition for functions and distributions defined on open sets of $\R^N$: 
\begin{defn}\label{Sob_dom_def} 
Let $\Om \seq \R^N$ be an open set. 
\begin{enumerate}[(i)]
        \item Define $\Ss' (\Om) = \{\tilde f|_{\Om}:\tilde f\in \Ss' (\R^N) \}$. 
        \item For $s \in \R$ and $1 < p < \infty$, define $H^{s,p}(\Om) = \{\tilde f|_{\Om}:\tilde f\in H^{s,p} (\R^N)\}$ with norm
\[
\| f \|_{H^{s,p}(\Om)} = \inf_{\wti{f}|_{\Om} = f} \|\tilde f\|_{H^{s,p} (\R^N)}.  
\] 
\item\label{Item::Sob_dom_def::Domain}  For $s \in \R$ and $1 < p < \infty$, define $H^{s,p}_0 (\Om)$ to be the subspace of $H^{s,p} (\R^N) $ which is the completion of $C^{\infty}_c (\Om)$ under the norm $\| \cdot \|_{H^{s,p} (\R^N)}$. We will write $\|g \|_{H_0^{s,p}(\Omega)}=\|g \|_{H^{s,p}(\R^N)}$ if $g \in H^{s,p}_0(\Om).$
\end{enumerate}
\end{defn} 

\begin{rem}
   In our paper $H_0^{s,p}(\Omega)$ is defined to be the closed subspace of $H^{s,p}(\R^N)$, which is different from some other literature. For example in \cite[Definition 1.95(ii)]{Tr06} Triebel defines the space $\mathring H^{s,p}(\Omega) := \overline{C_c^\infty(\Omega)}^{\|\cdot\|_{H^{s,p}(\Omega)}}$, which is a subspace of $H^{s,p}(\Omega)$.
   
   Nevertheless, when $s > \frac{1}{p}-1$ and $\Om$ is a bounded Lipschitz domain.  
   we have $\mathring H^{s,p}(\Omega)= H^{s,p}_0(\Om)$, in the sense that the natural map $H_0^{s,p}(\Omega)\to \mathring H^{s,p}(\Omega)$ induced by the restriction map $[\tilde f\mapsto\tilde  f|_\Omega]:H^{s,p}(\R^N)\to H^{s,p}(\Omega)$ is a bijection, see equation \eqref{Eqn::Space0Eqv} below. 
\end{rem} 
\begin{rem} \label{Rmk::Hsp=Fp2s} 
  For $1<p<\infty$ and $s\in\R$, the Bessel-Sobolev space $H^{s,p}(\R^N)$ is in fact a special case of the Triebel-Lizorkin space $\Fs_{p2}^s(\R^N)$ with equivalent norm. See \cite[Definition 2.3.1/2 and Theorem 2.5.6(i)]{Tr83}. More precisely we have the following:
\end{rem}

\begin{lemma}[Littlewood-Paley Theorem]\label{Lem::LittlewoodPaleyThm}
   Let $\phi_0\in\Ss(\R^N)$ be a Schwartz function whose Fourier transform satisfies
   \begin{equation*}
       \supp\widehat\phi_0 \seq \{|\xi|<2\},\qquad\widehat\phi_0|_{\{|\xi|\le1\}}\equiv1,\qquad0\le\widehat\phi_0\le 1.
   \end{equation*} 
   For $j\ge1$, let $\phi_j$ be the Schwartz function whose Fourier transform is  
  $\hht{\phi}_0(2^{-j} \xi) - \hht{\phi}_0(2^{-(j-1)} \xi)$.   
  Then for $s\in\R$ and $1<p<\infty$, there exists a $C=C_{\phi_0,p,s}>0$ such that
\begin{equation}\label{Eqn::LPNorm}
C^{-1}\| f \|_{H^{s,p}(\R^N)} 
\leq\bigg(\int_{\R^N}\Big(\sum_{j=0}^\infty2^{2js}|\phi_j\ast f(x)|^2\Big)^\frac p2dx\bigg)^\frac1p
\leq C \|f\|_{H^{s,p}(\R^N)},\quad\forall f\in\Ss'(\R^N),
\end{equation}
provided that either term in the inequality is finite.
\end{lemma}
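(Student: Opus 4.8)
The statement is a classical Littlewood--Paley characterization, and the plan is to obtain it by recognizing the middle expression in \eqref{Eqn::LPNorm} as a realization of the Triebel--Lizorkin quasi-norm $\|f\|_{\Fs_{p2}^s(\R^N)}$ and then quoting \rrem{Rmk::Hsp=Fp2s}. The first step is to check that $\{\widehat\phi_j\}_{j\ge0}$ is an admissible dyadic resolution of unity. Telescoping gives $\sum_{j=0}^J\widehat\phi_j(\xi)=\widehat\phi_0(2^{-J}\xi)$, which equals $1$ for every fixed $\xi$ once $2^{-J}|\xi|\le1$; hence $\sum_{j\ge0}\widehat\phi_j\equiv1$ pointwise. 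From the hypotheses on $\widehat\phi_0$ one reads off $\supp\widehat\phi_0\subseteq\{|\xi|<2\}$, $\supp\widehat\phi_j\subseteq\{2^{j-1}\le|\xi|<2^{j+1}\}$ for $j\ge1$, and the dyadic scaling $\widehat\phi_j(\xi)=\widehat\phi_1(2^{1-j}\xi)$ with $\widehat\phi_1(\xi)=\widehat\phi_0(\xi/2)-\widehat\phi_0(\xi)$ — all by direct substitution. These are exactly the structural conditions in the definition of $\Fs_{p2}^s(\R^N)$ used in \cite[Definition 2.3.1/2]{Tr83} (the precise radii $1$ and $2$ are immaterial), so the middle term of \eqref{Eqn::LPNorm} is, by definition, $\|f\|_{\Fs_{p2}^s(\R^N)}$ computed with respect to this particular system.

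Next I would invoke the standard fact that the Triebel--Lizorkin quasi-norm does not depend, up to a constant depending only on the two systems involved — hence here only on $\phi_0$, $p$, $s$ — on the choice of admissible dyadic resolution of unity; this is the well-definedness statement accompanying \cite[Definition 2.3.1/2]{Tr83}. Combined with \rrem{Rmk::Hsp=Fp2s} (equivalently \cite[Theorem 2.5.6(i)]{Tr83}), which asserts $\|f\|_{\Fs_{p2}^s(\R^N)}\approx\|f\|_{H^{s,p}(\R^N)}$ for $1<p<\infty$ and $s\in\R$, this produces both inequalities of \eqref{Eqn::LPNorm} with a single $C=C_{\phi_0,p,s}$. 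The clause ``provided that either term is finite'' is then automatic: reading all quantities in $[0,\infty]$, the chain of equivalences shows that the square-function integral is finite precisely when $f\in H^{s,p}(\R^N)$, and then the two are comparable.

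The only substantive ingredient being borrowed is the independence of the resolution of unity, and this is where the restriction $1<p<\infty$ enters: a self-contained proof would write $2^{js}(\phi_j\ast f)=\widetilde\phi_j\ast\big((I-\Delta)^{s/2}f\big)$ up to a family of multipliers satisfying uniform Mikhlin--H\"ormander bounds — where $\widehat{\widetilde\phi}_j$ is a fattened dyadic rescaling equal to $1$ on $\supp\widehat\phi_j$ — and then combine the Fefferman--Stein vector-valued maximal inequality on $L^p(\ell^2)$ with the classical square-function equivalence $\|g\|_{L^p}\approx\big\|(\sum_j|\widetilde\phi_j\ast g|^2)^{1/2}\big\|_{L^p}$, the reverse direction following from a Calder\'on reproducing formula together with duality. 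I expect this interchange of admissible systems (rather than the support/scaling bookkeeping of the first step or the identification in the second) to be the only place where any genuine harmonic analysis happens, which is exactly why I would lean on \cite{Tr83} instead of reproducing it.
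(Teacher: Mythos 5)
Your proposal is correct and follows exactly the route the paper takes: the paper gives no independent proof of this lemma, but rather (in Remark \ref{Rmk::Hsp=Fp2s} and the sentence following the lemma) identifies the middle term of \eqref{Eqn::LPNorm} as the Triebel--Lizorkin norm $\|f\|_{\Fs_{p2}^s(\R^N;\phi)}$ for an admissible dyadic system and cites \cite[Definition 2.3.1/2 and Theorem 2.5.6(i)]{Tr83} for the equivalence with $H^{s,p}(\R^N)$. Your verification of the support, telescoping, and scaling conditions, together with the independence of the resolution of unity, is precisely the bookkeeping implicit in that citation.
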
 
	
Note that following the notation from \cite[Section 2.3.1]{Tr83}, we denote the middle term in \eqref{Eqn::LPNorm} by $\|f\|_{\Fs_{p2}^s(\R^N;\phi)}$, which is a Triebel-Lizorkin norm on $\R^N$.
	
	By way of Definition \ref{Sob_dom_def}, one can also define for an arbitrary open set $\Omega \seq \R^N$ the space $\Fs_{p2}^s(\Omega)=\{\tilde f|_\Omega:\tilde f\in \Fs_{p2}^s(\R^N)\}$ equipped with the norm  $\|f\|_{\Fs_{p2}^s(\Omega)}=\inf\limits_{\tilde f|_\Omega=f}\|\tilde f\|_{\Fs_{p2}^s(\R^N)}$ (see \cite[Definition 1.95(i)] {Tr06}. It follows that $H^{s,p}(\Omega)=\Fs_{p2}^s(\Omega)$.

	% Therefore we have $H^{s,p}(\Omega)=\Fs_{p2}^s(\Omega)$ with
	% 	\begin{equation}\label{Eqn::Hsp=Fp2sOnDomain}
	% 	\|f\|_{H^{s,p}(\Omega)}=\inf\limits_{\tilde f|_\Omega=f}\|\tilde f\|_{H^{s,p}(\R^n)}\approx_{\Omega,s,p}\inf\limits_{\tilde f|_\Omega=f}\|\tilde f\|_{\Fs_{p2}^s(\R^n)}=\|f\|_{\Fs_{p2}^s(\R^n)},\quad\forall f\in \Ss'(\R^n),
	% 	\end{equation}
	% 	provided that either term is finite.
	
In the special case that $s$ is a non-negative integer and $1<p<\infty$, $H^{s,p}$ becomes the familiar Sobolev space $W^{k,p}$. 
\begin{lemma}\label{Lem::Hkp=Wkp}
Let $k\in\N$ and $1<p<\infty$. Then
	\begin{enumerate}[(i)]
		\item \label{Item::Hkp=Wkp::Global} $H^{k,p}(\R^N)=W^{k,p}(\R^N)$ with equivalent norm.
		\item\label{Item::Hkp=Wkp::Local}Let $\Omega$ be a bounded Lipschitz domain in $\R^N$. Then $W^{k,p}(\Omega)=H^{k,p}(\Omega)$ where the norms are equivalent.
	\end{enumerate}
\end{lemma}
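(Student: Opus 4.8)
The plan is to derive both parts from the Mikhlin--H\"ormander Fourier multiplier theorem, supplemented for part (ii) by the existence of a bounded linear Sobolev extension operator on a bounded Lipschitz domain.

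For part (i) I would first establish the inclusion $H^{k,p}(\R^N)\subseteq W^{k,p}(\R^N)$ by observing that for each multi-index $\alpha$ with $|\alpha|\le k$ the function $m_\alpha(\xi):=(2\pi i\xi)^\alpha(1+4\pi^2|\xi|^2)^{-k/2}$ is smooth on $\R^N$, bounded (since $|\alpha|\le k$), and satisfies $|\partial_\xi^\beta m_\alpha(\xi)|\lesssim_\beta(1+|\xi|)^{-|\beta|}$ for every $\beta$; hence $m_\alpha(D)$ is bounded on $L^p(\R^N)$ for $1<p<\infty$, and since $D^\alpha f=m_\alpha(D)\big[(I-\Delta)^{k/2}f\big]$ this gives $\|D^\alpha f\|_{L^p}\lesssim\|f\|_{H^{k,p}}$. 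For the reverse inclusion I would use the multinomial theorem to write, with positive constants $c_\alpha$,
\[
(1+4\pi^2|\xi|^2)^k=\sum_{|\alpha|\le k}c_\alpha\,\xi^{2\alpha},\qquad\text{so that}\qquad (1+4\pi^2|\xi|^2)^{k/2}=\sum_{|\alpha|\le k}c_\alpha\,\xi^\alpha\, n_\alpha(\xi),
\]
where $n_\alpha(\xi):=\xi^\alpha(1+4\pi^2|\xi|^2)^{-k/2}$ is again a Mikhlin multiplier; applying each $n_\alpha(D)$ to $D^\alpha f$ (up to the constant $(2\pi i)^{-|\alpha|}$ relating the multiplier $\xi^\alpha$ to $D^\alpha$) and summing would yield $\|f\|_{H^{k,p}}=\|(I-\Delta)^{k/2}f\|_{L^p}\lesssim\sum_{|\alpha|\le k}\|D^\alpha f\|_{L^p}$.

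For part (ii) the inclusion $H^{k,p}(\Omega)\subseteq W^{k,p}(\Omega)$, with norm bound, would be immediate from part (i): writing $f=\tilde f|_\Omega$ with $\tilde f\in H^{k,p}(\R^N)$, restriction gives $\|f\|_{W^{k,p}(\Omega)}\le\|\tilde f\|_{W^{k,p}(\R^N)}\approx\|\tilde f\|_{H^{k,p}(\R^N)}$, and I would take the infimum over extensions $\tilde f$. For the converse I would fix a bounded linear extension operator $E\colon W^{k,p}(\Omega)\to W^{k,p}(\R^N)$ with $Ef|_\Omega=f$ --- such an $E$ exists on any bounded Lipschitz domain, and here it can be built from Rychkov's universal extension operator $E_\omega$ on the special Lipschitz boundary pieces (Section 3) glued by a partition of unity, or one may simply quote Stein's extension theorem --- and then conclude via part (i) that $\|f\|_{H^{k,p}(\Omega)}\le\|Ef\|_{H^{k,p}(\R^N)}\approx\|Ef\|_{W^{k,p}(\R^N)}\lesssim\|f\|_{W^{k,p}(\Omega)}$.

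The only substantive inputs are the Mikhlin--H\"ormander multiplier theorem and the boundedness of a Sobolev extension operator on bounded Lipschitz domains; once these are granted, the rest is routine bookkeeping with Fourier multipliers. The point I expect to need the most care is that for odd $k$ the operator $(I-\Delta)^{k/2}$ is not a differential operator, so the identification of $H^{k,p}$ with $W^{k,p}$ cannot be carried out by a bare computation and genuinely requires the multiplier theorem --- this is the (mild) crux of the argument.
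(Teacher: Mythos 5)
Your proposal is correct. Note, though, that the paper does not actually write out a proof of this lemma: it simply cites Triebel, namely \cite[Theorem 2.5.6(ii)]{Tr83} for part (i) and \cite[Theorem 1.222(i)]{Tr06} for part (ii), remarking only that the key to (ii) is an extension operator $E:W^{k,p}(\Omega)\to W^{k,p}(\R^N)$. Your argument is therefore a self-contained replacement rather than a restatement: for (i) you give the classical Mikhlin--H\"ormander proof directly (both multiplier families $m_\alpha$ and $n_\alpha$ are indeed Mikhlin multipliers, and the multinomial identity $(1+4\pi^2|\xi|^2)^k=\sum_{|\alpha|\le k}c_\alpha\xi^{2\alpha}$ with $c_\alpha>0$ is exactly what is needed to reverse the inclusion), whereas Triebel's route goes through the identification $H^{s,p}=\Fs^s_{p2}$ and a lifting property for Triebel--Lizorkin spaces; for (ii) both routes reduce to the existence of a bounded $W^{k,p}$ extension operator on a bounded Lipschitz domain, and your observation that Rychkov's operator from Section 3 (glued by a partition of unity) or Stein's extension theorem serves this purpose is exactly right. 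The one point worth stating explicitly in a write-up is that the identities $D^\alpha f=m_\alpha(D)[(I-\Delta)^{k/2}f]$ and $(I-\Delta)^{k/2}f=\sum_\alpha c_\alpha(2\pi i)^{-|\alpha|}n_\alpha(D)D^\alpha f$ are first read off on the Fourier side as equalities of tempered distributions, so that membership in $L^p$ of the left-hand sides genuinely follows from $L^p$-boundedness of the multiplier operators; with that said, the argument is complete.
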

\begin{proof}
The proof of \ref{Item::Hkp=Wkp::Global}  can be found in \cite[Theorem 2.5.6(ii)]{Tr83}. 
		
For \ref{Item::Hkp=Wkp::Local}, see \cite[Theorem 1.222(i)]{Tr06}. Notice that we have $H^{k,p}(\Omega)=\Fs_{p2}^k(\Omega)$ as discussed above.
\end{proof}

\begin{rem}
As explained in the proof of \cite[Theorem 1.222(i)]{Tr06}, the key to the proof of  \ref{Item::Hkp=Wkp::Local} is the use of a extension operator $E:W^{k,p}(\Omega)\to W^{k,p}(\R^N)$.
In our paper we need to use a different extension operator that have some nicer properties.
\end{rem}
	
\begin{prop} \label{Prop::DualSpace} 
Let $\Omega\seq\R^N$ be a bounded Lipschitz domain. Suppose $1<p<\infty$ and $s\in\R$. Then we have the following equalities of spaces, where the norms are equivalent.
\begin{enumerate}[(i)]
	\item\label{Item::DualSpace::RN}  $H^{s,p}(\R^N)=H^{-s,p'}(\R^N)'$, where $p'=\frac p{p-1}$.
 \item \label{H_0space} 
 For $s > \frac{1}{p}-1$, $H^{s,p}_0( \Om) 
 = \{f \in H^{s,p}(\R^N): f|_{\overline\Omega^c}  = 0 \}$.  
	\item\label{Item::DualSpace::H0Char} $H^{s,p}_0(\Omega)
	=H^{-s,p'}(\Omega)'$ and $H^{-s,p'}(\Omega)=H^{s,p}_0(\Omega)'
	$, provided that $s > \frac{1}{p}-1$.
	%\item\label{Item::DualSpace::Omega}  $=\{f\in H^{s,p}(\R^N):\supp f\subseteq\overline\Omega\}$, provided that $s>\frac1p-1$. 
	%\item\label{Item::DualSpace::H0Char} $H^{s,p}_0(\Omega)=H^{-s,p'}(\Omega)'\{f\in H^{s,p}(\R^N):\supp f\subseteq\overline\Omega\}$, provided that $s>\frac1p-1$.
	\end{enumerate}
\end{prop}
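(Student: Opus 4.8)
The plan is to establish the three items in order, deriving the later ones from the earlier ones together with standard facts about Bessel potentials and Lipschitz-domain extension. For \ref{Item::DualSpace::RN}, the duality $H^{s,p}(\R^N)=H^{-s,p'}(\R^N)'$ is classical: the Bessel potential $(I-\Delta)^{s/2}$ is an isometric isomorphism $H^{s,p}(\R^N)\to L^p(\R^N)$, the Fourier multiplier $(I-\Delta)^{-s}$ pairs $H^{s,p}$ with $H^{-s,p'}$ via the $L^p$--$L^{p'}$ pairing, and one invokes $L^p(\R^N)'=L^{p'}(\R^N)$ for $1<p<\infty$ together with the density of $\Ss(\R^N)$. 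I would simply cite \cite[Theorem 2.6.1]{Tr83} (or an equivalent reference) rather than reproduce the argument, since the identification of $\Fs^s_{p2}$ duals is standard and we already have Remark~\ref{Rmk::Hsp=Fp2s} at our disposal.

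For \ref{H_0space}, write $H^{s,p}_{\{\overline\Omega^c\}}:=\{f\in H^{s,p}(\R^N):f|_{\overline\Omega^c}=0\}$; the inclusion $H^{s,p}_0(\Omega)\subseteq H^{s,p}_{\{\overline\Omega^c\}}$ is immediate because restriction to the open set $\overline\Omega^c$ is continuous on $H^{s,p}(\R^N)$ and kills $C^\infty_c(\Omega)$, which is dense in $H^{s,p}_0(\Omega)$ by definition. The reverse inclusion is the substantive point: given $f$ supported in $\overline\Omega$ with $f\in H^{s,p}(\R^N)$ and $s>\frac1p-1$, one must approximate $f$ in the $H^{s,p}(\R^N)$-norm by functions in $C^\infty_c(\Omega)$. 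The standard route, valid for bounded Lipschitz domains, is: first translate/dilate $f$ slightly along an inward transversal direction near $b\Omega$ (using a finite partition of unity subordinate to the boundary charts of Definition~\ref{Defn::SpecialDomain}, where the special Lipschitz structure guarantees a uniform inward cone) so that the translated functions are supported in compact subsets of $\Omega$ and converge to $f$ in $H^{s,p}(\R^N)$; then mollify. The condition $s>\frac1p-1$ is exactly what is needed for this translation-plus-mollification scheme to converge — this is where the hypothesis is used, and it is the classical result of Triebel on ``spaces on domains'' (cf.\ \cite[Theorem 1.4.2.4]{Tr78} or \cite[Section 1.11.6, 4.3.2]{Tr06}). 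I would cite this and record the resulting bijection; combined with Definition~\ref{Sob_dom_def}\ref{Item::Sob_dom_def::Domain}, this gives the displayed identity \eqref{Eqn::Space0Eqv} referenced above.

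For \ref{Item::DualSpace::H0Char}, I would deduce both duality statements from \ref{Item::DualSpace::RN} and \ref{H_0space} via the abstract fact that, for a closed subspace $Y\subseteq X$ of a reflexive Banach space, $Y'=X'/Y^\perp$ and $(X/Y)'=Y^\perp$. Concretely: $H^{s,p}(\Omega)=H^{s,p}(\R^N)/H^{s,p}_{\{\overline\Omega^c\}}$ by Definition~\ref{Sob_dom_def} (the quotient norm is exactly the infimum norm), so by \ref{Item::DualSpace::RN} its dual is the annihilator of $H^{s,p}_{\{\overline\Omega^c\}}$ inside $H^{-s,p'}(\R^N)$; one then checks that, under the $L^2$ pairing, this annihilator is precisely $\{g\in H^{-s,p'}(\R^N):g|_{\overline\Omega^c}=0\}$ when the complementary index satisfies the mirrored threshold $-s>\frac1{p'}-1$, i.e.\ $s<\frac1p+1$ — here one must be slightly careful and instead apply \ref{H_0space} with the roles of $(s,p)$ and $(-s,p')$, using that $H^{s,p}_0(\Omega)$ sits inside $H^{s,p}(\R^N)$ as the functions vanishing on $\overline\Omega^c$. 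Running the same argument with $(s,p)$ and $(-s,p')$ swapped, and invoking reflexivity of $H^{s,p}(\R^N)$ (which follows from \ref{Item::DualSpace::RN}), yields $H^{-s,p'}(\Omega)=H^{s,p}_0(\Omega)'$. The one genuine obstacle is bookkeeping the index conditions: \ref{H_0space} requires $s>\frac1p-1$, but to identify $H^{-s,p'}_0(\Omega)$ with vanishing-on-$\overline\Omega^c$ functions one needs $-s>\frac1{p'}-1$, and these two are not simultaneously satisfiable in general — so I expect the clean statement to use \ref{H_0space} only once, for the space appearing on the "$0$" side, and to characterize the other dual purely through the quotient/annihilator duality without a second appeal to \ref{H_0space}. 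Verifying that the annihilator of $H^{s,p}_{\{\overline\Omega^c\}}$ really is $H^{-s,p'}_0(\Omega)$ — rather than some larger space of distributions merely vanishing when tested against functions supported in $\Omega$ — is the step that needs the most care, and is where I would spend the bulk of a full write-up, again leaning on the bounded-Lipschitz hypothesis through the extension operator to be introduced in Section~3.
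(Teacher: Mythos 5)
Your items \ref{Item::DualSpace::RN} and \ref{H_0space} are fine and follow essentially the paper's route: (i) is the classical Bessel-potential duality, and (ii) is the density of $C_c^\infty(\Omega)$ in $\{f\in H^{s,p}(\R^N):\supp f\subseteq\overline\Omega\}$ for $s>\frac1p-1$ on a bounded Lipschitz domain, which the paper also obtains by citing Triebel (\cite[Proposition 3.1 and Theorem 3.5(i)]{TriebelLipschitzDomain}) rather than re-proving the translation-plus-mollification argument.

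Your treatment of \ref{Item::DualSpace::H0Char}, however, has a genuine gap, and in fact starts from a wrong identification. You write $H^{s,p}(\Omega)=H^{s,p}(\R^N)/H^{s,p}_{\{\overline\Omega^c\}}$, but the kernel of the restriction map $f\mapsto f|_\Omega$ is $\{f\in H^{s,p}(\R^N):f|_\Omega=0\}$, i.e.\ the distributions supported in the \emph{closed complement} $\R^N\setminus\Omega$ — not $\{f:f|_{\overline\Omega^c}=0\}$, which is the distributions supported in $\overline\Omega$. With the correct kernel, the quotient/annihilator duality gives $H^{-s,p'}(\Omega)'=\{g\in H^{-s,p'}(\R^N):g|_\Omega=0\}^\perp\subseteq H^{s,p}(\R^N)$, and the substantive step is to identify this annihilator with $\{f:f|_{\overline\Omega^c}=0\}=H^{s,p}_0(\Omega)$. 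That identification requires $C_c^\infty(\R^N\setminus\overline\Omega)$ to be dense in $\{g\in H^{-s,p'}(\R^N):g|_\Omega=0\}$, which is the density statement of \ref{H_0space} applied to the \emph{exterior} domain at the dual index, hence needs $-s>\frac1{p'}-1$, i.e.\ $s<\frac1p$. This is precisely the index clash you flag yourself, and it is not resolved by "using \ref{H_0space} only once": the annihilator computation itself needs the exterior density, so your argument only covers $\frac1p-1<s<\frac1p$ and not the full range $s>\frac1p-1$ claimed (which in this paper is used with $s$ large, e.g.\ $s=k\in\N$ in Lemma \ref{C3_int} and Proposition \ref{H-L_neg}). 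The paper avoids this entirely by quoting Triebel's duality theorem \cite[Definition 3.3 and (43)]{TriebelLipschitzDomain}, which establishes $H^{-s,p'}(\Omega)=\widetilde H^{s,p}(\Omega)'$ and $\widetilde H^{s,p}(\Omega)=H^{-s,p'}(\Omega)'$ for all $s>\frac1p-1$ by a different (and more delicate) argument; combined with the identification $\widetilde H^{s,p}(\Omega)=H^{s,p}_0(\Omega)$ from \ref{H_0space}, this yields \ref{Item::DualSpace::H0Char}. To repair your write-up you should either restrict to $|s|<\frac1p$ plus a separate lifting argument, or simply cite the Triebel duality as the paper does.
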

\begin{proof} 
For proof of \ref{Item::DualSpace::RN} see \cite[Theorem 2.6.1(a)]{Tr95}. 

The proof of \ref{H_0space} and \ref{Item::DualSpace::H0Char} are the combination of several results in \cite{TriebelLipschitzDomain}. We now offer some explanations.
Recall in Remark \ref{Rmk::Hsp=Fp2s} we can use $H^{s,p}=\Fs_{p2}^s$ for $s\in\R$ and $1<p<\infty$. 

In \cite[Section 3.2]{TriebelLipschitzDomain}, Triebel defines $\widetilde H^{s,p}(\overline{\Omega})\subseteq H^{s,p}(\R^N)$ and $\widetilde H^{s,p}(\Omega)\subseteq H^{s,p}(\Omega)$  as
\begin{equation}\label{Eqn::HtildeSpace}
    \widetilde H^{s,p}(\overline\Omega):=\{f\in H^{s,p}(\R^N):\supp f\subseteq\overline\Omega\},\quad \widetilde H^{s,p}(\Omega):=\{f|_\Omega:f\in\widetilde H^{s,p}(\overline{\Omega})\}.
\end{equation}
So $\{f\in H^{s,p}(\R^N):f|_{\overline\Om^c}=0\}=\widetilde H^{s,p}(\overline\Omega)$.

Clearly $\widetilde H^{s,p}(\overline\Omega)$ (resp. $\widetilde H^{s,p}(\Omega)$) is a closed subspace of $H^{s,p}(\R^N)$ (resp. $H^{s,p}(\Omega)$), and we have a surjective restriction map $[f\mapsto f|_\Omega]:\widetilde H^{s,p}(\overline\Omega)\to \widetilde H^{s,p}(\Omega)$.

When $s>\frac1p-1$, by \cite[Proposition 3.1]{TriebelLipschitzDomain} we have $\widetilde H^{s,p}(\overline\Omega)=\widetilde H^{s,p}(\Omega)$ in the sense that the restriction map $f\mapsto f|_\Omega$ is bijective.  

Recall that by definition $H_0^{s,p}(\Omega)=\overline{C_c^\infty(\Omega)}^{\|\cdot\|_{H^{s,p}(\R^N)}}$ is a closed subspace of $H^{s,p}(\R^N)$. Also observe that $H_0^{s,p}(\Omega)\subseteq\widetilde H^{s,p}(\overline{\Omega})$, since if $f=\lim_{j\to\infty} f_j$ and $\supp f_j\seq\Omega$, then $\supp f\subseteq\overline\Omega$. Thus $H_0^{s,p}(\Omega)=\overline{C_c^\infty(\Omega)}^{\|\cdot\|_{\widetilde H^{s,p}(\overline{\Omega})}}$.

By \cite[Theorem 3.5(i)]{TriebelLipschitzDomain}, for $s>\frac1p-1$, $C_c^\infty(\Omega)$ is dense in $\widetilde H^{s,p}(\Omega)$. Hence by using the identification $\widetilde H^{s,p}(\overline{\Omega})=\widetilde H^{s,p}(\Omega)$, we get for $s>\frac1p-1$,
\begin{equation}\label{Eqn::Space0Eqv}
    H_0^{s,p}(\Omega) =\overline{C_c^\infty(\Omega)}^{\|\cdot\|_{\widetilde H^{s,p}(\overline{\Omega})}}=\overline{C_c^\infty(\Omega)}^{\|\cdot\|_{\widetilde H^{s,p}(\Omega)}}=\widetilde H^{s,p}(\Omega).
\end{equation}
Using $\widetilde H^{s,p}(\overline\Omega)=\widetilde H^{s,p}(\Omega)$ we obtain $H_0^{s,p}(\Omega)=\{f\in H^{s,p}(\R^N):f|_{\overline\Om^c}=0\}$, which proves \ref{H_0space}. 

By \cite[Definition 3.3 and (43)]{TriebelLipschitzDomain}, we have duality $H^{-s,p'}(\Omega)=\wti H^{s,p}(\Omega)'$ and $\wti H^{s,p}(\Omega)=H^{-s,p'}(\Omega)'$ when $s >\frac1p-1$, where the norms are equivalent. Using the identification $\widetilde H^{s,p}(\Omega)=H^{s,p}_0(\Omega)$, we get $H^{s,p}_0(\Omega)=H^{-s,p'}(\Omega)'$ and $H^{-s,p'}(\Omega)= H^{s,p}_0(\Omega)'$ for the given range of $s$, proving \ref{Item::DualSpace::H0Char}. 
\end{proof}

%In the paper we also deduce $\frac12$-estimate for H\"older-Zygmund spaces, and more generally, for Besov spaces. We introduce them as below.

We also need some interpolations results. 
\begin{defn}
	Let $X_0,X_1$ be two Banach spaces that belong to a larger ambient space. For $0<\theta<1$. The  \emph{interpolation space $[X_0,X_1]_\theta$} is defined to be the space consisting of all $f(\theta)\in X_0+X_1$, where $f:\{z\in\C:0\leq \RE z\leq 1\}\to X_0+X_1$ is a continuous map that is analytic in the interior, such that $f(it)\in X_0$ and $f(1+it)\in X_1$ for all $t\in\R$. The norm is given by
	\[  
	\|u\|_{[X_0,X_1]_\theta}=\inf_{f} \{\sup\limits_{t \in\R}(\|f(it)\|_{X_0}+\|f(1+it)\|_{X_1}):u=f(\theta)\}. 
	\]  
\end{defn}

\begin{prop}[Complex interpolation theorem] \label{cinterpol} 
	Let $X_0,X_1,Y_0,Y_1$ be Banach spaces that belong to some larger ambient spaces, and Suppose $T: X_0+X_1\to Y_0+Y_1$ is a linear operator such that for each $i=0,1$,  
	$\|Tu\|_{Y_i}\leq C_0\|u\|_{X_i}$ for all $u\in X_i$. Then $T:[X_0,X_1]_\theta \to [Y_0,Y_1]_\theta$ is bounded linear with $\|Tu\|_{[Y_0,Y_1]_\theta}\leq C_0^{1-\theta}C_1^\theta\|u\|_{[X_0,X_1]_\theta}$ for all $u\in[X_0,X_1]_\theta$.
\end{prop}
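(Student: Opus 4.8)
The plan is to argue directly from the definition of the complex interpolation space, following Calderón's classical argument. Fix $u\in[X_0,X_1]_\theta$ and let $f:\{z\in\C:0\le\RE z\le1\}\to X_0+X_1$ be \emph{any} admissible function for $u$, i.e.\ $f$ is continuous, analytic in the interior, $f(it)\in X_0$ and $f(1+it)\in X_1$ for all $t\in\R$, and $f(\theta)=u$. First I would record the elementary observation that the hypothesis $\|Tv\|_{Y_i}\le C_i\|v\|_{X_i}$ forces $T$ to be bounded as a map $X_0+X_1\to Y_0+Y_1$ (with norm at most $\max(C_0,C_1)$), simply by writing $v=v_0+v_1$ and taking the infimum over decompositions. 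In particular $T$ is continuous, hence $g:=T\circ f$ is again continuous on the closed strip and analytic in its interior, and $g(\theta)=Tu$, $g(it)=T(f(it))\in Y_0$, $g(1+it)=T(f(1+it))\in Y_1$. Thus $g$ is an admissible competitor for $Tu$ in the definition of $\|\cdot\|_{[Y_0,Y_1]_\theta}$.

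The naive bounds $\|g(it)\|_{Y_0}\le C_0\|f(it)\|_{X_0}$ and $\|g(1+it)\|_{Y_1}\le C_1\|f(1+it)\|_{X_1}$ only yield the constant $\max(C_0,C_1)$; to extract the geometric mean I would insert a scaling factor. Assuming first $C_0,C_1>0$, set $\mu:=C_0/C_1$ and consider $G(z):=\mu^{z-\theta}g(z)=T(\mu^{z-\theta}f(z))$, which is still admissible for $Tu$ since $\mu^{z-\theta}$ is entire and nonvanishing; note $|\mu^{z-\theta}|=\mu^{\RE z-\theta}$, so on the two boundary lines
\[
\|G(it)\|_{Y_0}\le \mu^{-\theta}C_0\|f(it)\|_{X_0}=C_0^{1-\theta}C_1^\theta\|f(it)\|_{X_0},\qquad \|G(1+it)\|_{Y_1}\le \mu^{1-\theta}C_1\|f(1+it)\|_{X_1}=C_0^{1-\theta}C_1^\theta\|f(1+it)\|_{X_1}.
\]
Hence $\sup_t\big(\|G(it)\|_{Y_0}+\|G(1+it)\|_{Y_1}\big)\le C_0^{1-\theta}C_1^\theta\sup_t\big(\|f(it)\|_{X_0}+\|f(1+it)\|_{X_1}\big)$, and therefore $\|Tu\|_{[Y_0,Y_1]_\theta}\le C_0^{1-\theta}C_1^\theta\sup_t\big(\|f(it)\|_{X_0}+\|f(1+it)\|_{X_1}\big)$. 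Taking the infimum over all admissible $f$ for $u$ gives $\|Tu\|_{[Y_0,Y_1]_\theta}\le C_0^{1-\theta}C_1^\theta\|u\|_{[X_0,X_1]_\theta}$; the degenerate case where some $C_i=0$ is handled by replacing $C_i$ with $C_i+\eps$ and letting $\eps\to0^+$. Linearity of $T$ on $[X_0,X_1]_\theta$ is immediate, completing the argument.

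The only genuinely delicate point is verifying that $G(z)=T(\mu^{z-\theta}f(z))$ is a legitimate competitor: one must know that composing a Banach-space-valued analytic (resp.\ continuous) function with the bounded operator $T$, and multiplying by the scalar entire function $\mu^{z-\theta}$, preserves analyticity in the interior and continuity up to the closed strip, and that the boundary values still lie in $Y_0$ resp.\ $Y_1$. Both facts are routine once $T$ is seen to be bounded $X_0+X_1\to Y_0+Y_1$, which is why I would establish that boundedness at the very start; the remainder is bookkeeping with the definition of the interpolation norm.
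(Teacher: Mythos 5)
Your argument is correct: it is the classical Calder\'on proof, and every step checks out — the boundedness of $T:X_0+X_1\to Y_0+Y_1$ justifies that $T\circ f$ is admissible, and the rescaling by $\mu^{z-\theta}$ with $\mu=C_0/C_1$ correctly produces the geometric-mean constant $C_0^{1-\theta}C_1^\theta$ on both boundary lines. The paper gives no proof of its own (it only cites \cite[Theorem 1.9.3(a)]{Tr95}), and your argument is precisely the standard one behind that reference, so there is nothing to compare beyond noting that you have supplied the proof the paper omits.
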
 
	See \cite[Theorem 1.9.3(a) and Definition 1.2.2/2]{Tr95}.
	
We also have the following facts: 
\begin{prop}\label{interpol_space}
Let $\Omega$ be an open set of $\R^N$. Let $1<p<\infty$ and $s_0,s_1\in\R$. Denote $\delta(x)=\dist(x,b\Omega)$ and set $s_\theta=\theta s_1+(1-\theta)s_0$ for $0<\theta<1$. Then the following hold: 
\begin{enumerate}[(i)] 
		\item\label{Item::cinterpolsp::WeiLp} $[L^p(\Omega,\delta^{s_0}),L^p(\Omega,\delta^{s_1})]_\theta = L^p(\Omega,\delta^{s_\theta})$.
		\item\label{Item::cinterpolsp::Hsp}  $[H^{s_0,p}(\Om),H^{s_1,p}(\Om)]_\theta=H^{s_\theta,p}(\Om)$, provided that $\Om$ is a bounded Lipschitz domain. 
	\end{enumerate}
\end{prop}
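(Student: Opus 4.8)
The plan is to prove part \ref{Item::cinterpolsp::WeiLp} by reducing the weighted $L^p$ spaces to ordinary $L^p$ spaces via multiplication by a power of $\delta$, and then to deduce part \ref{Item::cinterpolsp::Hsp} from the standard interpolation theory of Triebel--Lizorkin spaces on $\R^N$ combined with the universal extension operator.

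For \ref{Item::cinterpolsp::WeiLp}, I would first record the classical fact (see \cite[Theorem 1.18.5]{Tr95}) that for a measure space $(\Omega,\mu)$ and two positive weights $w_0,w_1$ one has $[L^p(w_0\,d\mu),L^p(w_1\,d\mu)]_\theta=L^p(w_0^{1-\theta}w_1^\theta\,d\mu)$ with equal norms. Applying this with $w_i=\delta^{p s_i}$ gives $w_0^{1-\theta}w_1^{\theta}=\delta^{p s_\theta}$, which is exactly the claim; alternatively, one can prove it directly by observing that the map $M_\alpha:f\mapsto\delta^{\alpha}f$ is an isometry $L^p(\Omega,\delta^{\sigma})\to L^p(\Omega,\delta^{\sigma-\alpha})$ for every real $\alpha,\sigma$, so that $M_{s_0}$ carries $L^p(\Omega,\delta^{s_0})$ isometrically onto $L^p(\Omega)$ and $L^p(\Omega,\delta^{s_1})$ isometrically onto $L^p(\Omega,\delta^{s_1-s_0})$; one then uses the weighted interpolation identity for the pair $(L^p(\Omega),L^p(\Omega,\delta^{s_1-s_0}))$, which is the special case $s_0=0$, and transports it back by $M_{-s_0}$. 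Either route is routine; the continuity and positivity of $\delta$ on $\Omega$ is all that is needed to make $M_\alpha$ well defined on $L^p_{\loc}$.

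For \ref{Item::cinterpolsp::Hsp}, the starting point is the interpolation identity on the whole space, $[H^{s_0,p}(\R^N),H^{s_1,p}(\R^N)]_\theta=H^{s_\theta,p}(\R^N)$, which follows from $H^{s,p}=\Fs_{p2}^s$ (Remark \ref{Rmk::Hsp=Fp2s}) and the complex interpolation theorem for Triebel--Lizorkin spaces \cite[Theorem 2.4.7]{Tr95}. To pass to the bounded Lipschitz domain $\Omega$ I would use the standard retraction/coretraction argument: the restriction map $R:\tilde f\mapsto\tilde f|_\Omega$ is bounded $H^{s,p}(\R^N)\to H^{s,p}(\Omega)$ for every $s$ (by Definition \ref{Sob_dom_def}), and there is an extension operator $E$ that is bounded $H^{s,p}(\Omega)\to H^{s,p}(\R^N)$ simultaneously for $s=s_0,s_1$ with $RE=\mathrm{id}$ — here one invokes Rychkov's universal extension operator $E_\omega$ recalled in Section 3, patched together with a partition of unity on the bounded Lipschitz domain, which is bounded on the whole Triebel--Lizorkin scale. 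Given such a retraction pair, Proposition \ref{cinterpol} applied to $E$ and to $R$ yields $[H^{s_0,p}(\Omega),H^{s_1,p}(\Omega)]_\theta=R\bigl([H^{s_0,p}(\R^N),H^{s_1,p}(\R^N)]_\theta\bigr)=R\,H^{s_\theta,p}(\R^N)=H^{s_\theta,p}(\Omega)$, with equivalence of norms; the elementary lemma that $[R(X_0),R(X_1)]_\theta=R([X_0,X_1]_\theta)$ whenever $R$ admits a common bounded right inverse is the only extra ingredient and is immediate from the definitions.

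The main obstacle is the existence of a single extension operator bounded on $H^{s,p}(\Omega)$ for both endpoint values $s_0,s_1$ at once — one cannot interpolate with a Stein-type operator that depends on the smoothness index. This is precisely why the universality of Rychkov's operator (boundedness on the entire $\Fs$-scale, including negative $s$) is essential, and it is the reason Section 3 is devoted to recalling it; once that operator is in hand the rest of \ref{Item::cinterpolsp::Hsp} is formal. A minor technical point to handle carefully is that $\Om$ being merely Lipschitz means one must localize with a finite cover and affine charts as in Definition \ref{Defn::SpecialDomain}, checking that the chart changes and the cutoffs act boundedly on $H^{s,p}(\R^N)$ for all $s$ in the relevant range, which is standard since these are smooth (indeed affine or $C_c^\infty$) operations.
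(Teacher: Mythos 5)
Your proposal is correct and follows the same route as the paper, which simply cites \cite[Theorem 1.18.5]{Tr95} for the weighted $L^p$ interpolation and \cite[Corollary 1.111]{Tr06} for the Sobolev case; your retraction/coretraction argument with Rychkov's universal extension operator is exactly the mechanism behind the latter citation, and your identification of the measure weights $w_i=\delta^{ps_i}$ giving $w_0^{1-\theta}w_1^{\theta}=\delta^{ps_\theta}$ is the correct bookkeeping for the former. The only cosmetic remark is that your ``alternative'' direct route for part (i) via the isometries $M_\alpha$ still reduces to the special case $s_0=0$ of the same classical identity, so it is a normalization rather than an independent proof.
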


The proof of \ref{Item::cinterpolsp::WeiLp} can be found in \cite[Theorem 1.18.5]{Tr95}. The proof of \ref{Item::cinterpolsp::Hsp} can be found in \cite[Corollary 1.111 (1.372)]{Tr06}. 
	
\section{The Universal Extension operator}
In this section we recall the construction of the universal extension operator by Rychkov \cite{Ry99}. None of the results here is new, although we shall present the proof in a way slightly different from Rychkov's.

In the rest of the paper we shall denote by $\Kb$ the positive cone in $\R^N$: 
\begin{equation*}
 	\Kb=\{(x',x_N):x_N>|x'|\}.
\end{equation*}
\end{defn}
\begin{rem}
   In many literature, for example \cite[Section 1.11.4 (1.322) p.~63]{Tr06}, the definition for a special Lipschitz domain only requires $\rho$ to be a Lipschitz function. In other words,  $\|\nabla\rho\|_{L^\infty(\R^{n-1};\R^{n-1})}$ is finite but can be arbitrary large.
   By taking invertible linear transformation we can make $\nabla\rho$ small in new coordinates. 
\end{rem}

	\begin{defn}\label{Defn::DyaRes}
		A \emph{regular dyadic resolution} is a sequence $\phi=(\phi_j)_{j=0}^\infty$ of Schwartz functions, denoted by $\phi\in\mf{D}$, such that
		\begin{itemize}
			\item $\int\phi_0=1$, $\int x^\alpha\phi_0(x)dx=0$ for all $\alpha\in\N^N\backslash\{0\}$.
			\item $\phi_j(x)=2^{nj}\phi_0(2^jx)-2^{n(j-1)}\phi_0(2^{j-1}x)$, for $j\ge 1$.
		\end{itemize}
		
		A \emph{generalized dyadic resolution} is a sequence $\psi=(\psi_j)_{j=0}^\infty$ of Schwartz functions, denoted by $\psi\in\mf{G }$, such that
		\begin{itemize}
			\item  $\int x^\alpha\psi_1(x)dx=0$ for all $\alpha\in\N^N$.
			\item $\psi_j(x)=2^{n(j-1)}\psi_1(2^{j-1}x)$, for $j\ge1$.
		\end{itemize}
	Here $\psi_0$ can be an arbitrary Schwartz function.
	\end{defn}
	
	\begin{lemma}[{\cite[Theorem 4.1(a)]{Ry99}}] \label{Lem::k_char}
There exists a function $g\in\Ss(\R)$ such that $\supp g\subseteq[1,\infty)$, $\int_\R g=1$ and $\int_\R t^kg(t)dt=0$ for all $k\in\Z^+$.
	\end{lemma}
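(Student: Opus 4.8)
The plan is to construct $g$ by hand as a Schwartz function on $\R$ supported in $[1,\infty)$ whose zeroth moment is $1$ and all of whose positive-order moments vanish. The natural approach is a Borel-type / Mittag-Leffler argument: start from any fixed nonnegative bump $h_0\in C_c^\infty$ with $\supp h_0\subseteq[1,2]$ and $\int h_0=1$, and then successively correct it. More precisely, I would look for $g$ of the form $g=\sum_{k=0}^\infty c_k\, h^{(k)}$ or $g=\sum_{k=0}^\infty c_k\,\partial^k(\phi_k)$, where differentiation is used to kill high moments: integrating by parts, $\int_\R t^j \partial^k\phi(t)\,dt=0$ whenever $j<k$, so adding a term $c_k\phi^{(k)}$ only affects moments of order $\ge k$ and leaves the lower ones untouched. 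This triangular structure lets one solve the moment equations $\int t^k g=\delta_{k0}$ recursively for the coefficients $c_k$.

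The cleanest way to package this is via the Fourier transform. The conditions ``$\int g=1$ and $\int t^k g=0$ for all $k\ge1$'' say exactly that $\widehat g(0)=1$ and $\widehat g^{(k)}(0)=0$ for all $k\ge1$, i.e. $\widehat g$ is identically equal to the constant function $1$ to infinite order at the origin. So first I would fix a Schwartz function $G_0$ with $\supp G_0\subseteq[1,\infty)$ and $\int G_0=1$ (e.g. translate and normalize a standard bump), and consider the task of modifying it by a Schwartz function $w$ with $\supp w\subseteq[1,\infty)$ so that all moments $\int t^k w$ are prescribed (namely $-\int t^k G_0$ for $k\ge1$, and $0$ for $k=0$). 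Equivalently, one needs: given any sequence $(a_k)_{k\ge0}$ of complex numbers, find $w\in\Ss(\R)$ with $\supp w\subseteq[1,\infty)$ and $\int t^k w(t)\,dt=a_k$ for all $k$. This is a Borel lemma statement on the Fourier side — prescribing the Taylor coefficients of $\widehat w$ at $0$ — with the extra support constraint.

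To get the support constraint I would not use a generic Borel extension but build $w$ explicitly as a rapidly convergent series of derivatives of a single bump. Let $\eta\in C_c^\infty(\R)$ with $\supp\eta\subseteq[1,2]$ and $\int\eta=1$, and set $w=\sum_{k\ge0} b_k\,\lambda_k^{-k}\,\eta_{\lambda_k}^{(k)}$ where $\eta_{\lambda}(t)=\lambda\,\eta(\lambda t)$ is an $L^1$-normalized dilate supported in $[1/\lambda,2/\lambda]\subseteq[1,\infty)$ for $\lambda\ge1$; a standard scaling computation gives $\int t^j \eta_\lambda^{(k)}(t)\,dt = (-1)^k \frac{j!}{(j-k)!}\lambda^{-(j-k)}\int t^{j-k}\eta$ for $j\ge k$ and $0$ for $j<k$, so the $k$-th term contributes only to moments of order $\ge k$, with its order-$k$ moment equal to $(-1)^k k!\,b_k\lambda_k^{-k}/\lambda_k^{0}$ times a nonzero constant. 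Choosing the dilation parameters $\lambda_k\to\infty$ fast enough relative to $\|\eta^{(k)}\|$ and the $b_k$ makes the series converge in $\Ss(\R)$; and the coefficients $b_k$ are then solved recursively from the triangular moment system so that $\int t^k w=a_k$. Finally set $g=G_0+w$, which lies in $\Ss(\R)$, is supported in $[1,\infty)$, and has $\int g=1$, $\int t^k g=0$ for $k\ge1$.

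The main obstacle is the simultaneous control of three things: Schwartz convergence of the correcting series, the compact/half-line support (which forbids the easiest Borel constructions and forces the explicit dilated-bump ansatz), and the triangular solvability of the infinite moment system. The support is handled for free because every term is supported in $[1,\infty)$; the triangular system is solvable because each new derivative order introduces one new free parameter affecting one new moment; so the real work is choosing the dilation parameters $\lambda_k$ large enough that $\sum b_k\lambda_k^{-k}\|\eta_{\lambda_k}^{(k)}\|_{\Ss_M}<\infty$ for every Schwartz seminorm $\|\cdot\|_{\Ss_M}$ simultaneously — a routine but slightly delicate diagonal estimate. I expect this to be exactly the kind of step the authors dispatch with ``a standard argument'' or a citation to Rychkov, so in the write-up I would either cite \cite{Ry99} directly or give the dilated-bump series with a one-line convergence remark.
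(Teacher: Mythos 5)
Your overall strategy --- correct the moments of a fixed bump by adding a series of derivatives of dilated bumps, exploiting the triangular structure $\int t^j\partial^k\phi=0$ for $j<k$ and a diagonal choice of dilation parameters --- is a legitimate and essentially standard route to this lemma, and it is genuinely different from the paper's proof. However, as written your construction has a concrete error in the dilation direction that breaks both of the constraints you are trying to satisfy. With $\supp\eta\subseteq[1,2]$ and $\eta_\lambda(t)=\lambda\eta(\lambda t)$, the support of $\eta_\lambda$ is $[1/\lambda,2/\lambda]$, which is contained in $[1,\infty)$ only when $\lambda\le 1$; for $\lambda>1$ it spills into $(0,1)$, so taking $\lambda_k\to\infty$ destroys the support condition $\supp g\subseteq[1,\infty)$ outright. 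The convergence estimate goes the wrong way as well: since $\eta_\lambda^{(k)}=\lambda^{k+1}\eta^{(k)}(\lambda\cdot)$ and the order-$k$ moment of $c_k\eta_{\lambda_k}^{(k)}$ equals $(-1)^k k!\,c_k$ \emph{independently} of $\lambda_k$ (so $c_k=b_k\lambda_k^{-k}$ is pinned down by the triangular system and cannot absorb the growth), the sup-norm of the $k$-th term is $\asymp|c_k|\lambda_k^{k+1}\|\eta^{(k)}\|_\infty$, which blows up as $\lambda_k\to\infty$. The fix is to invert the direction: take $0<\lambda_k\le1$ with $\lambda_k\to0$, so the supports $[1/\lambda_k,2/\lambda_k]$ march off to $+\infty$ inside $[1,\infty)$, and for each fixed Schwartz seminorm $\sup_t|t|^M|w^{(L)}(t)|$ the $k$-th term contributes $\asymp|c_k|\lambda_k^{k+L+1-M}\|\eta^{(k+L)}\|_\infty$, which can be made $\le 2^{-k}$ for all $M,L\le k$ by choosing $\lambda_k$ small \emph{after} $c_k$ has been determined from the moment equations at stage $k$. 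With that correction your argument goes through.

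For comparison, the paper's proof is a one-shot complex-analytic construction: it sets $G(z)=\exp(-(z-1)^{1/8}-(z-1)^{-1/8})$ on $\C\setminus[1,\infty)$ and takes $g$ to be a normalization of the boundary jump $(G(t+i0)-G(t-i0))/t$ across the cut; the support in $[1,\infty)$ and the flatness at $t=1$ are immediate from the formula, and \emph{all} the moment identities follow simultaneously from Cauchy's theorem applied to $\int_{\Gamma_\ve}z^kG(z)\,dz$ on a keyhole contour, with no recursion and no convergence argument. Your route is more elementary (no contour integration) but pays for it with the infinite triangular system and the diagonal seminorm estimate; the paper's route requires checking smoothness and rapid decay of the branch jump but delivers the moment conditions for free.
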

	\begin{proof}
		Define
		\begin{gather*}
		G(z):=\exp(-(z-1)^\frac18-(z-1)^{-\frac18}),\quad z \in\C\backslash[1,\infty).  
		\end{gather*}
Here we use $(z-1)^\frac18=|z-1|^{\frac18\operatorname{arg}(z-1)}$ with  $0 <\operatorname{arg}(z-1)<2\pi$.  It is easy to check that the two branches $G(t+i0)$ and $G(t-i0)$ are both smooth functions which are flat at $t=1$. 
		
		For $0<\ve <\frac12$, take an oriented loop $\Gamma_\ve \seq\C$ with 
		\begin{align*}
		\Gm_\ve &= \{t+i0:1+\eps\leq t\leq \ve^{-1}\}  \cup \{\ve ^{-1}e^{i\theta}:0\leq\theta\leq 2\pi\}
		\\ &\quad \cup\{-t-i0:- \ve^{-1}\leq t \leq -1-\eps\}\cup\{1+ \ve e^{-i\theta}:-2\pi\leq \theta\leq 0\}. 
		\end{align*}  
		By Cauchy's theorem,
		\begin{align} \label{con_int}  
		\frac1{2\pi i}\int_0^\infty t^k(G(t+i0)-G(t-i0))dt
		& =\lim\limits_{\ve \to0^+}\frac1{2\pi i}\int_{\Gamma_\ve}z^kG(z)dz 
		\\ \nn &= 
		\begin{cases}
		G(0)\neq0,&k=-1  \\0, & k\geq 0. 
		\end{cases}  
		\end{align}
		Define 
		\[
		g(t):= \frac{1}{(2 \pi i )G(0) }\frac{G(t+i0)-G(t-i0)}{t},\quad t\in\R.  
		\]
Then $g \equiv 0$ on $(-\infty,1)$. Also $g$ vanishes to infinite order at both $t=\infty$ and $ t=1$. In view of \eqref{con_int}, we have
\[ 
	\int_0^\infty g(t) \, dt =1, \quad \int_0^\infty t^kg(t)dt=0, \quad \forall \:  k\in\Z^+.\qedhere 
\]
\end{proof}
\begin{lemma}[{\cite[Proposition 2.1]{Ry99}}]\label{Lem::k_dyadic} 
Recall $-\Kb=\{x_N<-|x'|\}$ and let $\Df,\Gf$ be given in Definition \ref{Defn::DyaRes}.
\begin{enumerate}[(i)]
	\item\label{Item::k_dyadic::Phi} There is a $\phi=(\phi_j)_{j=0}^\infty\in\mf{D}$ on $\R^N$ such that $\supp\phi_j\subseteq-\Kb\cap\{x_N<-2^{-j}\}$ for all $j\in\N$.		\item\label{Item::k_dyadic::Psi} For any $\phi=(\phi_j)$ satisfying  \ref{Item::k_dyadic::Phi}, there is a $\psi=(\psi_j)_{j=0}^\infty\in\mf{G}$ such that $\supp\psi_j\subseteq-\Kb\cap\{x_N<-2^{-j}\}$ for all $j\in\N$ and $f=\sum_{j=0}^\infty\psi_j\ast\phi_j\ast f$ for all $f\in\Ss'(\R^N)$.
		\end{enumerate}
	\end{lemma}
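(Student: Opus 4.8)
The plan is the following. For part~\ref{Item::k_dyadic::Phi} I would build $\phi_0$ out of the one-dimensional function $g$ from Lemma~\ref{Lem::k_char} and then shear it into the downward cone. Put $g^\ast(y):=\prod_{i=1}^{N}g(y_i)$, a Schwartz function on $\R^N$ with $\supp g^\ast\subseteq[1,\infty)^N$, $\int g^\ast=1$, and $\int y^\alpha g^\ast(y)\,dy=\prod_i\int y_i^{\alpha_i}g(y_i)\,dy_i=0$ for every $\alpha\in\N^N\setminus\{0\}$ (equivalently, $\widehat{g^\ast}-1$ vanishes to infinite order at $0$). Choose linearly independent vectors $v_1,\dots,v_N$ lying well inside the open cone $\{x_N<-|x'|\}$ with $\sum_i(v_i)_N$ sufficiently negative --- for instance $v_i=\tfrac14 e_i-e_N$ for $i<N$ and $v_N=-e_N$, with $e_1,\dots,e_N$ the standard basis of $\R^N$ --- and let $L$ be the matrix with columns $v_1,\dots,v_N$. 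Since $-\Kb$ is a convex cone closed under addition and under positive dilations, $L([1,\infty)^N)=L\mathbf{1}+L(\R^N_{\ge 0})\subseteq-\Kb\cap\{x_N<-1\}$. Now set $\phi_0(x):=|\det L|^{-1}g^\ast(L^{-1}x)$; then $\int\phi_0=1$, and $\int x^\alpha\phi_0(x)\,dx=\int (Ly)^\alpha g^\ast(y)\,dy=0$ for $|\alpha|\ge1$ (the integrand pairs a nonconstant homogeneous polynomial in $y$ against $g^\ast$), while $\supp\phi_0=L(\supp g^\ast)\subseteq-\Kb\cap\{x_N<-1\}$. Defining $\phi_j$ for $j\ge1$ by the dilation rule of Definition~\ref{Defn::DyaRes} gives $\phi\in\Df$; and since $\supp\phi_0(2^k\cdot)=2^{-k}\supp\phi_0\subseteq-\Kb\cap\{x_N<-2^{-k}\}$ and $\{x_N<-2^{-(j-1)}\}\subseteq\{x_N<-2^{-j}\}$, we obtain $\supp\phi_j\subseteq-\Kb\cap\{x_N<-2^{-j}\}$ for every $j$.

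For part~\ref{Item::k_dyadic::Psi}, write $a:=\widehat{\phi_0}$, so that $a(0)=1$, $a-1$ is flat at $0$, and $\widehat{\phi_j}(\xi)=a(2^{-j}\xi)-a(2^{-(j-1)}\xi)$ for $j\ge1$. The key idea is to produce the dual family $\psi$ by an explicit formula, avoiding any division on the Fourier side (which would ruin the cone support). I would take $\psi_0:=3\phi_0-2\,\phi_0\ast\phi_0$ and, for $j\ge1$, $\psi_j:=2^{N(j-1)}\psi_1(2^{j-1}\cdot)$, where $\psi_1:=3(\Phi_1+\phi_0)-2(\Phi_1\ast\Phi_1+\Phi_1\ast\phi_0+\phi_0\ast\phi_0)$ and $\Phi_1:=2^{N}\phi_0(2\cdot)$; on the Fourier side this reads $\widehat{\psi_0}=3a-2a^2$ and, for $j\ge1$, $\widehat{\psi_j}(\xi)=3(s+t)-2(s^2+st+t^2)$ with $s=a(2^{-j}\xi)$ and $t=a(2^{-(j-1)}\xi)$. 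Everything here is Schwartz. Since $-\Kb$ is a convex cone closed under addition, a convolution of functions supported in $-\Kb\cap\{x_N<-c\}$ and $-\Kb\cap\{x_N<-c'\}$ is supported in $-\Kb\cap\{x_N<-c-c'\}$; keeping track of the heights (and taking the $v_i$ inside a strictly narrower cone, so that the convolutions remain strictly inside $-\Kb$) yields $\supp\psi_0\subseteq-\Kb\cap\{x_N<-1\}$ and $\supp\psi_1\subseteq-\Kb\cap\{x_N<-\tfrac12\}$, hence $\supp\psi_j\subseteq-\Kb\cap\{x_N<-2^{-j}\}$ for all $j\in\N$. To check $\psi\in\Gf$: substituting $a(\eta)=1+\alpha(\eta)$ with $\alpha$ flat at $0$ into the formula for $\widehat{\psi_1}$ and expanding gives $\widehat{\psi_1}(\eta)=-3\bigl(\alpha(\eta/2)+\alpha(\eta)\bigr)-2\bigl(\alpha(\eta/2)^2+\alpha(\eta/2)\alpha(\eta)+\alpha(\eta)^2\bigr)$, which vanishes to infinite order at $0$; thus all moments of $\psi_1$ vanish, while the $\psi_j$ dilation structure and the freedom of $\psi_0$ hold by construction.

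To finish, I would verify $f=\sum_{j\ge0}\psi_j\ast\phi_j\ast f$ as follows. Set $P(s):=3s^2-2s^3-1$, so $P(1)=0$. For $j\ge1$, with $s,t$ as above, the algebraic identity $(s-t)\bigl(3(s+t)-2(s^2+st+t^2)\bigr)=(3s^2-2s^3)-(3t^2-2t^3)$ gives $\widehat{\psi_j}(\xi)\widehat{\phi_j}(\xi)=P\bigl(a(2^{-j}\xi)\bigr)-P\bigl(a(2^{-(j-1)}\xi)\bigr)$, whereas $\widehat{\psi_0}\widehat{\phi_0}=(3a-2a^2)\,a=P(a)+1$. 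Hence the partial sums telescope,
\[
\sum_{j=0}^{J}\widehat{\psi_j}(\xi)\widehat{\phi_j}(\xi)=1+P\bigl(a(2^{-J}\xi)\bigr)\xrightarrow{J\to\infty}1+P(1)=1,
\]
for every $\xi\in\R^N$. The multipliers $m_J:=1+P(a(2^{-J}\cdot))$ are bounded with derivatives bounded uniformly in $J$ and tend pointwise to $1$, so the associated Fourier multiplier operators converge to the identity on $\Ss'(\R^N)$; that is, $\sum_{j=0}^{J}\psi_j\ast\phi_j\ast f=(m_J\widehat f)^{\vee}\to f$ in $\Ss'$ for every $f$, which is the assertion.

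The only genuine difficulty is part~\ref{Item::k_dyadic::Psi}: the synthesis family must at once carry the rigid $\Gf$-structure with \emph{all} moments of $\psi_1$ vanishing, stay supported in the downward cone, and invert $\{\phi_j\}$ exactly, whereas the naive Fourier-side inversion $\widehat{\psi_j}=\widehat{\phi_j}\big/\sum_k\widehat{\phi_k}^2$ achieves none of these (the $\widehat{\phi_j}$ are merely Schwartz, not band-limited, and have zeros). The device that meets all three constraints simultaneously is the cubic $P(s)=3s^2-2s^3-1$, which has a double zero at $s=1$ and for which $P(s)+1$ is divisible by $s^2$; its difference quotient then furnishes the \emph{exact} factorization $\widehat{\psi_j}\widehat{\phi_j}=P(a(2^{-j}\xi))-P(a(2^{-(j-1)}\xi))$, so the reproducing formula reduces to a division-free telescoping sum in which every term is a polynomial in dyadic dilates of $\phi_0$, hence Schwartz and cone-supported. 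By contrast, part~\ref{Item::k_dyadic::Phi} is routine once one observes that a Schwartz function with infinitely many vanishing moments cannot be compactly supported, which is precisely why one starts from the half-line-supported $g$ of Lemma~\ref{Lem::k_char} rather than a compactly supported bump.
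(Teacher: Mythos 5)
Your proof is correct and follows essentially the same route as the paper: part (i) is the identical construction (a tensor product of copies of $g$ composed with a linear shear into $-\Kb$), and part (ii), like the paper's, realizes the $\psi_j$ as cone-supported polynomial expressions in dyadic dilates of $\phi_0$ and their self-convolutions, chosen so that $\sum_j\widehat{\psi_j}\,\widehat{\phi_j}$ telescopes to $1$, with convergence in $\Ss'$ then following from the standard approximate-identity argument. The only difference is the algebraic device: you use the cubic $Q(s)=3s^2-2s^3$ and the factorization $Q(s)-Q(t)=(s-t)\bigl(3(s+t)-2(s^2+st+t^2)\bigr)$, whereas the paper expands $1=\bigl(\sum_j\widehat{\rho_j}\bigr)^2$ with $\rho_0=\phi_0\ast\phi_0$, which amounts to the same telescoping with the quartic $Q(s)=2s^2-s^4$; both choices have the two properties that matter (a double zero of $Q-1$ at $s=1$, so that $\psi_1$ has all moments vanishing, and divisibility of $Q$ by $s$, so that $\widehat{\psi_0}$ is a polynomial in $\widehat{\phi_0}$ without constant term), so both yield valid $\Kb$-dyadic pairs.
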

	\begin{defn}\label{Note::k_dyadic}
	We call $(\phi,\psi)=(\phi_j,\psi_j)_{j=0}^\infty$ with above-mentioned properties a \emph{$\Kb$-dyadic pair}.
	\end{defn}
	\begin{proof}[Proof of Lemma \ref{Lem::k_dyadic}]  
		Let $g\in\Ss(\R)$ be as in Lemma \ref{Lem::k_char} which is supported in $[1,\infty)$. Take an invertible linear transformation $\Theta=(\theta_1,\dots,\theta_N):\R^N\to\R^N$ such that $\Theta^{-1}([1,\infty)^N)\subseteq-\Kb\cap\{x_N<-1\}$. Define
		\[
		\phi_0(x_1,\dots,x_N) = C_0 \,   g(\theta_1(x))\cdots g(\theta_N(x)),  
		\]
		where $C_0\neq0$ is the constant chosen so that $\int_{\R^N}\phi_0=1$, or $\hht{\phi}_0 (0) = 1$. Then $\phi_0\in\Ss(\R^N)$ satisfies $\supp\phi_0\seq\Theta^{-1}([1,\infty)^N)$.
		Moreover, $\phi_0$ satisfies $\int_{\R^N} x^{\all} \phi_0(x) \, dx = 0$ for all $|\all| >0$ since $\int t^kg(t)dt=0$ for all $k\in\Z^+$.
		
		Define $\phi_j (x)  =2^{Nj}\phi_0(2^j x) - 2^{N (j-1)}\phi_0 (2^{j-1} x )$ for $j\geq 1$, so then $\supp \phi_j \subseteq \{ x_N < -2 ^{-j} \} \cap -\Kb$.  This proves \ref{Item::k_dyadic::Phi}.

To prove \ref{Item::k_dyadic::Psi}, let 
\[
	\rho_0:=\phi_0\ast \phi_0\in\Ss(\R^N), \quad \rho_j (x): =2^{Nj}\rho_0 (2^j x )-2^{N(j-1)}\rho_0(2^{j-1} x), \quad j \geq 1. 
\]
Then $\supp \rho_0  \seq \supp \phi_0 + \supp \phi_0 \seq -\Kb \cap  \{ x_N < -2 \}$ and therefore
\[
    \supp \rho_j \seq \{ x_N \leq -2 \cdot 2^{- (j-1)} \} \cap -\Kb  = \{ x_N <  -2^{-j} \} \cap -\Kb.
\]
		
		So $\rho\in \Df$ satisfies $\supp\rho_j\seq-\Kb\cap\{x_N<-2^{-j}\}$ for all $j\ge0$ and $\hht\rho_j(\xi)= \hht \phi_j(\xi)(\hht \phi_0(2^{-j}\xi)+\hht \phi_0(2^{-( j-1)}\xi))$ for $j\ge1$. Therefore
		\begin{align*}
		1&=\sum_{j,k=0}^\infty\hht\rho_j(\xi)\hht \rho_k(\xi)
		\\ & =\sum_{j=0}^\infty \hht\rho_j(\xi)\Big(\hht \rho_j(\xi)+2\sum_{k=j+1}^\infty \hht\rho_k(\xi)\Big)
		\\ &=\sum_{j=0}^\infty \hht \rho_j(\xi)\big(\hht \rho_j(\xi)+2-2 \hht \rho_0(2^{-j}\xi)\big) 
		\\ &= \hht\rho_0(\xi)(2- \hht\rho_0(\xi))+\sum_{j=1}^\infty \hht\rho_j(\xi)(2- \hht \rho_0(2^{-j}\xi)- \hht \rho_0(2^{- (j-1)}\xi))
		\\ &= [\hht \phi_0(\xi)]^2(2- \hht\rho_0(\xi))+\sum_{j=1}^\infty \hht \phi_j(\xi)(\hht\phi_0(2^{-j}\xi) - \hht\phi_0(2^{-(j-1)}\xi))(2- \hht \rho_0(2^{-j}\xi)- \hht\rho_0(2^{-( j-1) }\xi).
		\end{align*}
		We can now define $\psi$ via its Fourier transform as 
		\begin{align*} 
		\textstyle\hht \psi_0(\xi)&:=2 \hht  \phi_0(\xi)- \hht  \phi_0(\xi)^3 
		\\ 
		\hht  \psi_j(\xi)&:=( \hht  \phi_0(2^{-j}\xi) -  \hht  \phi_0(2^{-(j-1)}\xi))(2 - \hht  \rho_0(2^{-j}\xi)- \hht  \rho_0(2^{-(j-1)}\xi)), \quad j\ge1.
		\end{align*}
		Then $\sum_{j=0}^{\infty} \hht{\phi}_j  \hht{\psi}_j= 1$. Note that $\hht{\psi}_j (\xi ) = \hht{\psi}_1 (2^{-(j-1) } \xi  )$ for $j\ge1$, and therefore 
		\[
		\psi_j (x) = 2^{N  (j-1) }  \psi_1   (2^{j-1} x),\quad j\ge1.  
		\]
		Also we have
		\[
		\psi_j (x) =  \left( 2^{Nj} \phi_0 (2^j x) - 2^{N (j-1)} \phi_0  (2^{j-1} x )  \right) \ast \left(  2 \del_0 - 2^{N j} \rho_0  (2^j x) - 2^{N (j-1)} \rho_0 (2^{j-1} x) \right),\quad j\ge1 .  
		\]
		Since $\supp \phi_0$ and $ \supp \rho_0$ are contained in $ -\Kb \cap \{ x_N < -1 \}$, we have $\supp \psi_j  \seq - \Kb \cap \{ x_N < -2^{-j} \}$. 
		Also we get $\hht \psi_1(\xi)=O(|\xi|^\infty)$ from $\hht  \phi_0(\xi)=1+O(|\xi|^\infty)$, which implies $\int x^\alpha\psi_1(x)dx=0$ for all $\all$ with $|\all | \geq 0$. 
	\end{proof} 
	
We can now define the universal extension operator, first for special Lipschitz domains, and then for bounded Lipschitz domains. 
	
\begin{defn}\label{ext_opt_def}
Let $(\phi,\psi)$ be a $\Kb$-dyadic pair, and let $\om$ be a special Lipschitz domain. The universal extension operator $E_{\om}$ associated with $(\phi,\psi)$ is defined by 
\begin{equation} \label{Eom_def}
	E_{\om} f:=\sum_{j=0}^\infty\psi_j\ast(\mb{1}_{\om}  \cdot(\phi_j\ast f)), 
	\end{equation}
where $\mb{1}_{\om}$ is the characteristic function on $\om$. 
		
\end{defn} 

Here by extension, we mean for any tempered distribution $f\in\Ss'(\omega)$, $(Ef)|_\omega=f$ as distributions on $\om$. Indeed since $\omega+\Kb=\omega$, we have $(\psi_j\ast(\1_\omega h))|_\omega=h|_\omega$ for $h\in L^1_\loc(\R^N)$. Thus
$$(E_\omega f)|_\omega=\sum_{j=0}^\infty(\psi_j\ast(\1_\omega(\phi_j\ast f)))|_\omega=\sum_{j=0}^\infty(\psi_j\ast\phi_j\ast f)|_\omega=f.$$ 
	
	% For a general bounded Lipschitz domain $\Om$, we can define the extension operator of $\Om$ via patches of special type domain. Take a finite open covering $\bigcup_{j=1}^M U_j \supset b\Om$ such that each $1\le j\le M$ we have an invertible affine linear transform $\Phi_j$ and a special type domain $\Om'_j$ such that $U_j \cap \Om:= U_j\cap\Phi_j(\Om'_j)$. For each $1\le j\le M$ we define $E_jf:=(E_{\Phi_j(\Om'_j)}[f\circ\Phi_j])\circ\Phi_j^{-1}$ where $E_{\Phi_j(\Om'_j)}$ is in \eqref{Eqn::UniExt::HalfPlane}.  Take $\chi_0\in C_c^\infty(\omega)$ and take $\chi_j\in C_c^\infty(U_j)$ for $1\le j\le M$ such that $\sum_{j=0}^M \chi_j^2|_\omega = 1$. We then define 
	% \begin{equation}\label{Eqn::UniExt::BddDomain}
	%     \Ec_{\Om} f  =\chi_0^2f+ \sum_{j=1}^M  \chi_j E_{\Om'_j} (\chi_j f ). 
	% \end{equation}

More generally for a bounded Lipschitz domain $\Om$, and $\U$ an open set containing $\overline{\Omega}$, we can use partition of unity to define extension operator $\Ec=\Ec_\Omega$ for $\Om$ such that $\supp \Ec\var\subset \U$ for all $\var$: Let $\{U_{\nu}\}_{\nu=0}^M$ be a finite open cover of $\overline{\Om}$, such that $U_{0} \subset\subset \Om$, $b \Om \seq \bigcup_{\nu=1}^M U_{\nu}$ and $\bigcup_{\nu=0}^MU_\nu\subseteq\U$.
Furthermore we may assume that for each $\nu$, there exists a special Lipschitz domain $\om_{\nu}$ and an invertible affine linear transformation $\Phi_{\nu}: \R^N \to \R^N$, such that $U_{\nu} = \Phi_{\nu} (\B^{N})$ and $U_{\nu} \cap \Phi_{\nu} (\om_{\nu}) = U_{\nu} \cap \Om $.

Choose $\chi_{\nu} \in C^{\infty}_c (U_{\nu})$ such that $\chi_0 + \sum_{\nu=1}^M \chi_{\nu}^2 \equiv 1$ in some neighborhood of $ \Om$. For a function $g$ defined on $\Om \cap U_{\nu}$, let $E_{\nu} g = E_{\om_{\nu}} (g \circ \Phi_{\nu}) \circ \Phi_{\nu}^{-1}$. 
 
\begin{equation} \label{E_def}
   \Ec f = \chi_0 f + \sum_{\nu = 1}^M \chi_{\nu} E_{\nu} (\chi_{\nu} f ). 
\end{equation}

\begin{prop} \label{Prop::ext_op}
	Let $\omega$ be a special Lipschitz domain, and $E_{\om}$ be given by \eqref{Eom_def}. 
	Then 
	\begin{enumerate}[(i)] 
	    \item\label{Item::ext_op::Sob} $E_{\omega}: H^{s,p}(\omega)\to H^{s,p}(\R^N)$ 
	is a bounded operator for all $s \in \R$ and $1<p<\infty$. 
	\item\label{Item::ext_op::Hold} $E_{\omega}: \La^s(\omega)\to \La^s(\R^N)$ 
	is a bounded operator for all $s>0$.
	\end{enumerate}
\end{prop}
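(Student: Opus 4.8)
\textbf{Proof proposal for Proposition \ref{Prop::ext_op}.}

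The plan is to use the $\Kb$-dyadic pair $(\phi,\psi)$ and the Littlewood--Paley characterization of $H^{s,p}$ and $\La^s$ to estimate $E_\omega f$ directly from its defining series \eqref{Eom_def}. First I would fix an arbitrary extension $\tilde f\in H^{s,p}(\R^N)$ of $f$ with $\|\tilde f\|_{H^{s,p}(\R^N)}\le 2\|f\|_{H^{s,p}(\omega)}$; since $(\phi_j\ast f)|_\omega=(\phi_j\ast\tilde f)|_\omega$ (because $\supp\phi_j\subseteq-\Kb$ and $\omega+\Kb=\omega$, so convolution only sees values of $f$ inside $\omega$), we may replace $f$ by $\tilde f$ in the series and work entirely on $\R^N$, reducing to the estimate $\|E_\omega\tilde f\|_{H^{s,p}(\R^N)}\lesssim\|\tilde f\|_{H^{s,p}(\R^N)}$. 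The heart of the argument is then a vector-valued maximal-function estimate: applying a Littlewood--Paley projector $\phi_k\ast$ to $E_\omega\tilde f=\sum_j\psi_j\ast(\1_\omega\cdot(\phi_j\ast\tilde f))$, one gets a double sum $\sum_j\phi_k\ast\psi_j\ast(\1_\omega\cdot(\phi_j\ast\tilde f))$, and because $\hat\psi_j$ is (essentially) supported where $\hat\phi_j$ lives, the kernels $\phi_k\ast\psi_j$ decay rapidly in $|j-k|$ — this is the almost-orthogonality that makes the sum behave like a single term.

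The key steps, in order: (1) record the almost-orthogonality estimate $|\phi_k\ast\psi_j(x)|\lesssim 2^{-|k-j|L}\, 2^{N\max(j,k)}(1+2^{\max(j,k)}|x|)^{-M}$ for every $L,M$, which follows from the moment conditions ($\int x^\alpha\phi_0=0$ for $|\alpha|>0$, $\int x^\alpha\psi_1=0$ for all $\alpha$) together with the Schwartz decay — this is standard Calderón--Zygmund-type bookkeeping; (2) deduce that $|\phi_k\ast\psi_j\ast g(x)|\lesssim 2^{-|k-j|L}\mathcal{M}(g)(x)$ where $\mathcal{M}$ is the Hardy--Littlewood maximal operator, applied with $g=\1_\omega\cdot(\phi_j\ast\tilde f)$, hence $|\phi_k\ast(E_\omega\tilde f)(x)|\lesssim\sum_j 2^{-|k-j|L}\mathcal{M}(\phi_j\ast\tilde f)(x)$; (3) insert the weights $2^{ks}$, use $2^{ks}\lesssim 2^{|k-j|\cdot|s|}2^{js}$ to move the weight onto index $j$, and apply Young's inequality on $\ell^2(\Z)$ in the $(k-j)$ variable (choosing $L>|s|$) to bound $\big(\sum_k 2^{2ks}|\phi_k\ast E_\omega\tilde f|^2\big)^{1/2}\lesssim\big(\sum_j 2^{2js}|\mathcal{M}(\phi_j\ast\tilde f)|^2\big)^{1/2}$ pointwise; (4) for part \ref{Item::ext_op::Sob}, take $L^p$ norms and invoke the Fefferman--Stein vector-valued maximal inequality to replace $\mathcal{M}(\phi_j\ast\tilde f)$ by $\phi_j\ast\tilde f$, then apply Lemma \ref{Lem::LittlewoodPaleyThm} twice to conclude $\|E_\omega\tilde f\|_{H^{s,p}}\lesssim\|\tilde f\|_{H^{s,p}}$; (5) for part \ref{Item::ext_op::Hold}, instead take the $\ell^\infty$-in-$k$ / $L^\infty$-in-$x$ norm with weight $2^{ks}$, note $\mathcal{M}(\phi_j\ast\tilde f)$ is harmless in $L^\infty$ (it is bounded by $\|\phi_j\ast\tilde f\|_{L^\infty}$ up to a constant), and use the $B^s_{\infty,\infty}=\La^s$ characterization together with $s>0$ to absorb the $j=0$ term — the positivity of $s$ is exactly what makes the geometric series $\sum_j 2^{-|k-j|L}2^{(j-k)s}\cdot 2^{js}\|\ldots\|$ summable against the Zygmund norm.

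The main obstacle I anticipate is step (2)–(3), namely making the almost-orthogonality genuinely quantitative near the ``diagonal'' indices $j\approx k$ and checking that the single-scale maximal bound $|\psi_j\ast h(x)|\lesssim\mathcal M h(x)$ survives the characteristic function $\1_\omega$ (it does, since $\1_\omega h$ is dominated pointwise by $|h|$, but one must be careful that this truncation is the whole point — it is what makes $E_\omega f$ depend only on $f|_\omega$ — so it cannot be discarded and its interaction with the convolution kernels must be tracked). A secondary subtlety is that Lemma \ref{Lem::LittlewoodPaleyThm} is stated for a specific Littlewood--Paley family built from $\widehat{\phi_0}$ with $\widehat{\phi_0}\equiv 1$ near the origin, whereas the $\phi_j$ in the $\Kb$-dyadic pair are the Rychkov ones; one resolves this by either noting that the Triebel--Lizorkin norm is independent of the admissible resolution (standard) or by sandwiching with an auxiliary standard resolution via one more almost-orthogonality estimate. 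For the Hölder--Zygmund case one also needs the analogous resolution-independence for $B^s_{\infty,\infty}$, which is again classical.
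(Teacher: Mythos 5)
The paper offers no proof of its own here—it simply cites Rychkov \cite[Theorem 4.1(b)]{Ry99}—and your argument is exactly Rychkov's: reduce to a near-optimal extension $\tilde f$ using the support condition $\supp\phi_j\subseteq-\Kb$ and $\omega+\Kb=\omega$, exploit almost-orthogonality of $\phi_k\ast\psi_j$ coming from the moment conditions, dominate by the Hardy--Littlewood maximal function, and conclude via Fefferman--Stein and resolution-independence of the Littlewood--Paley characterization (with the $\ell^\infty/L^\infty$ variant for $\La^s$). One small slip: your pointwise kernel bound in step (1) should be $2^{-|k-j|L}\,2^{N\min(j,k)}(1+2^{\min(j,k)}|x|)^{-M}$ rather than with $\max$, since the convolution spreads to the coarser scale; the maximal-function bound you deduce in step (2) is the correct consequence of the corrected estimate, so the argument is unaffected.
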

The reader can find the proof in \cite[Theorem 4.1(b)]{Ry99}. By partition of unity we see that $\Ec:H^{s,p}(\Omega)\to H^{s,p}(\R^N)$ and $\Ec:\La^s(\Omega)\to \La^s(\R^N)$ are also bounded operators.

There is also a useful ``Littlewood-Paley type" characterization of $H^{s,p}(\om)$. 
\begin{prop} \label{Prop::OmegaEqvNorm} 
Let $\om$ be a special Lipschitz domain and $\phi = (\phi_j)_{j=0}^{\infty}$ be constructed as in \rl{Lem::k_dyadic} \ref{Item::k_dyadic::Phi}. 
\begin{enumerate}[(i)]
    \item\label{Item::OmegaEqvNorm::Sob} For $s\in\R$ and $1<p<\infty$, $H^{s,p}(\omega)$ has equivalent norm
  \begin{equation*}
      \| f \|_{\Fs_{p2}^{s} (\om; \phi)} := \Big\|\Big(\sum_{j=0}^\infty 2^{2js} |\phi_j\ast f|^2\Big)^\frac12\Big\|_{L^p(\omega)}.
  \end{equation*} 
  \item\label{Item::OmegaEqvNorm::Hold} For $s>0$, the H\"older-Zygmund space $\La^s(\omega)$ has an equivalent norm %\footnote{Rychkov in \cite{Ry99} proved $\|f\|_{\La^s(\omega)}\approx\|f\|_{\Bs_{\infty\infty}^s(\omega;\phi)}$ where $\|f\|_{\Bs_{\infty\infty}^s(\omega;\phi)}:=\sup_{j\in\N}2^{js}\|\phi_j\ast f\|_{L^\infty(\omega)}$, but based on the identity $\Bs_{pp}^s=\Fs_{pp}^s$ for $0<p\le\infty$, we can use  $\|f\|_{\Fs_{\infty\infty}^s(\omega; \phi) }:=\|f\|_{\Bs_{\infty\infty}^s(\omega; \phi) }$.}  
 \begin{equation*}
 \|f\|_{\Bs_{\infty\infty}^s(\omega; \phi) }:=\sup_{j\in\N}2^{js}\|\phi_j\ast f\|_{L^\infty(\omega)}.
 \end{equation*}
\end{enumerate}
\end{prop}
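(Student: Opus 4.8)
The plan is to derive the "Littlewood-Paley type" characterization of $H^{s,p}(\om)$ (and $\La^s(\om)$) from the corresponding intrinsic characterization on $\R^N$ (Lemma \ref{Lem::LittlewoodPaleyThm}) by combining it with the mapping properties of the universal extension operator $E_\om$ from Proposition \ref{Prop::ext_op}. The key observation is that the sum $\sum_j \psi_j\ast(\1_\om\cdot(\phi_j\ast f))$ defining $E_\om f$ interacts nicely with a second Littlewood-Paley decomposition, so that the frequency-localized pieces $\phi_j\ast(E_\om f)$ restricted to $\om$ are controlled by, and control, the pieces $\1_\om\cdot(\phi_j\ast g)$ for any extension $g$ of $f$.

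First I would prove the easy inequality $\|f\|_{\Fs_{p2}^s(\om;\phi)}\lesssim\|f\|_{H^{s,p}(\om)}$. Given $f\in H^{s,p}(\om)$, pick any $\tilde f\in H^{s,p}(\R^N)$ with $\tilde f|_\om=f$ and $\|\tilde f\|_{H^{s,p}(\R^N)}\le 2\|f\|_{H^{s,p}(\om)}$. Since $\phi=(\phi_j)$ is a regular dyadic resolution (it lies in $\mf D$), it is in particular admissible for the Triebel-Lizorkin characterization, so $\big\|(\sum_j 2^{2js}|\phi_j\ast\tilde f|^2)^{1/2}\big\|_{L^p(\R^N)}\approx\|\tilde f\|_{H^{s,p}(\R^N)}$; here one should cite that the $\Fs_{p2}^s$ norm is independent of the choice of (regular) dyadic resolution, which is the content of the Littlewood-Paley theorem together with \cite[Section 2.3.1]{Tr83}. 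Restricting the integral from $\R^N$ to $\om$ only decreases it, and $\phi_j\ast\tilde f|_\om$ depends only on $f$ up to the usual caveat — actually $\phi_j\ast\tilde f$ itself depends on $\tilde f$ off $\om$, so more precisely one defines $\|f\|_{\Fs_{p2}^s(\om;\phi)}$ via the extension and notes the quantity $\big\|(\sum_j 2^{2js}|\phi_j\ast f|^2)^{1/2}\big\|_{L^p(\om)}$ is understood with $\phi_j\ast f$ meaning the convolution applied to the zero-extension is \emph{not} what is wanted; rather, since $\supp\phi_j\subseteq-\Kb$ and $\om+\Kb=\om$, one has $(\phi_j\ast h)|_\om$ depends only on $h|_\om$, so $\phi_j\ast f$ is well-defined on $\om$ for $f\in\Ss'(\om)$. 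This support property is exactly why the statement is clean, and it gives $\|f\|_{\Fs_{p2}^s(\om;\phi)}\le\|\tilde f\|_{\Fs_{p2}^s(\R^N;\phi)}\lesssim\|\tilde f\|_{H^{s,p}(\R^N)}\le 2\|f\|_{H^{s,p}(\om)}$.

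For the reverse inequality $\|f\|_{H^{s,p}(\om)}\lesssim\|f\|_{\Fs_{p2}^s(\om;\phi)}$, I would use $E_\om f$ as the extension realizing the infimum-norm: $\|f\|_{H^{s,p}(\om)}\le\|E_\om f\|_{H^{s,p}(\R^N)}$. Now I must bound $\|E_\om f\|_{H^{s,p}(\R^N)}\lesssim\|f\|_{\Fs_{p2}^s(\om;\phi)}$. Write $E_\om f=\sum_j\psi_j\ast g_j$ with $g_j=\1_\om\cdot(\phi_j\ast f)$, where $\phi_j\ast f$ is the intrinsically-defined convolution on $\om$ discussed above. Since $(\phi,\psi)$ is a $\Kb$-dyadic pair, $\psi=(\psi_j)$ is a generalized dyadic resolution with strong frequency-localization ($\hat\psi_j(\xi)=\hat\psi_1(2^{-(j-1)}\xi)$ and $\hat\psi_1$ flat at $0$), so one has the standard estimate (again from \cite{Ry99}, or from maximal-function / almost-orthogonality arguments): $\big\|\sum_j\psi_j\ast g_j\big\|_{H^{s,p}(\R^N)}\lesssim\big\|(\sum_j 2^{2js}|g_j|^2)^{1/2}\big\|_{L^p(\R^N)}$. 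But $\big\|(\sum_j 2^{2js}|g_j|^2)^{1/2}\big\|_{L^p(\R^N)}=\big\|(\sum_j 2^{2js}|\phi_j\ast f|^2)^{1/2}\big\|_{L^p(\om)}=\|f\|_{\Fs_{p2}^s(\om;\phi)}$ because $g_j$ is supported in $\om$ and equals $\phi_j\ast f$ there. This is essentially the "hard direction" inside the proof of Proposition \ref{Prop::ext_op}, so the cleanest route is to quote that the boundedness of $E_\om$ factors through precisely this vector-valued estimate; alternatively one cites \cite[Theorem 4.1]{Ry99} directly, where Rychkov proves both the extension bound and this characterization together.

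The H\"older-Zygmund case \ref{Item::OmegaEqvNorm::Hold} is parallel, replacing the $L^p(\ell^2)$ norm by the $\ell^\infty(L^\infty)$ norm $\sup_j 2^{js}\|\phi_j\ast f\|_{L^\infty}$, using that $\La^s=\Bs_{\infty\infty}^s$ and that the analogous sequence-space estimate $\big\|\sum_j\psi_j\ast g_j\big\|_{\Bs_{\infty\infty}^s}\lesssim\sup_j 2^{js}\|g_j\|_{L^\infty}$ holds; the "easy" direction again just restricts the sup over $\R^N$ to $\om$ and uses that $\phi_j\ast f$ is intrinsically defined on $\om$. The main obstacle is bookkeeping rather than depth: one has to be careful that "$\phi_j\ast f$ on $\om$" genuinely makes sense for $f\in\Ss'(\om)$ (which it does, by the cone-support property $\supp\phi_j\subseteq-\Kb$ and $\om+\Kb=\om$), and that the vector-valued bound for $\sum_j\psi_j\ast g_j$ is available in the generality needed; both are contained in Rychkov's paper, so in the write-up I would state the equivalence and refer to \cite[Theorem 4.1]{Ry99}, spelling out only the cone-support remark that makes the $\om$-intrinsic quantities well-defined.
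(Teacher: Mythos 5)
Your proposal is correct and follows essentially the same route as the paper, whose ``proof'' of this proposition is simply the citation of \cite[Theorems 3.2 and 4.1(b)]{Ry99}: your sketch reconstructs exactly the two halves of Rychkov's argument (restriction to $\om$ plus the cone-support property $\supp\phi_j\subseteq-\Kb$, $\om+\Kb=\om$ for the easy inequality, and the vector-valued bound for $\sum_j\psi_j\ast g_j$ applied to $E_\om f$ for the hard one). One caution on references: the equivalence $\|\tilde f\|_{\Fs^s_{p2}(\R^N;\phi)}\approx\|\tilde f\|_{H^{s,p}(\R^N)}$ for this particular $\phi$ is \emph{not} covered by Lemma \ref{Lem::LittlewoodPaleyThm} or \cite[Section 2.3.1]{Tr83}, since here $\widehat\phi_j$ is not compactly supported; one must instead invoke the ``local means'' characterization (\cite[Theorem 2.1]{Ry99}), which applies precisely because $\phi_0$ has all higher moments vanishing.
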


The proof is in \cite[Theorem 3.2]{Ry99}, where the assumption is $\phi_0\in C_c^\infty(-\Kb)$, but based on \cite[Theorem 4.1(b)]{Ry99}  same proof works for $\phi_0\in\Ss(-\Kb)$. % without any change.
		
%We are going to prove that $E_{\omega}$ has some smoothing effect in $\overline\omega^c$. The result can somehow recover Proposition \ref{Prop::ext_op} \ref{Item::ext_op::Bdd}. See Theorem ?? and Remark ??.

\section{Commutator Estimate}

The main result of this section is the following commutator estimate on special Lipschitz domains. 
We will write $D$ for the gradient operator, and $D^k = (D^{\all})_{|\alpha|=k}$. 

\th{commest_thm} 
Let $1< p < \infty$ and $s\in\R$, and let $\omega$ be a special Lipschitz domain. Suppose $(\phi,\psi)$ is a $\Kb$-dyadic pair and let $E_{\om}$ be defined as in Definition \ref{ext_opt_def}. Then there exists a constant $C_{p,s}>0$ such that for $\delta(x)=\dist(x,b\omega)$,
\begin{equation} \label{commest}
   \|\delta^{1-s}[D,E_{\om}] f\|_{L^p(\overline\omega^c) }\leq C\| f \|_{H^{s,p} (\om)},\quad\forall f \in\Ss'(\R^N).
\end{equation}
\eth 
\begin{rem} \label{commest_rem} 
	
\
\begin{enumerate}[(i)]
\item \label{commest_rem1}
By \rp{Prop::OmegaEqvNorm}, the $H^{s,p}(\om)$ norm is equivalent to the $\Fs^s_{p,2} (\om; \phi)$ norm. In fact we will prove the following stronger estimate: for $s\in\R$, $1\le p\le\infty$, 
\begin{equation}\label{Eqn::commest_rem::StrongEst}
    \|\delta^{1-s}[D,E_{\om}] f\|_{L^p(\overline\omega^c) }\leq C_{s,p,\phi}\| f \|_{\Fs^s_{p, \infty}(\om;\phi)},\quad\forall f \in\Ss'(\R^N),
\end{equation}
where
\[
    \|f\|_{\Fs^s_{p,\infty} (\omega;\phi)}:=\Big\|\sup_{j\in\N}2^{js}|\phi_j\ast f|\Big\|_{L^p(\omega)} \leq\Big\|\Big(\sum_{j=0}^\infty2^{2js}|\phi_j\ast f|^2\Big)^\frac12\Big\|_{L^p(\omega)}= \| f \|_{\Fs^s_{p,2}(\om; \phi)}. 
\]

\item \label{commest_rem2}

When $f \in H^{s,p} (\om)$ for $s > 1$, Theorem \ref{commest_thm} follows from Proposition \ref{C3}, which gives that
\eq{gwest} 
  \| \del^{1-s}  g  \|_{L^p(\overline\omega^c)} \leq C \|  g \|_{H^{s-1,p}(\R^N)}, \quad \text{for any $g \in H^{s-1,p}_0 (\overline\omega^c)$. }  
\eeq
This is because $[D, E_{\om}] \equiv 0$ in $\om$, and therefore $[D,E_{\om}]f \in H^{s-1,p}_0(\overline\om^c)$ by \rp{Prop::DualSpace} \ref{H_0space}. Letting $g = [D, E_{\om}] f$ in \re{gwest} we obtain \eqref{commest}. 

\item \label{commest_rem3}
When $s>\frac1p$ and $f\in H^{s,p}(\omega)$, Theorem \ref{commest_thm} implies that $[D,E_\omega]f \in L^1_\loc(\R^N)$. Indeed, since $[D,E_\omega] f$ is supported in $\om^c$, 
\begin{align*}
    \|[D,E_\omega] f\|_{L^1(B^N(0,R))}\le&\|\delta^{s-1}\|_{L^{p'}(B^N(0,R))}\|\delta^{1-s}[D,E_\omega] f\|_{L^p(B^N(0,R))}
    \\\le&\|\delta^{s-1}\|_{L^{p}(B^N(0,R))}\|\delta^{1-s}[D,E_\omega] f\|_{L^p(\overline{\omega}^c)}
    \\
    \lesssim&\|\delta^{(s-1)p'}\|_{L^{1}(B^N(0,R))}^{1/p'}\|f\|_{H^{s,p}(\omega)}. 
\end{align*}
for every $R$. 

Note that $(s-1)p'>(\frac1p-1)\frac p{p-1}=-1$, so $\delta^{(s-1)p'}$ is a locally integrable in $\R^N$, which implies that the right-hand side is finite.
When $s\le\frac1p$, $\delta(x)^{(s-1)p'}$ is no longer integrable near the boundary of $\omega$, and we can only interpret the commutator as a distribution. 
\end{enumerate} 
\end{rem} 

 Note that when $p = \infty$ and $s>0$, we have $\sup\limits_{j\in\N}2^{js}\|\phi_j\ast f\|_{L^\infty(\omega)}=\big\|\sup\limits_{j\in\N}2^{js}|\phi_j\ast f|\big\|_{L^\infty(\omega)}$, or $ \Bs_{\infty,\infty}^s(\omega;\phi)=\Fs^s_{\infty,\infty} (\omega;\phi)$ (also see \cite[Remark 2.3.4/3]{Tr83}). Thus by \eqref{Eqn::commest_rem::StrongEst} and Proposition \ref{Prop::OmegaEqvNorm} \ref{Item::OmegaEqvNorm::Hold}, we have the following estimate for the H\"older-Zygmund space:
 \begin{cor}\label{Cor::Comm_Hold}Let $s>0$ and let $E_\omega,\omega$ be as in Theorem 4.1. There is a $C>0$ such that
 \begin{equation} \label{commest_Holder}
  \| \del^{1-s}  [D, E_{\om}] f  \|_{L^{\infty}(\overline\omega^c)} 
  \leq C \| f \|_{\La^s (\om)}, \quad f\in\La^s(\om). 
\end{equation}
 \end{cor}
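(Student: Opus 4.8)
The plan is to reduce Corollary \ref{Cor::Comm_Hold} to the stronger estimate \eqref{Eqn::commest_rem::StrongEst} of Remark \ref{commest_rem}\ref{commest_rem1}, applied in the endpoint case $p=\infty$. That remark asserts that for all $s\in\R$ and $1\le p\le\infty$ one has
\[
  \|\delta^{1-s}[D,E_\omega]f\|_{L^p(\overline\omega^c)}\le C_{s,p,\phi}\,\|f\|_{\Fs^s_{p,\infty}(\omega;\phi)},\qquad f\in\Ss'(\R^N),
\]
so the only remaining point is to identify the right-hand side, for $p=\infty$, with the H\"older--Zygmund norm $\|f\|_{\La^s(\omega)}$ up to a constant. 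First I would take $p=\infty$ in \eqref{Eqn::commest_rem::StrongEst}, obtaining
\[
  \|\delta^{1-s}[D,E_\omega]f\|_{L^\infty(\overline\omega^c)}\le C\,\Big\|\sup_{j\in\N}2^{js}|\phi_j\ast f|\Big\|_{L^\infty(\omega)}=C\,\|f\|_{\Fs^s_{\infty,\infty}(\omega;\phi)}.
\]
Here I have used the elementary identity $\big\|\sup_j 2^{js}|\phi_j\ast f|\big\|_{L^\infty(\omega)}=\sup_j 2^{js}\|\phi_j\ast f\|_{L^\infty(\omega)}$, which holds since the supremum over $j\in\N$ of nonnegative measurable functions commutes with the essential supremum over $\omega$ (this is exactly the observation recorded in the paragraph preceding the corollary, that $\Bs^s_{\infty,\infty}(\omega;\phi)=\Fs^s_{\infty,\infty}(\omega;\phi)$, cf.\ \cite[Remark 2.3.4/3]{Tr83}).

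Next I would invoke Proposition \ref{Prop::OmegaEqvNorm}\ref{Item::OmegaEqvNorm::Hold}: for $s>0$ the H\"older--Zygmund space $\La^s(\omega)$ carries the equivalent norm $\|f\|_{\Bs^s_{\infty,\infty}(\omega;\phi)}=\sup_{j\in\N}2^{js}\|\phi_j\ast f\|_{L^\infty(\omega)}$, where $\phi$ is the system from Lemma \ref{Lem::k_dyadic}\ref{Item::k_dyadic::Phi} that enters the definition of $E_\omega$. Combining the two displays gives
\[
  \|\delta^{1-s}[D,E_\omega]f\|_{L^\infty(\overline\omega^c)}\le C\,\|f\|_{\Bs^s_{\infty,\infty}(\omega;\phi)}\le C'\,\|f\|_{\La^s(\omega)},
\]
which is precisely \eqref{commest_Holder}. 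Finally I should note that $\La^s(\omega)\subseteq\Ss'(\R^N)$ after extension (e.g.\ via $E_\omega$ or any bounded extension, using Proposition \ref{Prop::ext_op}\ref{Item::ext_op::Hold}), so the hypothesis $f\in\Ss'(\R^N)$ needed to apply \eqref{Eqn::commest_rem::StrongEst} is automatically met for $f\in\La^s(\omega)$; strictly speaking the left side only depends on $f|_\omega$ since $\phi_j\ast f$ is evaluated on $\omega$, so the estimate is well-posed as stated.

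The genuine content, of course, all sits in \eqref{Eqn::commest_rem::StrongEst} itself, whose proof is deferred to the body of Section 4 (via the reduction in Remark \ref{commest_rem}\ref{commest_rem2} for $s>1$ and a direct Littlewood--Paley argument for general $s$); for the corollary there is no real obstacle. The only place where a small amount of care is required is the passage from the $\Fs^s_{\infty,\infty}$ quantity to the Zygmund norm: one must make sure the Fourier-support/moment conventions in Lemma \ref{Lem::k_dyadic} match those demanded by the characterization in Proposition \ref{Prop::OmegaEqvNorm}\ref{Item::OmegaEqvNorm::Hold} — but this is already guaranteed because the very same $\phi$ is used in $E_\omega$ and in that proposition, so the two norms are literally the same expression. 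Hence the corollary follows by simply specializing \eqref{Eqn::commest_rem::StrongEst} to $p=\infty$ and quoting Proposition \ref{Prop::OmegaEqvNorm}.
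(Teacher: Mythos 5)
Your proposal is correct and follows exactly the paper's route: specialize the strengthened estimate \eqref{Eqn::commest_rem::StrongEst} to $p=\infty$, use the identity $\Bs^s_{\infty,\infty}(\omega;\phi)=\Fs^s_{\infty,\infty}(\omega;\phi)$, and invoke Proposition \ref{Prop::OmegaEqvNorm}\ref{Item::OmegaEqvNorm::Hold} to pass to the $\La^s(\omega)$ norm. This is precisely the argument given in the paragraph preceding the corollary, so there is nothing to add.
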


% The following is an immediate consequence of \rt{commest_thm}: \begin{cor} 
%   Suppose $f \in H^{s,p}(\om)$, for $s > \frac{1}{p}$. Then $[D,E_{\om}]f \in L^1(\ov{\om}^c)$.  
% \end{cor}
% \begin{proof}
%  By \eqref{commest} and H\"older's inequality we have 
%  \begin{align*}
%     \| [D, E_{\om}] f \|_{L^1(\ov{\om}^c)}  
%      &\leq \| \del^{1-s} [D, E_{\om}] f \|_{L^p (\ov{\om}^c)} \| \del^{s-1} \|_{L^p'(\ov{\om}^c)}
%      \leq C
%  \end{align*}
%  for any $s>\frac{1}{p}$. 
% \end{proof}

To prove \rt{commest_thm} we need a sequence of lemmas.  
		
		\begin{lemma}\label{convo_le}
			Let $\phi, \psi$ be two generalized dyadic resolutions. Then for any $M >0$ and $\gm \in \N^N$, there is a $C = C_{M. \gm} > 0$ such that 
			\eq{convo_est}
			\int_{|x| \geq 2^{-l}}  \left| D^{\gm} \phi_j \ast \psi_k (x) \right| \, dx  
			\leq C 2^{\min(j,k)  | \gm| - M (|j-k|  + \max (j,k) - l ) } ,\quad j,k\ge0,\quad l\in\Z. 
			\eeq
		\end{lemma}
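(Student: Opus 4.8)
The plan is to reduce everything to a convolution estimate at a single pair of "model" scales and then rescale. First I would observe that by the structure of generalized/regular dyadic resolutions, $\phi_j$ and $\psi_k$ are dilates of fixed Schwartz functions: $\phi_j(x) = 2^{N(j-1)}\phi_1(2^{j-1}x)$ (and similarly for the $\psi$'s, possibly with the $j=0$ term handled separately since $\phi_0,\psi_0$ are arbitrary Schwartz — but $D^\gamma$ falling on a fixed Schwartz function at scale $1$ is harmless, and the cancellation moments only matter for $j,k\ge1$). Writing $m=\min(j,k)$, $M_{\max}=\max(j,k)$, and assuming WLOG $j\le k$, I would change variables to put the coarser function at unit scale: then $\phi_j\ast\psi_k$ becomes, up to the normalizing $2^{N(\cdot)}$ factors, a rescaled copy of $\phi_1\ast(\psi_1)_{2^{-(k-j)}}$ where $(\psi_1)_\varepsilon(y)=\varepsilon^{-N}\psi_1(y/\varepsilon)$ with $\varepsilon=2^{-(k-j)}\le1$. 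The derivative $D^\gamma$ applied to the product, after rescaling, produces the factor $2^{m|\gamma|}$ (derivatives fall on the coarser-scale function), which accounts for the $2^{m|\gamma|}$ in \eqref{convo_est}.

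The heart of the matter is the pointwise bound on $D^\gamma\phi_1\ast(\psi_1)_\varepsilon$ and its integral over $|x|\ge r$ for $r=2^{-l}$ rescaled appropriately. Here I would use the standard "almost orthogonality" estimate: if $\psi_1$ has all moments vanishing (which holds for a generalized dyadic resolution, $\int y^\alpha\psi_1=0$ for all $\alpha\in\N^N$, including the $\phi$ side which is a regular dyadic resolution so $\phi_1$ also has this property), then for any $L$,
\[
|h_1 \ast (h_2)_\varepsilon(x)| \le C_L\, \varepsilon^L (1+|x|)^{-L},\qquad \varepsilon\le1,
\]
by Taylor-expanding the fine-scale bump against the moments of the coarse one (or vice versa), with the rapid spatial decay coming from the Schwartz decay of both functions. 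Applying this with $\varepsilon=2^{-|j-k|}$ and choosing $L$ large enough (say $L = N + M + \max(0,|\gamma|)$ and then also absorbing the $2^{m|\gamma|}$ Jacobian/derivative factor) gives $\varepsilon^L=2^{-L|j-k|}$, which supplies the $2^{-M|j-k|}$ decay. Then integrating the spatial tail $(1+|x|)^{-L}$ over $|x|\ge \rho$ (where after undoing the rescaling $\rho\approx 2^{M_{\max}-l}$, since the finest relevant scale is $2^{-M_{\max}}$ and we integrate outside radius $2^{-l}$) yields a factor $\rho^{-(L-N)}\lesssim 2^{-M(M_{\max}-l)}$, which is exactly the remaining $2^{-M(\max(j,k)-l)}$ term. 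Collecting: $2^{m|\gamma|}\cdot 2^{-M|j-k|}\cdot 2^{-M(M_{\max}-l)}$, as desired. One must be mildly careful that $M_{\max}-l$ can be negative (when $l>M_{\max}$), in which case the integral over $|x|\ge 2^{-l}$ is just bounded by the total $L^1$-type bound and $2^{-M(M_{\max}-l)}\ge1$ makes the inequality trivially hold; I would note this case separately.

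The main obstacle I anticipate is bookkeeping the scales correctly so that all three exponents come out with the right signs and the right variable ($m$ for the derivative gain, $|j-k|$ and $\max(j,k)-l$ for the two decay mechanisms), especially disentangling which function sits at which scale after the change of variables and making sure the spatial decay radius is measured against the *finer* scale $2^{-\max(j,k)}$ rather than the coarser one. The cancellation-versus-decay tradeoff (choosing $L$) is routine once the geometry is set up, and the $j=0$ or $k=0$ edge cases are handled by noting $\phi_0,\psi_0$ are just fixed Schwartz functions so no cancellation is needed there and the estimate degenerates to a trivial Schwartz-tail bound. I would present the argument for $1\le j\le k$ in detail and indicate that $j=0$ and the symmetric case follow by the same (or easier) reasoning.
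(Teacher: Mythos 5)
Your plan follows essentially the same route as the paper's proof: rescale so the coarser function sits at unit scale (reducing to $j\in\{0,1\}$), use Taylor expansion against the vanishing moments of $\psi_1$ to extract the $2^{-M|j-k|}$ decay, and integrate the Schwartz tail over the rescaled exterior region to get the $2^{-M(\max(j,k)-l)}$ factor; the paper just carries out the Taylor-remainder integration directly rather than quoting a packaged almost-orthogonality bound. The one bookkeeping slip — after normalizing to the coarser scale the integration radius is $2^{\min(j,k)-l}$, not $2^{\max(j,k)-l}$ — is harmless, since the surplus in the $\varepsilon^L$ factor absorbs the difference, and you correctly flag this as the delicate point.
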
 
		\begin{proof}
		By symmetry we can assume $j \leq k$. We first use the scaling properties of $\phi$ and $\psi$ to show that the estimate can be reduced to the cases $j= 0$ and $j=1$. When $1<j\le k$, recall that $\phi_j (x) = 2^{N(j-1)} \phi_1 (2^{j-1} x)$ and $\psi_k(x)=2^{N(k-1)}\psi_1(2^{k-1}x)$, so
		\begin{align*}
		\phi_j \ast \psi_k(x)
		&=2^{N(j+k-2)}\int\phi_1 (2^{j-1} x-2^{j-1}y)\psi_1(2^{k-1}y)dy
		\\&= \int\phi_1 (2^{j-1} x-\tilde y)\psi_1(2^{k-j}\tilde y)d\tilde y
		\\ &= 2^{N(j-1)}\phi_1\ast\psi_{k-j+1}(2^{j-1}x).
		\end{align*}
		Therefore taking substitution $\tilde x=2^{j-1}x$ we have
		\begin{align}\label{Eqn::convo_le::convo_est0}  
		\int_{|x| \geq 2^{-l}} \left| D^{\gm} \phi_j \ast \psi_k (x) \right|dx= 2^{(j-1) | \gm| }  \int_{|\tilde x| \geq 2^{j-1-l} } |D^{\gm} \phi_1 \ast \psi_{k-j+1}(\tilde x)|d\tilde x,\quad 1\le j\le k.
		\end{align} 
		
	Suppose \eqref{Eqn::convo_le::convo_est0} is true for $j=1\le k$. Since $k \geq j$, the right hand side of \eqref{Eqn::convo_le::convo_est0} is bounded by
\begin{align*}
   C 2^{(j-1) |\gm| }  2^{|\gm| - M(|1- ( k-j+1)| +  \max(1, k-j+ 1) + (j-1 -l )) }
   &=C 2^{j|\gm| - M(|j-k|+k-l)}
   \\& =C 2^{j|\gm| - M(|j-k|+\max(j,k)-l)}.
\end{align*}
	This proves the reduction. 
		
		Next we consider the case for $j \in \{ 0, 1\}$ and $k\ge 1$. Write $k = 1 + m$, for $m\ge 0$. Since $\int _{\R^N} x^{\all}  \psi_1 (x) = 0$ for any $\all\in\N^N$, we have
		\begin{align*}  
		D^{\gm} \phi_j  \ast \psi_{m+1} (x) 
		&=  \int_{\R^N} D^{\gm} \phi_j (x-y) \psi_{m+1} (y) \, dy
		\\ &= \int_{\R^N} D^{\gm} \phi_j (x-y) 2^{Nm} \psi_{1} (2^m y) \, dy 
		\\ &= \int_{\R^N} D^{\gm} \phi_j (x-2^{-m} y) \psi_{1} ( y) \, dy  
		\\ &= \int_{\R^N} \left(  D^{\gm} \phi_j (x- 2^{-m} y) - \sum_{|\all| \leq M'-1} (-2^{-m}y)^{\all} 
		\frac{D^{(\gm + \all)} \phi_j (x)}{\all!} \right)  \psi_1 (y)  \, dy ,
		\end{align*}    
		where $M'$ is some large number to be chosen. By Taylor's theorem, the expression in parenthesis is bounded in absolute value by  
		\[
		\frac{1}{M'!} | 2^{-m} y |^{M'} \sup_{B(x, 2^{-m} |y| )}  \left| D^{|\gm | + M'} \phi_j \right| .
		\]
		Since $\phi_0$ and $\phi_1$ are Schwartz, for $j=0,1$ we have  
		\begin{equation}\label{Eqn::convo_le::BddTmp}
		\sup\limits_{B(x,2^{-m}|y|)}\left| D^{|\gm | + M'} \phi_j \right|  \lesssim_{\gamma,M'}
		\begin{cases}
		(1+ |x|)^{- M'}, & |x| \geq 2^{1-m} |y| 
		\\  1,  & |x| < 2^{1-m} |y|
		\end{cases}\quad \text{for }M'>0,\gamma\in\N^N.
		\end{equation}
		Therefore for $j =0$ or $1$ we have 
		\begin{align*}  
		&\int_{|x|  \geq 2^{-l} } \left|  D^{\gm} \phi_j  \ast \psi_{m+1} (x) \right| \, dx
		\\ & \quad \leq   \int_{|x|  \geq 2^{-l}} \, dx \left( \int_{|y| \leq 2^{m-1} |x| } + \int_{|y| \geq 2^{m-1}|x|}  \right)    \frac{| 2^{-m} y |^{M'} }{M'!} \sup_{B_x( 2^{-m} |y| )}  \left| D^{|\gm | + {M'}} \phi_j \right|   |\psi_1 (y) |  \, dy
		\\ & \quad \leq \int_{|x|  \geq 2^{-l}} \left( \int_{|y| \leq 2^{m-1}  |x| } | 2^{-m} y |^{M'} (1+ |x|)^{-{M'}} |\psi_1(y) | \, dy + \int_{|y| \geq 2^{m-1}  |x|} | 2^{-m} y |^{M'} | \psi_1 (y) | \, dy \right)  \, dx.
		\end{align*}   
		
		Using polar coordinates and \eqref{Eqn::convo_le::BddTmp} the above is bounded by 
		\begin{align*}  
		&2^{-m{M'}} \int_{2^{-l}} ^{\infty} \rho^{N-1} \, d \rho  \bigg( (1+ \rho)^{-{M'}}   \int_0^{\infty}  r^{M'} r^{N-1} (1+r)^{-2{M'} -N} \, dr
		\\
		&\qquad + \int_{2^{m-1} \rho}^{\infty} r^{M'} (1+r)^{-2{M'} -N} r^{N-1} \, dr \bigg)   
		\\\lesssim&   2^{-m {M'}}   \int_{2^{-l}} ^{\infty}  \rho^{N-1}  \left(  (1 + \rho)^{- {M'}} + \int_{2^{m-1} \rho}^{ \infty}  (1+ r)^{-{M'}-1} \, dr \right)  \, d \rho
		\\\lesssim& 2^{-m{M'}}  \int_{2^{-l}}^{\infty} \rho^{N-1} \left( ( 1+ \rho )^{-{M'}} + (1+ 2^{m-1} \rho)^{-{M'}} \right) \, d  \rho
		\\\lesssim&  2^{-m{M'}}  \int_{2^{-l}}^{\infty} \rho^{N-1} ( 1+ \rho )^{-{M'}}  \, d  \rho.
		\end{align*}   
		Taking ${M'} \geq 2M + N$, then the right-hand side is bounded by $2^{-2Mm}  2^{Ml} = 2 ^{-M(m+m- l)}$, which is what we need for the estimate. 
\end{proof}

\begin{cor}\label{Cor::PsiDecay}
Let $(\psi_j)_{j=0}^\infty\seq\Ss( \R^N)$ be a generalized dyadic decomposition. Then for any $M>0$ and $\gamma\in\N^N$ there is a $C=C_{\psi,M,\gamma}>0$ such that
\begin{equation}\label{Eqn::PsiDecay}
\int_{|x|\ge2^{-l}}|D^\alpha\psi_k(x)|dx\le C 2^{k|\gamma|-M\max(0,k-l)},\quad\forall  k\in\N,\quad l\in\Z.
\end{equation}
\end{cor}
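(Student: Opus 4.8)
The plan is to view Corollary~\ref{Cor::PsiDecay} as the degenerate case of Lemma~\ref{convo_le} in which the factor $\phi_j$ is replaced by the Dirac mass $\delta_0$ (so that $\delta_0\ast\psi_k=\psi_k$); since $\delta_0$ carries no Taylor expansion, the argument behind Lemma~\ref{convo_le} collapses to a direct rescaling estimate, and I would simply carry that out. The only input is the rapid decay of Schwartz functions: for any $M'>N$ one has $\int_{|y|\ge R}(1+|y|)^{-M'}\,dy\lesssim_{M',N}\min(1,R^{N-M'})$ for all $R>0$, hence, choosing $M'\ge N+M$ and using $|D^\gamma\psi_1(y)|\lesssim_{\psi,M',\gamma}(1+|y|)^{-M'}$, the substitution $R=2^t$ gives
\[
\int_{|y|\ge2^t}|D^\gamma\psi_1(y)|\,dy\ \lesssim\ 2^{-M\max(0,t)},\qquad t\in\R,
\]
and the same bound holds with $\psi_1$ replaced by $\psi_0$.

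For $k=0$ this is precisely the claimed inequality with $t=-l$, since $2^{0\cdot|\gamma|-M\max(0,0-l)}=2^{-M\max(0,-l)}$. For $k\ge1$ I would use the scaling relation $\psi_k(x)=2^{N(k-1)}\psi_1(2^{k-1}x)$ from Definition~\ref{Defn::DyaRes}, which gives $D^\gamma\psi_k(x)=2^{(N+|\gamma|)(k-1)}(D^\gamma\psi_1)(2^{k-1}x)$; substituting $y=2^{k-1}x$ then yields
\[
\int_{|x|\ge2^{-l}}|D^\gamma\psi_k(x)|\,dx=2^{(k-1)|\gamma|}\int_{|y|\ge2^{k-1-l}}|D^\gamma\psi_1(y)|\,dy\ \lesssim\ 2^{(k-1)|\gamma|}\,2^{-M\max(0,k-1-l)},
\]
using the tail bound above with $t=k-1-l$. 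Finally I would absorb the harmless unit shift: $2^{(k-1)|\gamma|}\le2^{k|\gamma|}$ and $\max(0,k-1-l)\ge\max(0,k-l)-1$, so $2^{-M\max(0,k-1-l)}\le2^{M}2^{-M\max(0,k-l)}$, and the factor $2^{M}$ can be folded into $C=C_{\psi,M,\gamma}$, yielding \eqref{Eqn::PsiDecay}.

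There is no genuine obstacle here: the entire content is the exact rescaling identity for $\psi_k$ together with Schwartz decay, and the only point requiring (minor) care is the bookkeeping of the $\max(0,\cdot)$ exponent when passing from $k-1-l$ to $k-l$, which costs only a multiplicative constant.
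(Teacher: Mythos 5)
Your proof is correct, and it takes a different route from the one the paper actually writes down. The paper deduces the corollary from Lemma~\ref{convo_le}: it picks an arbitrary regular dyadic resolution $\phi$, uses the reproducing identity $\psi_k=\sum_{j=0}^\infty\phi_j\ast\psi_k$, and sums the bounds \eqref{convo_est} over $j$ (the geometric factor $2^{-M|j-k|}$ makes the sum converge and the $\max(j,k)-l$ exponent collapses to $\max(0,k-l)$ after re-indexing). You instead give the direct, self-contained argument that the paper only alludes to in the remark immediately following the corollary (``can also be proved independently without the use of Lemma~\ref{convo_le}''): pure rescaling of $\psi_k$ to $\psi_1$ plus the Schwartz tail bound $\int_{|y|\ge 2^t}|D^\gamma\psi_1|\lesssim 2^{-M\max(0,t)}$, with the harmless unit shift in $k-1-l$ absorbed into the constant. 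Your version is more elementary and makes transparent that the estimate uses none of the vanishing-moment structure of $\psi_1$ --- only the scaling relation and rapid decay --- whereas the paper's version is essentially free once Lemma~\ref{convo_le} is in hand. The bookkeeping in your argument (the case $k=0$ treated separately since $\psi_0$ is an arbitrary Schwartz function, the substitution $y=2^{k-1}x$, and the inequality $\max(0,k-1-l)\ge\max(0,k-l)-1$) is all correct.
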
 
\begin{proof}
	Let $\phi=(\phi_j)_{j=0}^\infty$ be any regular dyadic resolution, so $\psi_k=\sum_{j=0}^\infty \phi_j\ast\psi_k$ and we have $\int_{|x|\ge2^{-l}}|D^\alpha\psi_k|\le\sum_{j=0}^\infty\int_{|x|\ge2^{-l}}|D^\alpha (\phi_j\ast\psi_k)|$. Taking sum over $j\in\N$ on the right hand side of \eqref{convo_est} we get \eqref{Eqn::PsiDecay}. 
\end{proof}
		We remark that Corollary \ref{Cor::PsiDecay} can also be proved independently without the use of \rl{convo_le}. 
		
		\medskip
		In our proof we use the following dyadic decomposition: 
		\begin{gather} \label{Pk}
		P_k:=\{(x', x_N):2^{-\frac12-k}< x_N-\rho(x')<2^{\frac12-k}\} \seq \om, \quad  k\in\Z, 
		\\ \label{Sk}
		S_k:=\{(x',x_N):-2^{\frac12-k}<x_N -\rho(x')<-2^{-\frac12-k}\} \seq \overline\omega^c,\quad  k\in\Z. 
		\end{gather} 
	Up to sets of measure zero, we have disjoint unions $\om = \bigsqcup_{k \in \Z} P_k $ and $\overline\omega^c = \bigsqcup_{k \in \Z} S_k$.

		\begin{lemma} 
			For any $M$ there is a $C_M >0$ such that for every $ j,j'  \in \N$ and $k,k' \in \Z$, 
			\eq{diag_est} 
			\|\psi_j\ast(D \mb 1_\om \cdot(\phi_j\ast\psi_{j'}\ast(\mb{1}_{P_{k'}}\cdot(\phi_{j'}\ast f)))\|_{L^p(S_k)}
			\leq C2^{j-M(|j-k|+|j-j'|+|j'-k'|)}\|\phi_{j'}\ast f\|_{L^p(P_{k'})}.
			\eeq 
		\end{lemma}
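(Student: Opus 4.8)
The plan is to rewrite the displayed quantity as a boundary integral over $b\om$ and then estimate it by Schur's test together with the kernel bounds of \rl{convo_le}. Set $v:=\phi_j\ast\psi_{j'}\ast(\mathbf{1}_{P_{k'}}\cdot(\phi_{j'}\ast f))$, a $C^\infty$ function. Since $\om=\{x_N>\rho(x')\}$ with $\rho\in C^{0,1}$, the distributional gradient $D\mathbf{1}_\om$ is a first-order distribution carried by the graph $b\om$ with bounded density, so $D\mathbf{1}_\om\cdot v$ makes sense and
\[
\psi_j\ast(D\mathbf{1}_\om\cdot v)(x)=\int_{\R^{N-1}}\psi_j\big(x-(y',\rho(y'))\big)\,a(y')\,v(y',\rho(y'))\,dy'=:(\mathcal R_jh)(x),\qquad h:=v|_{b\om},
\]
where $a$ is a bounded $\R^N$-valued function determined by $\rho$. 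Hence it is enough to prove two estimates: a ``reproduction'' bound
\[
\|\mathbf{1}_{S_k}\,\mathcal R_jh\|_{L^p(\R^N)}\lesssim 2^{j/p'}\,2^{-M|j-k|}\,\|h\|_{L^p(\R^{N-1})},
\]
and a ``trace'' bound $\|h\|_{L^p(\R^{N-1})}\lesssim 2^{k'/p}\,2^{-M(|j-j'|+|j'-k'|)}\,\|\phi_{j'}\ast f\|_{L^p(P_{k'})}$. Both of the arguments below will also show that the left side of \re{diag_est} vanishes unless $j\ge k-1$ and $\min(j,j')\ge k'-1$; in that range $k'\le\min(j,j')+1\le j+1$, so $2^{j/p'}\cdot2^{k'/p}\lesssim 2^{j}$, and the two estimates multiply to give \re{diag_est}. (Here $p'=\frac p{p-1}$; the endpoint $p=\infty$, needed for \rt{commest_thm} in the H\"older case, is covered, since then $1/p=0$ and no factor $2^{k'/p}$ appears.)

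For the reproduction bound, fix $x\in S_k\subseteq\overline\om^c$ and $y'\in\R^{N-1}$ with $\psi_j(x-(y',\rho(y')))\ne0$, so $x-(y',\rho(y'))\in-\Kb\cap\{z_N<-2^{-j}\}$ by \rl{Lem::k_dyadic}. Combining $x_N-\rho(y')<-|x'-y'|$ with $\rho(y')\le\rho(x')+\tfrac12|x'-y'|$ and $\rho(x')-x_N<2^{1/2-k}$ (see \re{Sk}) gives $|x'-y'|\lesssim 2^{-k}$ and $2^{-j}\le\rho(y')-x_N\lesssim 2^{-k}$; thus the integrand is zero unless $j\ge k-1$, and otherwise $|x-(y',\rho(y'))|\gtrsim\max(2^{-k},2^{-j})$. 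Using $|\psi_j(z)|\lesssim 2^{Nj}(1+2^j|z|)^{-L}$ we then get, for $x\in S_k$, that $\int_{\R^{N-1}}|\psi_j(x-(y',\rho(y')))|\,dy'\lesssim 2^{Nj}2^{-L(j-k)_+}\cdot2^{-(N-1)k}\lesssim 2^{j}2^{-(L-N)(j-k)_+}$, and, for fixed $y'$, $\int_{S_k}|\psi_j(x-(y',\rho(y')))|\,dx\lesssim 2^{Nj}2^{-L(j-k)_+}\cdot2^{-Nk}\lesssim 2^{-(L-N)(j-k)_+}$. Schur's test (with $L$ large relative to $M$, absorbing the case $j=k-1$ into the constant) yields the reproduction bound.

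For the trace bound, write $h(y')=v(y',\rho(y'))=\int_{P_{k'}}(\phi_j\ast\psi_{j'})\big((y',\rho(y'))-y_2\big)\,(\phi_{j'}\ast f)(y_2)\,dy_2$. By \rl{Lem::k_dyadic}, $\phi_j\ast\psi_{j'}$ is supported in $-\Kb\cap\{z_N<-2^{-j}-2^{-j'}\}$ and concentrated at scale $2^{-\min(j,j')}$. If the integrand is nonzero, then $(y',\rho(y'))-y_2$ lies in this cone with $y_2\in P_{k'}$, which, as before, forces $2^{-\min(j,j')}\le(y_2)_N-\rho(y')\lesssim 2^{-k'}$ and $|(y',\rho(y'))-y_2|\gtrsim 2^{-k'}$; hence $h\equiv0$ unless $\min(j,j')\ge k'-1$, and otherwise the relevant values of $\phi_j\ast\psi_{j'}$ are taken at $|z|\gtrsim 2^{-k'}$, beyond its concentration scale. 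Applying \rl{convo_le} with $\gamma=0$ at radius $2^{-l}\sim 2^{-k'}$ (with $l\le\max(j,j')$ on the nontrivial range) bounds one Schur factor, $\sup_{y'}\int_{P_{k'}}|(\phi_j\ast\psi_{j'})((y',\rho(y'))-y_2)|\,dy_2\lesssim 2^{-M(|j-j'|+(\max(j,j')-k')_+)}$; for the other, for fixed $y_2\in P_{k'}$ the map $y'\mapsto(y',\rho(y'))-y_2$ sweeps a Lipschitz graph lying at distance $\gtrsim 2^{-k'}$ from $0$, so the pointwise decay of $\phi_j\ast\psi_{j'}$ (cf.\ \rc{Cor::PsiDecay}) gives $\sup_{y_2\in P_{k'}}\int_{\R^{N-1}}|(\phi_j\ast\psi_{j'})((y',\rho(y'))-y_2)|\,dy'\lesssim 2^{k'}\,2^{-M|j-j'|}\,2^{-(L-N)(\min(j,j')-k')_+}$. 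Since $\max(j,j')-k'$ dominates $|j'-k'|$ up to a bounded term on the nontrivial range, a second application of Schur's test (splitting the exponents as $1/p'$ on the $L^1$-in-$y_2$ factor and $1/p$ on the $L^1$-over-the-boundary factor) produces the trace bound.

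Assembling the reproduction bound, the trace bound, and $2^{j/p'}2^{k'/p}\lesssim 2^j$ gives \re{diag_est}. The step I expect to be the main obstacle is the trace bound: one must extract the three independent decay factors $|j-k|$, $|j-j'|$, $|j'-k'|$ simultaneously, but the ``soft'' decay obtained by comparing a spatial separation (namely $\sim 2^{-k}$ below $b\om$ for $\psi_j$, or $\sim 2^{-k'}$ above $b\om$ for $\phi_j\ast\psi_{j'}$) with the corresponding kernel scale ($2^{-j}$, resp.\ $2^{-\min(j,j')}$) is only available in some regimes; the remaining ranges of $(j,j',k,k')$ must be closed using the ``vanishes unless'' constraints $j\ge k-1$ and $\min(j,j')\ge k'-1$. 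One must also track the amplitudes: the reproduction and trace steps lose $2^{j/p'}$, resp.\ $2^{k'/p}$, and it is precisely the constraint $k'\lesssim j$ on the nontrivial range, together with the chosen Schur splitting, that keeps the total amplitude at $2^{j}$ rather than $2^{j+j/p}$.
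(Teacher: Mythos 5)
Your argument is correct, but it follows a genuinely different route from the paper's. The paper never introduces the surface measure: it exploits $D\mathbf 1_\om=-D\mathbf 1_{\overline\om^c}$ to write the term in two ways via the Leibniz rule (once with $\mathbf 1_\om$, once with $\mathbf 1_{\om^c}$), extracts the decay $2^{-M(j-k)}$ from the first expression (separation of $S_k$ from $\om$, using $L^1$ tail bounds on $D\psi_j,\psi_j$ plus Young and \rl{convo_le}) and the decay $2^{-M(j'-k')}$ from the second (separation of $\overline\om^c$ from $P_{k'}$), and then combines them through $\min\{a,b\}\le\sqrt{ab}$. You instead represent $D\mathbf 1_\om$ by Gauss--Green as a bounded density on the graph, factor the operator through the trace $h=v|_{b\om}$, and run two Schur tests; the two decay factors then come from the geometric separation of $S_k$ and of $P_{k'}$ from the graph, and the amplitude $2^j$ is recovered as $2^{j/p'}\cdot 2^{k'/p}$ via the support constraint $k'\lesssim j$. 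Your version is more geometrically transparent and cleanly isolates where each gain comes from, but it costs two ingredients the paper avoids: the Gauss--Green representation of $D\mathbf 1_\om$ for a Lipschitz graph, and, more substantively, a \emph{pointwise} almost-orthogonality bound $|\phi_j\ast\psi_{j'}(z)|\lesssim 2^{N\min(j,j')}2^{-L|j-j'|}(1+2^{\min(j,j')}|z|)^{-L}$ for the ``$L^1$ over the boundary'' Schur factor. Neither \rl{convo_le} nor \rc{Cor::PsiDecay} states this (they give $L^1(\R^N)$ tail bounds, which do not control integrals over a hypersurface), so you must prove it; it does follow from the same Taylor-expansion-against-vanishing-moments argument as \rl{convo_le}, but it should be written out rather than cited as ``cf.''. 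With that supplied, the exponent bookkeeping you describe (vanishing unless $j\ge k-O(1)$ and $\min(j,j')\ge k'-O(1)$, then $(\max(j,j')-k')_+$ and $(\min(j,j')-k')_++|j-j'|$ each dominating $|j'-k'|$ up to constants) closes correctly, including the endpoint $p=\infty$ needed for \rc{Cor::Comm_Hold}.
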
  
		\begin{proof}	
		Since $D \mb{1}_{\om} = - D \mb1_{\overline\omega^c}$ as distributions on $\R^N$, the term $\psi_j\ast(D \mb 1_\om \cdot(\phi_j\ast\psi_{j'}\ast(\mb{1}_{P_{k'}} \cdot(\phi_{j'}\ast f)))$ in \eqref{diag_est} has two identical expressions: 
		\begin{align} \label{comm_exp1}
		A_{jj'k'}& := D\psi_j\ast(\1_\om(\phi_j\ast\psi_{j'}\ast(\1_{P_{k'}}(\phi_{j'}\ast f))))-\psi_j\ast(\1_\om  ( D\phi_j\ast\psi_{j'}\ast(\1_{P_{k'}}(\phi_{j'}\ast f)))),
		\\ \label{comm_exp2}
		B_{jj'k'}& := -D\psi_j\ast(\1_{\omega^c}(\phi_j\ast\psi_{j'}\ast(\1_{P_{k'}}(\phi_{j'}\ast f))))+\psi_j\ast(\1_{\omega^c}(D\phi_j\ast\psi_{j'}\ast(\1_{P_{k'}}(\phi_{j'}\ast f)))).
		\end{align}
		
		First we show that the left-hand side of \eqref{diag_est} is $0$ if $j \leq k-2$ or $j' \leq k'- 2$. By \eqref{comm_exp1} and the fact that $\supp \phi_j, \supp \psi_j  \seq \{ x_N < -2^{-j}\} $, we see if $j' \leq k'-2$, 
		\begin{align*}
		\supp ( \phi_j\ast\psi_{j'}\ast(\1_{P_{k'}}\cdot(\phi_{j'}\ast f)) )
		&\seq \supp \phi_j + \supp \psi_{j'} + \supp \left( \1_{P_{k'}}\cdot(\phi_{j'}\ast f)  \right) 
		\\&\seq \{ x_N - \rho  (x') < - 2^{1-j}  - 2^{1-j'} + 2^{-k'}  \}
		\\ &\seq \{ x_N - \rho  (x') < 0 \}, 
		\end{align*}
		and similarly,  
		\begin{align*}
		\supp(D\phi_j\ast\psi_{j'}\ast(\1_{P_{k'}}\cdot(\phi_{j'}\ast f)) ) 
		\seq \{ x_N - \rho  (x') < 0 \}. 
		\end{align*}
		Hence the left-hand side of \eqref{diag_est} vanishes if $j' \leq k'-2$. On the other hand, the expression in  \eqref{comm_exp2} is supported in $ \supp \psi_j + \overline\omega^c  \seq \{  x_N - \rho (x') < - 2^{-j} \}$, which is disjoint from $S_k = \{ -2^{-k+ \yh} < x_N - \rho (x') < -2^{-k - \yh} \}$ when $j < k-1$. Hence the left-hand side of \eqref{diag_est} again vanishes. We shall now assume that $j \geq k-2$ and $ j' \geq k'-2$. 
		
		We first estimate the left-hand side of \eqref{diag_est} using \eqref{comm_exp1}. Write \eqref{comm_exp1} as $A_{jj'k'}=:A^1_{jj'k'}-A^2_{jj'k'}$ where
	\begin{gather*}
	  A^1_{jj'k'} :=D\psi_j\ast(\1_\om(\phi_j\ast\psi_{j'}\ast(\1_{P_{k'}}(\phi_{j'}\ast f)))),
	  \\ 
	  A^2_{jj'k'} := \psi_j\ast(\1_\om  ( D\phi_j\ast\psi_{j'}\ast(\1_{P_{k'}}(\phi_{j'}\ast f)))).    
	\end{gather*}

	Denoting $h_{jj'k'} := \phi_j\ast\psi_{j'}\ast(\1_{P_{k'}}\cdot(\phi_{j'}\ast f))$, we have 
		\[
		\| A^1_{jj'k'} \|_{L^p(S_k)} = \| D \psi_j \ast (\1_{\om} \cdot h_{jj'k'}) \|_{L^p (S_k)}. 
		\]
		For $ x \in S_k$ and $y \in \om$, we have $|x-y|  \geq \dist (S_k, \om) \ge 2^{-1- k}$, and
		\[
		D \psi_j \ast (\1_{\om} h_{jj'k'} ) (x) = \int_{\om  \cap |x-y| \geq 2^{-1-k}} D \psi_j (x-y) h_{jj'k'}(y) \, dy.  
		\] 
		Since $ \psi_j = 2^{n(j-1)} \psi_1( 2^{j-1} x ) $ and $\psi_1$ and $D \psi_1$ are Schwartz functions, we have
		\begin{gather} \label{dpsi_est}    
		\int_{|x| > 2^{-1-k}} | D \psi_j (x)| \,dx = \int_{|x| > 2^{j-2-k}} 2^{j-1} |D  \psi_1(x) | \, dx
		\lesssim_M 2^{j-M(j-k) } 
		\\  \label{psi_est} 
		\int_{|x| > 2^{-1-k}} |\psi_j (x) | \, dx = \int_{|x| > 2^{j-2 -k}}  | \psi_1 (x) |  \lesssim_M 2^{-M (j-k)}. 
		\end{gather}
		In view of \rl{convo_le} and $j \geq k-2$, we have 
		\[
		\| D^{\gm} \phi_j \ast \psi_{j'} \|_{L^1(\R^N)}  \lesssim 2^{j |\gm| -M |j-j'|}. 
		\]
		Applying Young's inequality and using \eqref{dpsi_est}  
		\begin{align*} 
		\| A^1_{jj'k'} \|_{L^p (S_k)}
		&\leq \| D \psi_j \|_{L^1 (\{ |x| \geq 2^{-1- k} \})} \| h_{jj'k'} \|_{L^p (\om)}   
		\\ &\lesssim 2^{j- M(j-k)} \| \phi_j\ast\psi_{j'}\ast(\1_{P_{k'}}\cdot(\phi_{j'}\ast f)) \|_{L^p(\om)}
		\\ &\lesssim 2^{j- M(j-k)} \| \phi_j \ast \psi_{j'} \|_{L^1 (\R^N)} \| \phi_{j'} \ast f \|_{L^p (P_{k'})}  
		\\ &\lesssim 2^{j} 2^{-M|j-j'|} 2^{-M (j-k)} \| \phi_{j'} \ast f \|_{L^p (P_{k'})}. 
		\end{align*}
		Similarly by \eqref{psi_est} and \rl{convo_le} we can show that  
		\[
		\| A^2_{jj'k'}  \|_{L^p (S_k)}  \lesssim 2^{j} 2^{-M|j-j'|} 2^{-M (j-k)} \| \phi_{j'} \ast f \|_{L^p (P_{k'})}. 
		\]
		
		Next, we estimate the left-hand side of \eqref{diag_est} using \eqref{comm_exp2}. Write \eqref{comm_exp2} as $B_{jj'k'}=:-B^1_{jj'k'}+B^2_{jj'k'}$ where
		\begin{gather*}
		    B^1_{jj'k'}:=D\psi_j\ast(\1_{\omega^c}(\phi_j\ast\psi_{j'}\ast(\1_{P_{k'}}(\phi_{j'}\ast f)))),\\
		    B^2_{jj'k'}:=\psi_j\ast(\1_{\omega^c}(D\phi_j\ast\psi_{j'}\ast(\1_{P_{k'}}(\phi_{j'}\ast f)))).
		\end{gather*}
		Denoting $h'_{j'k'} = \1_{P_{k'}}  \cdot ( \phi_{j'} \ast f)$ and applying Young's inequality we get
		\eq{B1}
		\| B^1_{j'k'} \|_{L^p (S_k)} \lesssim \| D \psi_j \|_{L^1 (\R^N)} \| \phi_j \ast \psi_{j'}  \ast h'_{j'k'} \|_{L^p (\overline\omega^c)}. 
		\eeq 
		For $x \in \overline\omega^c $ and $y  \in P_{k'}$, we have $|x-y| \geq \dist (P_{k'}, \overline\omega^c) \ge 2^{- 1-k'}$. Hence for $x \in \overline\omega^c$, 
		\begin{align*}
		D^{\gm} \phi_j \ast \psi_{j'} \ast h'_{j'k'}(x)
		= \int_{P_{k'} \cap \{ y: |x-y|  \geq 2^{-1 - k'} \}}  (D^{\gm} \phi_j \ast \psi_{j'} ) (x-y)h'_{j'k'}(y) \, dy. 
		\end{align*}
		By Young's inequality and \rl{convo_le},    
		\begin{align*}
		\|   D^{\gm} \phi_j \ast \psi_{j'} \ast h'_{j'k'}\|_{L^p (\overline\omega^c)} 
		&\leq \left( \int_{|x|  \geq 2^{-1 - k'}} | D^{\gm} \phi_j \ast \psi_{j'} |  \right) \| h'_{j'k'} \|_{L^p (\R^N)} 
		\\ &\lesssim  2^{j |\gm| - M (|j-j'| + j' - k') }  \| \phi_{j'} \ast  f \|_{L^p (P_{k'})}.      
		\end{align*}
		Since $ \| D \psi _j \|_{L^1 (\R^N)}  = 2^{j-1} \| D \psi_1 \|_{L^1 (\R^N)} \leq C2^{j-1}$, we get \eqref{B1} that  
		\[
		\| B^1_{jj'k'} \|_{L^p (S_k)} \lesssim 2^{j-1-M|j-j'| -M(j'-k')} \| \phi_{j'} \ast  f \|_{L^p (P_{k'})}.  
		\]
		In the same way we can show that $B_2$ satisfies the same estimate 
		\[
		\| B^2_{jj'k'} \|_{L^p (S_k)} \lesssim 2^{j-M|j-j'| -M(j'-k')} \| \phi_{j'} \ast  f \|_{L^p (P_{k'})}.   
		\]
		Finally combining the estimates for $A^1_{jj'k'}$, $A^2_{jj'k'}$, $B^1_{jj'k'}$ and $B^2_{jj'k'}$ we get
		\begin{align*}
		&\|\psi_j\ast((D \mb 1_\om) \cdot(\phi_j\ast\psi_{j'}\ast(\mb{1}_{P_{k'}}\cdot(\phi_{j'}\ast f)))\|_{L^p(S_k)} 
		\\ &\quad \leq 2^{j - M|j-j'| } \min \left\{  2^{-M (j-k)} , 2^{-M (j'-k')}\right\} \| \phi_{j'} \ast f \|_{L^p (P_{k'})} 
		\\ &\quad \leq 2^{j-M |j-j'| }  2^{-M \max\{ j-k, j'-k' \}} \| \phi_{j'} \ast f  \|_{L^p (P_{k'})}  
		\\&\quad \leq 2^{j-M |j-j'| }  2^{- \frac{M}{2}[( j-k) + (j'-k')] } \| \phi_{j'} \ast f  \|_{L^p (P_{k'})}   
		\\ &\quad \lesssim 2^{j- \frac M2 \left[ | j-j'| + |j-k| + |j'-k'| \right]  } \| \phi_{j'} \ast f  \|_{L^p (P_{k'})}, 
		\end{align*}
		where in the last step we use $ j \geq k-2$ and $j' \geq k'-2$. Replacing $M/2$ by $M$ we get the result.
		\end{proof}
		
		\begin{lemma}  \label{sumle} 
			Let $M > 1$. Then there is a $C_M >0$ such that   
			\[
			\sum_{b \in \Z}  2^{-M (|a-b| + |b-c| )} \leq C_M 2^{- (M-1) | a -c | }, \quad \text{for all $a, c \in \Z$.}     
			\] 
		\end{lemma}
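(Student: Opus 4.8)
The plan is to prove this purely combinatorial estimate by splitting the sum over $b\in\Z$ at the two ``switching points'' where the absolute values in $|a-b|$ and $|b-c|$ change sign, namely at $b=a$ and $b=c$. Without loss of generality I may assume $a\le c$ by symmetry of the expression under swapping $a\leftrightarrow c$. Then I split the sum into three ranges: $b\le a$, $a< b< c$, and $b\ge c$. In the middle range we have $|a-b|+|b-c|=c-a=|a-c|$, so $\sum_{a<b<c}2^{-M(c-a)}\le (c-a)2^{-M(c-a)}$, and since $(c-a)\le C_M 2^{(c-a)}$ crudely (indeed $t\le 2^t$ for $t\ge0$), this contributes at most $C_M 2^{-(M-1)|a-c|}$.

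For the two tail ranges, first take $b\le a\le c$: then $|a-b|+|b-c|=(a-b)+(c-b)=(c-a)+2(a-b)=|a-c|+2(a-b)$, so the sum over $b\le a$ is $2^{-M|a-c|}\sum_{b\le a}2^{-2M(a-b)}=2^{-M|a-c|}\sum_{m\ge0}2^{-2Mm}=C_M 2^{-M|a-c|}$, which is certainly $\le C_M 2^{-(M-1)|a-c|}$ since $M>1$ (the geometric series converges because $M>0$). The range $b\ge c$ is handled identically by symmetry, giving the same bound $C_M 2^{-M|a-c|}$. Summing the three contributions yields the claimed inequality with a new constant $C_M$ depending only on $M$.

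The only mild subtlety, and the step I would be most careful about, is the middle-range estimate, where one loses a linear factor $(c-a)$ rather than gaining a geometric factor; this is precisely why the exponent in the conclusion is $M-1$ rather than $M$. One absorbs the linear factor into an arbitrarily small exponential loss using $t\le C_\eps 2^{\eps t}$ for any $\eps>0$ (here $\eps=1$ suffices). Everything else is a routine geometric-series computation, so I do not anticipate any real obstacle; the lemma is the kind of book-keeping tool one uses to sum the doubly-indexed decay estimate \eqref{diag_est} over $j,j',k,k'$ in the proof of Theorem \ref{commest_thm}.
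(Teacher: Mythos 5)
Your proof is correct and follows essentially the same route as the paper's: split the sum at the switching points $b=a$ and $b=c$, observe that the middle range contributes about $|a-c|$ identical terms of size $2^{-M|a-c|}$, and absorb that linear factor via $t\le 2^{t}$, which is exactly where the exponent drops from $M$ to $M-1$. The paper merely performs a preliminary substitution to reduce to a one-parameter sum before doing the same three-way split, so there is nothing to add.
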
  
		\nid  \tit{Proof.}   
		By a substitution $\wti{b} = a-b$ we see that 
		$
		\sum_{b \in \Z}  2^{-M (|a-b| + |b-c| )} 
		= \sum_{\wti{b} \in \Z}  2^{-M(\wti{b} + |a-c-\wti{b}| }$. 
		So it suffices to show that 
		\[
		\sum_{b \in \Z}  2^{-M(b + |a-b |) } \leq C_M 2^{- (M-1) | a | }.   
		\]
		By symmetry we can assume $a>0$ in the above inequality.  It follows that 
		\begin{align*} 
		\sum_{b \in \Z}  2^{-M(b + |a-b |) } 
		&\leq  \sum_{  \text{ $b \leq 0$  or $b \geq  2a$}}  2^{-M(|b| + |a-b| )}  + \sum_{b=1}^{a} 2^{-M (b + (a-b))}
		+ \sum_{b=a+1}^{2a-1}  2^{-M (b+ (b-a))} 
		\\ &\leq  \sum_{b \in \Z} 2^{-M (|b|+ a)} + \sum _{b=1}^{a-1} 2^{-M a} + \sum_{b=a+1}^{2a-1} 2^{-Ma } 
		\\&= 2^{-Ma}  \left(  \sum_{b \in \Z} 2^{-M |b| } +  \sum_{b=1}^{2a-1} 1 \right) 
		\\&\lesssim (a+1)  2^{-Ma}.
		\end{align*}
		Clearly $a+1 \leq 2^a$, for $a \in \Z^+$. Hence $  \sum_{b \in \Z}  2^{-M(b + |a-b |) } \leq 2^{- (M-1)  |a|}$.  
		\hfill \qed
		
We are now ready to prove the main result of the section. 
\begin{proof}[Proof of \rt{commest_thm}]In view of \rrem{commest_rem} \ref{commest_rem1}, we will prove the following stronger estimate 
\begin{equation}\tag{\ref{Eqn::commest_rem::StrongEst}}
    \|\delta^{1-s}[D,E_{\om}] f\|_{L^p(\overline\om^c) }\leq C_{p,s}\big\|\sup\limits_{j\in\N}2^{js}|\phi_j\ast f|\big \|_{ L^p(\omega)},\quad\forall f \in\Ss'(\om),
\end{equation}
for $1\le p\le\infty$, provided that the norm on the right hand side is finite.

	Let $P_k$ and $S_k$ be the dyadic strips defined in \eqref{Pk} and \eqref{Sk}. Since $\del \approx 2^{-k}$ on $S_k$, we can replace the function $\del$ by $ \sum_{k \in \Z} 2^{-k} \1_{S_k}$. Also 
		\begin{align*}
		[D, E_\omega] f &= DE_\omega f - E_\omega Df  
		\\ &= \sum_{j=0}^{\infty} \psi_j \ast \left[ (D \1_{\om})  \cdot (\phi_j  \ast f) + \1_{\om} \cdot (\phi_j  \ast Df) \right] - \sum_{j=0}^{\infty} \psi_j \ast \left( \1_{\om} \cdot (\phi_j  \ast Df) \right)
		\\&=  \sum_{j=0}^{\infty} \psi_j \ast \left( (D \1_{\om})  \cdot (\phi_j  \ast f)  \right) 
		\\&= \sum_{j,j'=0}^{\infty} \psi_j \ast \left( (D \1_{\om})  \cdot \left(\phi_j  \ast \psi_{j'} \ast ( 1_\omega \cdot (\phi_{j'} \ast f) )\right)  \right) 
		\\&=  \sum_{j,j' \in \N, k' \in \Z} \psi_j \ast \left( (D \1_{\om})  \cdot (\phi_j  \ast \psi_{j'} \ast ( 1_{P_{k'}} \cdot (\phi_{j'} \ast f) ))  \right).
		\end{align*}
		%where the last equality holds since the restriction of $\sum_{j,j' \in \N} \psi_{j'} \ast \1_{\om } \cdot (\phi_{j'} \ast f)$ on $\om$ is $f$.
		Denoting the summand on the right-hand side by $A_{jj'k'}$, we have by \rl{diag_est},   
		\[
		\| A_{jj'k'} \|_{L^p (S_k)}  \lesssim 2^{j- M \left( | j-j'| + |j-k| + |j' -k'|\right) } \| \phi_{j'} \ast  f \|_{L^p (P_{k'})}.  
		\] 
		Therefore
		\begin{align*}
		\|  \del^{1-s}[D, E_\omega ] f \|_{L^p(S_k)} 
		&\lesssim 2^{-k(1-s)} \left\| [D, E_\omega ] f  \right\|_{L^p(S_k)}  
		\\ &\lesssim 2^{(s-1)k}  \sum_{j,j' \in \N, k' \in \Z} 2^{j- M \left( | j-j'| + |j-k| + |j' -k'|\right) } \| \phi_{j'} \ast  f \|_{L^p (P_{k'})}. 
		\end{align*}
		Write $k= j'+ (k-j')$ and $ j= j' + (j-j')$. Then the above is bounded by  
		\begin{align*}
		\|  \del^{1-s}[D, E_\omega ] f \|_{L^p(S_k)} 
		&\lesssim \sum_{j,j' \in \N, k' \in \Z} 2^{(s-1) j' + j'} 2^{| s-1| |k-j'| + |j-j'|}  2^{- M \left( |j-k| + | j-j'| + |j' -k'|\right) } \| \phi_{j'} \ast  f \|_{L^p (P_{k'})}
		\\&\lesssim \sum_{j,j' \in \N, k' \in \Z} 2^{(|s-1| + 1)  (|j-j'| + |k-j| )} 2^{- M \left( |j-k|+ | j-j'| + |j' -k'|\right) } 
		2^{sj'} \| \phi_{j'} \ast  f \|_{L^p (P_{k'})}
		\\ & \lesssim \sum_{j,j' \in \N, k' \in \Z}  2^{- (M - |s-1| -1) \left( |j-k| + | j-j'| + |j'-k'| \right) } 2^{sj'} \| \phi_{j'} \ast  f \|_{L^p (P_{k'})}.  
		\end{align*}
		Applying \rl{sumle} to the sum over $j$ and then again to the sum over $j'$, we get   
		\begin{align*}
		\|  \del^{1-s}[D, E_\omega ] f \|_{L^p(S_k)} 
		&\lesssim \sum_{j'\in\N, k' \in \Z} 2^{- (M - |s-1| - 2 ) (| k-k'|+|j'-k'|)  } 2^{sj'} \| \phi_{j'} \ast  f \|_{L^p (P_{k'})} 
		\\ &\lesssim \sum_{ k' \in \Z} 2^{- (M - |s-1| - 3 ) | k-k'|  } \sup_{j' \in \N} 2^{sj'} \| \phi_{j'} \ast  f \|_{L^p (P_{k'})}  
		\\ &\lesssim  \sum_{ k' \in \Z} 2^{- (M - |s-1| - 3 ) | k-k'|  } \| \sup_{j' \in \N} 2^{sj'} |\phi_{j'} \ast  f | \|_{L^p (P_{k'})}. 
		\end{align*} 
		
		Define sequences $u, v, w$ by 
		\[
		u[j] := \|  \del^{1-s}[D, E_\omega ] f \|_{L^p(S_j)}, \quad  
		v[j] := 2^{-( M-|s-1| -3 ) |j|}, \quad    
		w[j] := \| \sup_{l \in \N} 2^{s l} |\phi_{l} \ast  f | \|_{L^p (P_{j})}. 
		\]
		Then we have shown that $u \lesssim v \ast w $. By Young's inequality we get $\| u \|_{\ell^p} \leq \| v \|_{\ell^1} \| w \|_{\ell^p}$. Clearly $\| u \|_{\ell^p} \approx \| \del^{1-s} [D, E_\omega ] f \|_{L^p (\overline\omega^c)}$ and $ \| w \|_{\ell^p} = \|  f \|_{\ms{F}^{s}_{p, \infty}(\omega;\phi)}$ (see Remark \ref{commest_rem} \ref{commest_rem1}). By choosing $M$ sufficiently large so that $\| v \|_{\ell^1}<\infty$ we obtain the desired estimate \eqref{Eqn::commest_rem::StrongEst}. 
\end{proof}

For a bounded Lipschitz domain, let $E$ be an extension operator defined by  \eqref{E_def},
\[
     E f = \chi_0 f + \sum_{\nu = 1}^M \chi_{\nu} E_{\nu} (\chi_{\nu} f ), 
\]
where $E_{\nu} g = E_{\om_{\nu}} (g \circ \Phi_{\nu}) \circ \Phi_{\nu}^{-1}$. For our application we need to glue together the commutator estimates on each special Lipschitz domain. We now end the section with a lemma which will help us achieve this goal. 
\begin{lemma} \label{Enu_commest_le}Let $\Omega\subseteq\R^N$ be a bounded Lipschitz domain, and let $E_\nu$ be defined as above. Then for $s\in\R$ and $1<p<\infty$, we have 
\begin{equation*} 
  \| \del^{1-s}[D, E_{\nu}] (\chi_{\nu} f )  \|_{L^p (U_{\nu} \cap \overline\Omega^c)}  \leq C_{s,p}\| f \|_{H^{s,p} (\Om)},\quad\forall f\in H^{s,p}(\Omega). 
\end{equation*}
\end{lemma}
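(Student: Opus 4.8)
The plan is to reduce this estimate on the bounded Lipschitz domain to the model estimate \eqref{commest} on the special Lipschitz domain $\omega_\nu$, by transporting everything through the affine map $\Phi_\nu$. Write $\Phi_\nu(x)=A_\nu x+b_\nu$ with $A_\nu$ its invertible linear part. First I would set up the pullback of the data. Since $\supp\chi_\nu\subset\subset U_\nu$, the set $\Phi_\nu^{-1}(\supp\chi_\nu)$ is a compact subset of $\B^N$, so the function on $\omega_\nu$ obtained by restricting $\chi_\nu f$ to $\Omega\cap U_\nu$, composing with $\Phi_\nu$, and extending by $0$ outside $\B^N$ is well defined; call it $g_\nu$. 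Because multiplication by $\chi_\nu\in C_c^\infty$, restriction to an open subset, and composition with an invertible affine map are all bounded operations on $H^{s,p}(\R^N)$ which commute with restriction, one gets $g_\nu\in H^{s,p}(\omega_\nu)$ with $\|g_\nu\|_{H^{s,p}(\omega_\nu)}\le C_{s,p,\nu}\|f\|_{H^{s,p}(\Omega)}$, and by the definition of $E_\nu$ we have $E_\nu(\chi_\nu f)=E_{\omega_\nu}(g_\nu)\circ\Phi_\nu^{-1}$ (recall from \eqref{Eom_def} that $E_{\omega_\nu}$ depends only on the restriction to $\omega_\nu$).

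Next I would transform the commutator. Since $\Phi_\nu$ is affine with linear part $A_\nu$, the chain rule gives, componentwise, $D(G\circ\Phi_\nu^{-1})=A_\nu^{-\top}\bigl((DG)\circ\Phi_\nu^{-1}\bigr)$ and $(Dh)\circ\Phi_\nu=A_\nu^{-\top}D(h\circ\Phi_\nu)$, where $A_\nu^{-\top}$ acts on the vector of partial derivatives. Combining these with $E_\nu=(\,\cdot\,\circ\Phi_\nu^{-1})\circ E_{\omega_\nu}\circ(\,\cdot\,\circ\Phi_\nu)$ and the linearity of $E_{\omega_\nu}$ yields the operator identity $[D,E_\nu](\chi_\nu f)=A_\nu^{-\top}\Bigl(\bigl([D,E_{\omega_\nu}]g_\nu\bigr)\circ\Phi_\nu^{-1}\Bigr)$. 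Applying the bounded map $A_\nu^{-\top}$, then changing variables $x=\Phi_\nu(y)$ (Jacobian $|\det A_\nu|$) and using $\Phi_\nu^{-1}(U_\nu\cap\overline\Omega^c)=\B^N\cap\overline{\omega_\nu}^c$, the desired inequality is reduced to
\[
\bigl\|(\delta\circ\Phi_\nu)^{1-s}\,[D,E_{\omega_\nu}]g_\nu\bigr\|_{L^p(\B^N\cap\overline{\omega_\nu}^c)}\le C_{s,p,\nu}\|g_\nu\|_{H^{s,p}(\omega_\nu)}.
\]

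The last step is to compare the two weights. The geometric input is that $\Phi_\nu^{-1}(\Omega)\cap\B^N=\omega_\nu\cap\B^N$, hence $\Phi_\nu(b\omega_\nu\cap\B^N)=b\Omega\cap U_\nu$; together with the bi-Lipschitz bounds for $\Phi_\nu$ this is meant to give $\dist(\Phi_\nu(y),b\Omega)\approx_\nu\dist(y,b\omega_\nu)=:\delta_\nu(y)$ for $y\in\B^N\cap\overline{\omega_\nu}^c$ — the distance to $b\Omega$ realized inside $U_\nu$ is comparable to $\delta_\nu(y)$, while the portion of $b\Omega$ lying outside $U_\nu$ contributes a distance bounded below by $\dist(\Phi_\nu(y),\partial U_\nu)\gtrsim_\nu\dist(y,\partial\B^N)$. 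Making this comparison rigorous — in particular controlling points $y$ close to $\partial\B^N$, where one must keep track of where the nearest point of $b\Omega$ actually lies and use that $\supp\chi_\nu\subset\subset U_\nu$ — is the step I expect to be the main obstacle. Granting it, one replaces $(\delta\circ\Phi_\nu)^{1-s}$ by a constant multiple of $\delta_\nu^{1-s}$, enlarges the region of integration from $\B^N\cap\overline{\omega_\nu}^c$ to all of $\overline{\omega_\nu}^c$, and applies Theorem \ref{commest_thm} to the pair $(\omega_\nu,g_\nu)$ to obtain $\|\delta_\nu^{1-s}[D,E_{\omega_\nu}]g_\nu\|_{L^p(\overline{\omega_\nu}^c)}\lesssim\|g_\nu\|_{H^{s,p}(\omega_\nu)}\lesssim\|f\|_{H^{s,p}(\Omega)}$, which is the assertion of the lemma; all constants depend on $\nu$, but this is harmless since $\nu$ ranges over the finite set $\{1,\dots,M\}$.
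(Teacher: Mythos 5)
Your proposal is correct and follows essentially the same route as the paper: derive the operator identity $[D,E_\nu](\chi_\nu f)=\bigl([D,E_{\omega_\nu}]((\chi_\nu f)\circ\Phi_\nu)\bigr)\circ\Phi_\nu^{-1}\cdot D\Phi_\nu^{-1}$ using that $\Phi_\nu$ is affine, change variables to land on $\B^N\cap\overline{\omega_\nu}^c$, and invoke Theorem \ref{commest_thm} together with the boundedness of multiplication by $\chi_\nu$ and of affine pullback on $H^{s,p}$. The weight comparison $\dist(\Phi_\nu(y),b\Omega)\approx\dist(y,b\omega_\nu)$ that you single out as the delicate point is likewise the (tacitly used) ingredient in the paper's one-line passage from the $L^p(U_\nu\cap\overline\Omega^c)$ norm to the $L^p(\Phi_\nu^{-1}(U_\nu\cap\overline\Omega^c))$ norm, so your treatment is, if anything, more explicit.
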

\begin{proof}
We have
\begin{align*}
 [D, E_\nu] ( \chi_\nu f )
&= D (E_\nu (\chi_\nu f )) - E_\nu (D (\chi_\nu f )) 
\\ & = D( E_{\om_\nu} [ (\chi_\nu f) \circ \Phi_\nu] \circ \Phi_\nu^{-1})
- E_{\om_\nu} [D (\chi_\nu f) \circ \Phi_\nu ] \circ \Phi_\nu^{-1}
\\ &= (D  E_{\om_\nu}[(\chi_\nu f) \circ \Phi_\nu] ) \circ \Phi_\nu^{-1} \cdot D \Phi_\nu^{-1}  
- E_{\om_\nu}[D ( (\chi_\nu f) \circ \Phi_\nu)] \circ \Phi_\nu^{-1} \cdot D\Phi_\nu^{-1} 
\\ &\quad + E_{\om_\nu}[D ( (\chi_\nu f) \circ \Phi_\nu)] \circ \Phi_\nu^{-1} \cdot D\Phi_\nu^{-1} - E_{\om_\nu} [D (\chi_\nu f) \circ \Phi_\nu ] \circ \Phi_\nu^{-1}
\\ &= [D, E_{\om_\nu}] ( (\chi_\nu f) \circ \Phi_\nu) \circ \Phi_\nu^{-1} \cdot D\Phi_\nu^{-1}. 
\end{align*}

Note that in the last step we used the fact that $\Phi_{\nu}$ is a linear transformation so that 
\begin{align*} 
E_{\om_{\nu}}[D ( (\chi_{\nu} f) \circ \Phi_{\nu})] \circ \Phi_{\nu}^{-1} \cdot D\Phi_{\nu}^{-1} 
&= E_{\om_{\nu}} [  D (\chi_{\nu} f)  \circ \Phi_{\nu} ] \circ \Phi_{\nu}^{-1}  \cdot D \Phi_{\nu} \cdot D \Phi_{\nu}^{-1}
\\ &= E_{\om_{\nu}} [  D (\chi_{\nu} f)  \circ \Phi_{\nu} ] \circ \Phi_{\nu}^{-1}.  
\end{align*}
Applying \rp{commest_thm} to the domain, we have
\begin{align*} 
  \| \del^{1-s}[D, E_{\nu}] (\chi_{\nu} f )  \|_{L^p (U_{\nu} \cap \overline\Omega^c)}  
 &{\lesssim} \| \del^{1-s}[D, E_{\om_{\nu}}] (\chi_{\nu} f \circ \Phi_{\nu})  \|_{L^p (\Phi_{\nu}^{-1} (U_{\nu} \cap \overline\Omega^c))}  
 \\ &= \| \del^{1-s}[D, E_{\om_{\nu}}] (\chi_{\nu} f \circ \Phi_{\nu})  \|_{L^p (\B^{N} \cap \overline\om_{\nu}^c)}  
 \\& {\lesssim} \| \chi_{\nu} f \circ  \Phi_{\nu} \|_{H^{s,p} (\om_{\nu})} 
 \\& {\lesssim} \| f \|_{H^{s,p} (\Om)},  
\end{align*}
where we used that $U_\nu=\Phi_\nu(\B^N)$ and $\Phi_{\nu} (\B^N\cap \om_{\nu}) = U_{\nu} \cap \Om$. 
\end{proof}

\section{Hardy-Littlewood Lemma of Sobolev Type}  

In the last section we estimated $L^p(\Om, \la)$ norm using $H^{s,p}(\Om)$ norm, where the weight $\la$ is some power of the boundary distance function. To show that our solution for $\dbar$ is in $H^{s+\yh,p}(\Om)$, we also need to bound the $H^{s,p}(\Om)$ norms by weighted Sobolev norms $W^{k,p}(\Om, \la)$. We will call this kind of estimates Hardy-Littlewood lemma of Sobolev type, after the classical version for H\"older spaces. 

\begin{lemma}\label{wtibple}  
	Let $s > -\frac{1}{p}$ and $1 < p < \infty$. Then
	\begin{enumerate}[(i)]
	    \item\label{Item::wtibple::1}  There is a $C_{s,p}>0$ such that for all $v\in W^{1,p}_\loc(0,2)$ such that $v \equiv 0$ near $2$,
	\eq{ldwtibp}
	\int_0 ^2  t^{sp} | v (t) |^p  \, dt \leq C_{s,p} \int_0^2 t^{(s+1) p} |v'(t)|^p \, dt.  
	\eeq
	%\item Suppose $v \in W^{1,p}_{\loc} (0,1)$ and $v \equiv 0$ near $1$. Then Inequality \eqref{ldwtibp} holds for $v$.
	\item\label{Item::wtibple::2} Let $\om =\{x_N>\rho(x')\}\seq \R^N$ be a bounded special Lipschitz domain. Suppose $u \in W^{1,p}_{\loc}(\om)$ and $\supp u \subseteq \ov{\om} \cap \B^N$. Then 
\[ 
	\| \del^s u \|_{L^p (\om)} \leq   C_{s,p}' \| \del^{s+1} Du \|_{L^p (\om)}, 
\]
	where $\del (x) = \dist (x, b \om)$ and $C_{s,p}'>0$ is the constant that does not depend on $u$. 
	\end{enumerate}
\end{lemma}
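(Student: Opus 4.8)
The plan is to establish the one-dimensional inequality \ref{Item::wtibple::1} by a direct integration-by-parts argument (it is a weighted Hardy inequality), and then to deduce the $N$-dimensional statement \ref{Item::wtibple::2} from it by slicing $\omega$ along vertical lines.

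For \ref{Item::wtibple::1}, I would first note that there is nothing to prove unless the right-hand side of \eqref{ldwtibp} is finite, and then work on a truncated interval $(\varepsilon,2)$ with $\varepsilon>0$ to sidestep the endpoint $0$. On $[\varepsilon,2]$ the function $v$ has an absolutely continuous representative with $v(2)=0$ (since $v\in W^{1,p}_\loc(0,2)$ and $v\equiv0$ near $2$), so $\int_\varepsilon^2 t^{sp}|v|^p\,dt<\infty$; moreover $sp+1>0$ is exactly the hypothesis $s>-\frac1p$. Integrating $\int_\varepsilon^2 t^{sp}|v(t)|^p\,dt$ by parts with antiderivative $\frac{t^{sp+1}}{sp+1}$, discarding the nonpositive boundary term at $\varepsilon$, and using $\bigl|\tfrac{d}{dt}|v|^p\bigr|\le p|v|^{p-1}|v'|$, I get $\int_\varepsilon^2 t^{sp}|v|^p\,dt\le\tfrac{p}{sp+1}\int_\varepsilon^2 t^{s(p-1)}t^{s+1}|v|^{p-1}|v'|\,dt$. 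Hölder's inequality with exponents $\tfrac{p}{p-1}$ and $p$ bounds the right side by $\tfrac{p}{sp+1}\bigl(\int_\varepsilon^2 t^{sp}|v|^p\bigr)^{\frac{p-1}{p}}\bigl(\int_\varepsilon^2 t^{(s+1)p}|v'|^p\bigr)^{\frac1p}$; since the first factor is finite I absorb it to the left, and then let $\varepsilon\to0^+$ by monotone convergence, obtaining \eqref{ldwtibp} with $C_{s,p}=\bigl(\tfrac{p}{sp+1}\bigr)^p$.

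For \ref{Item::wtibple::2}, I would introduce the bi-Lipschitz change of variables $\Psi(x',t)=(x',\rho(x')+t)$, which maps $\R^{N-1}\times(0,\infty)$ onto $\omega$ with unit Jacobian, and record the elementary two-sided bound $\tfrac12(x_N-\rho(x'))\le\delta(x',x_N)\le x_N-\rho(x')$ on $\omega$, which follows from $b\omega$ being the graph of $\rho$ together with $\|\nabla\rho\|_{L^\infty}<\tfrac12$; thus $\delta\circ\Psi\approx t$ with absolute constants. Putting $v:=u\circ\Psi$, I would use that composition with a bi-Lipschitz map preserves $W^{1,p}_\loc$ and that $\partial_t v=(\partial_{x_N}u)\circ\Psi$, so $|\partial_t v|\le|Du|\circ\Psi$; also $\supp u\subseteq\overline{\B^N}$ and $|\rho|<\tfrac12$ force $v(x',t)=0$ for $t>\tfrac32$. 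By the absolute-continuity-on-lines characterization of Sobolev functions, for a.e.\ $x'$ the slice $v(x',\cdot)$ belongs to $W^{1,p}_\loc(0,2)$, vanishes near $2$, and has weak derivative $\partial_t v(x',\cdot)$. Assuming $\|\delta^{s+1}Du\|_{L^p(\omega)}<\infty$ (otherwise there is nothing to prove), Tonelli's theorem makes the right-hand slice integral finite for a.e.\ $x'$, so \ref{Item::wtibple::1} applies on each slice; integrating the resulting inequalities in $x'$, converting the two double integrals back to integrals over $\omega$ via $\Psi$, and using $\delta\circ\Psi\approx t$ twice, I arrive at $\|\delta^s u\|_{L^p(\omega)}\le C'_{s,p}\|\delta^{s+1}Du\|_{L^p(\omega)}$, which is the claim. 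I expect the only genuinely delicate point to be this slicing step — that after the Lipschitz change of variables the one-dimensional slices are honestly $W^{1,p}_\loc$ with the expected derivative — which is the standard absolute-continuity-on-lines property; everything else is bookkeeping with power weights, Hölder's inequality, and Tonelli's theorem.
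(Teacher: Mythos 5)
Your proposal is correct and follows essentially the same route as the paper: part \ref{Item::wtibple::1} by integration by parts with antiderivative $t^{sp+1}/(sp+1)$ on a truncated interval, discarding the nonpositive boundary term, applying H\"older, absorbing the finite factor, and letting the truncation parameter tend to $0$; part \ref{Item::wtibple::2} by the shear $(x',t)\mapsto(x',\rho(x')+t)$, the equivalence $\delta\approx x_N-\rho(x')$, and slicewise application of \ref{Item::wtibple::1}. Your treatment of the absolute-continuity-on-lines point and the finiteness needed for the absorption step is, if anything, slightly more careful than the paper's.
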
 
\begin{proof}
\ref{Item::wtibple::1} By assumption $v\in W^{1,1}_\loc(0,2)$ is locally absolutely continuous, hence $v(t)$ can be defined point-wise.  

Let $\ve$ be a small positive number. Using integration by parts we have
\begin{align*} 
\int_{\del}^2 t^{sp} | v(t) |^p  \, dt 
&= - \frac{\del^{sp+1}}{sp+1} + \int_{\del}^2 \frac{t^{sp+1}}{sp+1} p |v(t)|^{p-1} v'(t) \sign(v(t)) \, dt 
\\ & \leq \int_{\del}^2 \frac{t^{sp+1}}{sp+1} p | v(t) |^{p-1} |v'(t) |  \, dt 
\\ &\leq C_{s,p} \| t^{s (p-1)} |v|^{p-1}  \|_{L^{\frac{p}{p-1}} ([\del,2])} \| t^{s+1}  v' \|_{L^p([\del,2])} 
\\ &=C_{s,p}  \| t^s v \|_{L^p ([\del,2])}^{p-1} \| t^{s+1}  v ' \|_{L^p ([\del,2])}.
\end{align*}
Here $\sign x=\frac x{|x|}$ when $x\neq0$ and $\sign x=0$.

Note that the left-hand side of the above inequality is $\| t^s v \|_{L^p([\del,2])}^p$. Dividing by $\|t^s v\|_{L^p ([\del,2])}^{p-1}$ (which is finite) from both sides and taking the limit as $\del \to 0$ we get \eqref{ldwtibp}. 

\medskip\noindent
\ref{Item::wtibple::2}
By assumption $u$ vanishes outside $\Bb^N$, so 
\begin{align*} 
\| \del ^{s} u \|^p_{L^p( \om) } 
&\lesssim  \int_{|y'| < 1} \int_{y_N= \rho (y') }^{1} (y_N - \rho (y') )^{sp} |u (y' , y_N)  |^p \, dy_N \,  dy'  
\\ &= \int_{|y'| < 1} \int_{t =0}^{1 - \rho(y')} t ^{sp} |u (y' , t + \rho  ( y') )|^p  \, dt \,  dy'. 
\end{align*}
Set $\wti{u} (y', t) := u  (y', t+ \rho(y'))$. Then $\wti{u} (y',t)$ vanishes near $t= 1-\rho(y' )$.  Since $\sup |\rho|<1$, for every $y'\in\R^{N-1}$, we see that the map $t\mapsto u(y',t+\rho(y'))$ is supported in $[0,1-\rho(y'))$ and vanishes near $1-\rho(y')$. Since $1- \rho(y') < 2$, by part \ref{Item::wtibple::1} we have
\begin{align*} 
\| \del ^{s} u  \|_{L^p(\om ) } 
&\lesssim\int_{|y'| < 1} \int_{0}^{1 - \rho(y')} t ^{(s+1)p} |D_t u  (y' , t + \rho  ( y') )  |^p \, dt \,  dy'  
\\ &= \int_{|y'| < 1} \int_{\rho (y')}^{1} (y_N - \rho(y'))^{(s+1)p} |D_{y_N} u  (y' , y_N )  |^p \, d y_N \,  dy'  
\\ &\lesssim \| \del^{s+1} D  u \|_{L^p(\om ) }. 
\end{align*}    
This completes the proof.
\end{proof}

\pr{H-L_int}   
	Let $1 < p < \infty$ and $k, l$ be non-negative integers with $l < k$. Let $ \Om$ be a bounded Lipschitz domain in $\R^N$ and define $\del (x)$ to be the distance function to the boundary $b \Om$. If $u \in W^{k,p}_{\loc} (\Om)$ and 
	\[
	\sum_{|\gm| \leq k}\| \del^{k- l} D^{\gm} u \|_{L^p (\Om)}  < \infty, 
	\]
	then $u \in W^{l, p} (\Om)$. Furthermore, there exists a constant $C$ that does not depend on $u$ such that
\[
	\| u \|_{W^{l,p} (\Om)}  \leq C
	\sum_{|\gm| \leq k}\| \del^{k- l} D^{\gm} u \|_{L^p (\Om)} . 
\]
\epr
\begin{proof}

	For each $0 \leq i \leq l < k$ and each $|\all| = i$, we show that
	\eq{ider}
	\int_{\Om}  |D^{\all}  u |^{p}  \, dV  (x) \lesssim \sum_{|\gamma|\le k}\int_{\Om}  \del(x)^{p (k-l)} | D^\gamma u |^{ p} \, dV (x) . 
	\eeq
It suffices to show that for every non-negative integer $j$ and $1 < p < \infty$, one has 
	\eq{wtibp}
	\| \del ^{ j} v \|_{L^p (\Om)}
	\lesssim_{j,p,\Omega}  \| \del^{j +1} v \|_{L^p(\Om)} +  \| \del^{j+1} Dv \|_{L^p(\Om)}.
	\eeq   
Indeed, setting $v =D^{\all} u$ and using \eqref{wtibp} $(k-i)$ times we get
\begin{align*}
    \| D^{\all} u \|_{L^p( \Om)}  &\lesssim \|\delta D^{\all} u \|_{L^p (\Om)} 
   + \| \del   D D^{\all} u  \|_{L^p(\Om)}
   \\
   &\lesssim\dots\lesssim\sum_{|\gamma|\le k-i}\|\delta^{k-i} D^{\all+\gamma} u \|_{L^p (\Om)} 
   \\
   &\lesssim\sum_{|\gamma|\le k}\|\delta^{k-l} D^{\gamma} u \|_{L^p (\Om)}.
\end{align*}
	
	It remains to prove \eqref{wtibp}. Take a finite open cover $\{\U_{\nu} \}_{\nu=0}^M$ of $\Om$ such that $\U_0 \subset \subset \Om$ and $\bigcup_{\nu=1}^M \U_\nu \supset b\Om$. Let $\{\chi_\nu \}$ be a partition of unity such that $\chi_\nu \in C^{\infty}_{c}(\U_{\nu})$, $0 \leq \chi_\nu \leq 1 $, and $\sum_{\nu=0}^M \chi_\nu \equiv 1 $ in some neighborhood of $\Om$. 
	We can assume that for each $1\le \nu\le M$ there exists an invertible affine linear transformation  $\psi_\nu: \B^N \to \U_{\nu}$, where $\B^N$ is the unit ball in $\R^N$, such that
\begin{gather*}
\psi_\nu( \B^N \cap \om_{\nu}) =  \mc{U}_{\nu} \cap \Om, \quad 1 \leq {\nu} \leq M. 
\end{gather*}  
Here $ \om_{\nu} = \{ y_N > \rho_{\nu} (y')  \}$ are special Lipschitz domains. %for some Lipschitz function $\rho_i$. 
For $y\in\B^N\cap\omega_\nu$, $ \del \circ\psi_{\nu}(y) \approx \del_{\nu}(y) := y_N -  \rho_{\nu}  ( y')$, thus
\begin{align*}
	\| \del^{j} v \|_{L^p (\Om)} 
&\lesssim \| \chi_0\del^{j} v \|_{L^p ( \Om)} 
	+   \sum_{\nu=1}^M \| \chi_\nu \del^{j} v  \|_{L^p (\Om \cap \mc{U}_\nu )}    
 \\ &\lesssim \| \chi_0 v \|_{L^p ( \Om)}  + \sum_{\nu=1}^M \| \del_\nu ^{j} [(\chi_\nu  v)\circ \psi_\nu] \|_{L^{p} ( \om_\nu\cap\B^N) }.  
\end{align*}
Clearly $\|\chi_0 v\|_{L^p(\Omega)}\lesssim\|\delta^{j+1}\chi_0v\|_{L^p(\Omega)}\le \|\delta^{j+1}v\|_{L^p(\Omega)}$.  By \rl{wtibple} \ref{Item::wtibple::2}, we have for $1\le \nu\le M$,
\begin{align*} 
\| \del_\nu ^{j} [(\chi_\nu  v)\circ \psi_\nu] \|_{L^{p} ( \B^N \cap \om_\nu) } 
	&\lesssim \| \del_\nu ^{j +1}  D [(\chi_\nu  v)\circ \psi_\nu]  \|_{L^{p} ( \B^N \cap \om_\nu)} 
	\\
	&\lesssim\| \del_\nu ^{j +1}  D (\chi_\nu  v)\circ \psi_\nu  \|_{L^{p} ( \B^N \cap \om_\nu)} \\
	&\lesssim \| \del^{j+1} D (\chi_\nu  v) \|_{L^p(\U_\nu\cap\Om)}
	\\
	&\lesssim\| \del^{j+1} v \|_{L^p(\U_\nu\cap\Om)} + \| \del^{j+1} D v \|_{L^p (\U_\nu\cap\Om)}\\
	&\lesssim\| \del^{j+1} v \|_{L^p(\Om)} + \| \del^{j+1} D v \|_{L^p (\Om)}.
\end{align*}
Taking sum over $0\le\nu\le M$, this proves \eqref{wtibp} and thus the proposition. 
\end{proof} 
In fact, we can also view \rp{H-L_int} as a weighted version of the Poincar\'e inequality.  
	\begin{lemma}\label{C3_int}  
		Let $\Om$ be a bounded Lipschitz domain. Denote $\delta(x)=\dist(x,b\Omega)$. 
		Then for any $k\in\N$ and $1<p<\infty$ there is a $C=C_{k,p,\Omega}>0$ such that 
		\[ 
		\| \delta^{-k}f\|_{L^p(\Omega)}\leq C\| f \|_{H^{k,p}_0(\Omega)}.
		\]
	\end{lemma}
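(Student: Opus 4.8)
The plan is to use the definition of $H^{k,p}_0(\Omega)$ as the closure of $C_c^\infty(\Omega)$ in the $\|\cdot\|_{H^{k,p}(\R^N)}$ norm: it suffices to prove the inequality with a uniform constant for $f\in C_c^\infty(\Omega)$ and then pass to the limit. For such $f$ the argument mirrors the localize-and-flatten scheme of \rp{H-L_int}, the new input being a one-dimensional Hardy inequality with the singular weight $t^{-kp}$, which is available precisely because $f$ vanishes near $b\Omega$.

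The first step is to record the one-variable estimate: if $u\in C^\infty(0,\infty)$ is compactly supported in $(0,\infty)$, then for every integer $j\ge1$ and every $1<p<\infty$,
\[
\int_0^\infty t^{-jp}|u(t)|^p\,dt\le\Big(\tfrac{p}{jp-1}\Big)^{p}\int_0^\infty t^{-(j-1)p}|u'(t)|^p\,dt.
\]
This follows from the integration-by-parts computation in the proof of \rl{wtibple}\ref{Item::wtibple::1}; although the exponent $-jp$ lies in the regime $sp+1<0$ excluded there, the boundary term at $t=0$ now vanishes identically because $u\equiv0$ near $0$, so no sign restriction is needed. Iterating this bound with $j=k,k-1,\dots,1$, applied successively to $u^{(k-j)}$, yields $\int_0^\infty t^{-kp}|u|^p\,dt\lesssim_{k,p}\int_0^\infty|u^{(k)}(t)|^p\,dt$.

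Next I would localize as in \rp{H-L_int}: fix a finite cover $\{U_\nu\}_{\nu=0}^M$ of $\overline\Omega$ with $U_0\subset\subset\Omega$, a partition of unity $\{\chi_\nu\}$ with $\chi_\nu\in C_c^\infty(U_\nu)$ and $\sum_\nu\chi_\nu\equiv1$ near $\overline\Omega$, and invertible affine maps $\psi_\nu:\B^N\to U_\nu$ with $\psi_\nu(\B^N\cap\omega_\nu)=U_\nu\cap\Omega$, where $\omega_\nu=\{y_N>\rho_\nu(y')\}$. The interior piece is trivial since $\delta^{-k}$ is bounded on $\supp\chi_0$, so $\|\delta^{-k}\chi_0f\|_{L^p(\Omega)}\lesssim\|f\|_{L^p(\Omega)}$. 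For each boundary piece, using $\delta\circ\psi_\nu(y)\approx\delta_\nu(y):=y_N-\rho_\nu(y')$ on $\B^N\cap\omega_\nu$ and the affine change of variables, $\|\delta^{-k}\chi_\nu f\|_{L^p(\Omega\cap U_\nu)}\approx\|\delta_\nu^{-k}g_\nu\|_{L^p(\omega_\nu)}$ with $g_\nu:=(\chi_\nu f)\circ\psi_\nu\in C_c^\infty(\omega_\nu)$; then the substitution $y_N=t+\rho_\nu(y')$, Tonelli's theorem, the iterated one-variable Hardy inequality in $t$ for each $y'$, and reversing the substitution give $\|\delta_\nu^{-k}g_\nu\|_{L^p(\omega_\nu)}\lesssim\|\partial_{y_N}^{k}g_\nu\|_{L^p(\omega_\nu)}\le\|g_\nu\|_{W^{k,p}(\omega_\nu)}\lesssim\|\chi_\nu f\|_{W^{k,p}(\Omega)}\lesssim\|f\|_{W^{k,p}(\Omega)}$. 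Summing over $\nu$ and using $H^{k,p}(\R^N)=W^{k,p}(\R^N)$ (\rl{Lem::Hkp=Wkp}\ref{Item::Hkp=Wkp::Global}) together with $\|f\|_{W^{k,p}(\Omega)}=\|f\|_{W^{k,p}(\R^N)}$ for $f\in C_c^\infty(\Omega)$, we obtain $\|\delta^{-k}f\|_{L^p(\Omega)}\le C\|f\|_{H^{k,p}_0(\Omega)}$ for all $f\in C_c^\infty(\Omega)$.

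To finish, I would pass to the limit for general $f\in H^{k,p}_0(\Omega)$: choose $f_j\in C_c^\infty(\Omega)$ with $f_j\to f$ in $H^{k,p}(\R^N)$; applying the inequality to $f_j-f_l$ shows $(\delta^{-k}f_j)$ is Cauchy, hence convergent, in $L^p(\Omega)$, while along a subsequence $f_j\to f$ a.e.\ in $\Omega$ (convergence in $H^{k,p}$ forces convergence in $L^p$), so the $L^p$-limit must equal $\delta^{-k}f$, which gives $\delta^{-k}f\in L^p(\Omega)$ and the stated bound. The only genuinely delicate point --- and hence the main obstacle --- is that the estimate fails for general $W^{k,p}(\Omega)$ functions ($\delta^{-k}f$ need not even be locally integrable), so the reduction to $C_c^\infty(\Omega)$, where the singular-weight Hardy inequality is legitimate, is essential, and the limiting step must go through a.e.\ convergence along a subsequence rather than by pushing $\delta^{-k}$ through the limit directly; the localization and flattening are routine and parallel \rp{H-L_int}.
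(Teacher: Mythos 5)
Your proposal is correct and follows the same overall architecture as the paper's proof: reduce to $f\in C_c^\infty$ by density (with the limit identified via the Cauchy property of $\delta^{-k}f_j$ in $L^p$), localize and flatten to special Lipschitz pieces, and establish the bound slice-wise in the normal variable via Fubini. The only real difference is the one-dimensional ingredient: the paper proves $\|t^{-k}g\|_{L^p(\R_+)}\lesssim_{k,p}\|g^{(k)}\|_{L^p(\R_+)}$ in one step from Taylor's formula with integral remainder (estimating $\big\|\frac1t\int_0^t|g^{(k)}|\big\|_{L^p}$ by Minkowski's integral inequality after the substitution $s=\lambda t$), whereas you iterate the classical Hardy inequality $k$ times, correctly observing that the boundary term in the integration by parts of \rl{wtibple}\ref{Item::wtibple::1} vanishes in the singular regime $sp+1<0$ because $u\equiv0$ near $t=0$; both routes are standard and yield the same conclusion with explicit constants.
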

	\begin{proof}
	We only need to prove the statement for special Lipschitz domain $\omega=\{(x', x_N) \in \R^N: x_N>\rho(x')\}$ and for $f\in H^{k,p}_0(\omega)$ which is supported in $\B^N$, namely,
	\eq{C3int_red}
		\| \delta^{-k}f\|_{L^p(\omega)}\leq C\| f \|_{H^{k,p}_0(\omega)},\quad\forall f\in H_0^{k,p}(\omega),\quad\supp f \seq \B^N.
	\eeq 
	
	 For a bounded Lipschitz domain one can use partition of unity and the result for special Lipschitz domains. We leave the reader to check 
	the details. 
	
	%First suppose $\omega=\{(x', x_N) \in \R^N: x_N>\rho(x')\}$ is a special Lipschitz domain. 

	The case $k=0$ is trivial, so we assume $k>0$. Since $H_0^{k,p}(\omega)$ is the completion of $C_c^\infty(\omega)$ under the norm $H^{k,p}(\R^N)$ (see Definition \ref{Sob_dom_def} \ref{Item::Sob_dom_def::Domain}), it suffices to prove \re{C3int_red} for $f\in C_c^\infty(\omega)$ with uniform bounds. Indeed, for a general $f\in H^{k,p}(\om)$, take $(f_j)_{j=1}^\infty\subset C_c^\infty(\om)$ such that $\|f_j-f\|_{H^{k,p}_0(\omega)}\to0$, so then $(\delta^{-k}f_j)_{j=1}^\infty\subset L^p(\om)$ is a Cauchy sequence, and $\|\delta^{-k}f\|_{L^p(\om)}\le C\|f\|_{H^{k,p}_0(\om)}$ with the same constant.
	
	Since $\|\rho\|_{C^{0,1}}<1$, we know that $\frac12\delta(x) \leq |x_N-\rho(x')|\leq 2\delta(x)$ for all $x\in\R^N$, so we can replace $\delta(x)$ by $|x_N -\rho(x')|$.
	
	Let $g(t)\in C_c^\infty(0,2)$. By Taylor's theorem
\[  
	g(t)=\frac1{(k-1)!}\int_0^t g^{(k)}(s)(t-s)^{k-1}ds, \quad t>0.   
\] 
	Therefore
	\begin{align*}
	\|t^{-k}g(t)\|_{L^p_t(\R_+)}
	&\leq \frac1{(k-1)!}\Big\|\frac1t\int_0^t|g^{(k)}(s)|ds\Big\|_{L^p_t(\R_+)}
	\\&= \frac1{(k-1)!}\Big\|\int_0^2|g^{(k)}(\lambda t)|d\lambda\Big\|_{L^p(\R_+)}
	\\ &\leq \frac1{(k-1)!}\int_0^2\|g^{(k)}(\lambda \cdot)\|_{L^p(\R_+)}d\lambda
	\\&=\frac1{(k-1)!}\int_0^2\|g^{(k)}\|_{L^p(\R_+)}\lambda^{-\frac1p}d\lambda
	\\&=\frac{p/(p-1)}{(k-1)!}\|g^{(k)}\|_{L^p(\R_+)}.
	\end{align*}
	Now for each $x'\in\R^{N-1}$, set $g_{x'}(t):=f(x',t+\rho(x'))$ so $g_{x'}^{(k)}(t)=\partial_t^kf(x',t+\rho(x'))=(\partial_{x_N}^k f)(x',t+\rho(x'))$, we see that $\supp g \seq [0,2)$ since $\supp f \seq \B^N$. By Fubini theorem we have
	\begin{align*}
	 \int_\omega\left||x_N -\rho(x')|^{-k}f(x', x_N)\right|^p \, dV(x)
	&=\int_{\R^{N-1}}dx'\int_0^\infty |t^{-k}f(x',t+\rho(x'))|^pdt
	\\&=\int_{\R^{N-1}}\|t^{-k}g_{x'}(t)\|_{L^p_t(\R_+)}^pdx'
	\\ &\leq C_{k,p} \int_{\R^{N-1}}\|g_{x'}^{(k)}\|_{L^p(\R_+)}^pdx'
	\\ &= C_{k,p}  \int_{\R^{N-1}} \left( \int_{\R_+}|(\partial_{x_N}^kf)(x', t+\rho(x'))|^pdt \right) \, dx'
	\\&= C_{k,p} \int_\omega|\partial_{x_N}^kf(x)|^pdx \leq C_{k,p} \int_\omega|D^kf|^p \, dV(x). 
	\end{align*}
	Thus we have $\|\delta^{-k}f\|_{L^p(\omega)}\lesssim\|D^kf\|_{L^p(\omega)}\leq \|f\|_{W^{k,p}(\omega)}=\|f\|_{W^{k,p}(\R^N)}$ uniformly for all $f\in C_c^\infty(\omega)$.
	
	Note that by \rl{Lem::Hkp=Wkp},  $H^{k,p}(\R^N)=W^{k,p}(\R^N)$ with equivalent norm, and by density of $C_c^\infty(\omega)$ in $H^{k,p}_0(\omega)$ (see \rd{Sob_dom_def} \ref{Item::Sob_dom_def::Domain}) we conclude that $\|\delta^{-k}f\|_{L^p(\omega)}\lesssim\|f\|_{H^{k,p}(\R^N)}$ for all $f\in H_0^{k,p}(\omega)$. 
	\end{proof}

\pr{H-L_neg} 
	Let $\Om \seq\R^N$ be a bounded Lipschitz domain. Denote $\delta(x)=\dist(x, b \Om)$. Then for $s\geq 0$ and $1<p<\infty$, there is a $C=C(s,p,\Om)>0$ such that
	\begin{equation}\label{negindest}  
	\|u\|_{H^{-s,p}(\Om)}\leq C\|\delta^su\|_{L^p(\Om)},\quad\forall u\in L^p_{\loc}(\Om) .
	\end{equation} 
\epr

\begin{rem}
   For the special case when $p=2$, $s\ge0$ is not a half integer, and $\Omega$ has smooth boundary, the above result is proved in \cite[Theorem C.4]{C-S01}. 
\end{rem}

	\begin{proof}

	Note that the estimate is equivalent to showing the boundedness of the inclusion operator 
	\[
	\iota: L^p(\Om,\delta^{s}) \hra H^{-s,p}(\Om)
	\]
	for $s\ge0$ and $1<p<\infty$. We will argue by duality and interpolation.
	
	Let $p'$ be the conjugate of $p$. By \rp{C3_int}, for $k\in\N$,
	\[ 
	  \|g\|_{L^{p'}(\Omega,\delta^{-k})}\le C_{k,p'}\|g\|_{H_0^{k,p'}(\Omega)},\quad \forall g\in H_0^{k,p'}(\Omega),
	\]     
	is a bounded operator. By Proposition \ref{Prop::DualSpace} \ref{Item::DualSpace::H0Char}, we have $H^{-k,p}(\Om)=H_0^{k,p'}(\Om)'$. Using H\"older's inequality, we have for every $f\in L^p(\Om,\delta^{k})$,
	\begin{align*}
	\|f\|_{H^{-k,p}(\Om)}
	& = \sup_{\substack{g\in H^{k,p'}_0(\Om); \|g\|_{H^{k,p'}_0(\Om)}\leq 1}}\langle f,g \rangle 
	\\&\leq \sup_{\substack{g\in L^{p'}(\Om,\delta^{-k});\|g\|_{L^{p'}(\Om,\delta^{-k})}\leq C_{k,p'}}}\int_{\Om} |fg|
	\\ &= \sup_{\|\delta^{-k}g\|_{L^{p'}(\Om)}\leq C_{k,p'}} 
	\int_{\Om} |\delta^{k} f||\delta^{-k}g|
	\\& \leq \sup_{\|\delta^{-k}g\|_{L^{p'}(\Om)}\leq C_{k,p'}} \| \del^k f \|_{L^p (\Om)} \| \del^{-k} g \|_{L^{p'}(\Om)}
	\\&\leq  C_{k,p' } \| \del^k f \|_{L^p (\Om)}  
	=C_{k,p'} \|f \|_{L^p(\Om,\delta^{k})}. 
	\end{align*}
	Hence the inclusion $\iota:L^p(\Om,\delta^{k})\hookrightarrow H^{-k,p}(\Om)$ is bounded for $k \in \N$. 
	
	For general $s>0$, take any integer $k>s$ and denote $\theta=s/k$. By \rp{interpol_space} we have
	\[ 
	[L^p(\Om),L^p(\Om,\delta^{k})]_\theta=L^p(\Om,\delta^{s}), 
	\]
and  
\[ 
	[L^p(\Om),H^{-k,p}(\Om)]_\theta=H^{-s,p}(\Om).  
\]
Using interpolation we obtain the boundedness of inclusion $\iota: L^p(\Om,\delta^{s})\to H^{-s,p}(\Om)$. 
\end{proof}
	
	We now use \rp{H-L_neg} to extend \rl{C3_int} to all $s>0$.  
	
	\pr{C3}  
	Let $\Om$ be a bounded Lipschitz domain. Denote $\delta(x)=\dist(x,b\Omega)$. 
	Then for any $s \geq 0 $ and $1<p<\infty$ there is a $C=C_{s,p,\Omega}>0$ such that 
	$\| \delta^{-s}f\|_{L^p(\Omega)}\leq C  \|f\|_{H^{s,p}_0(\Om)}$.
	\epr 
	\begin{proof}
		Since the dual space of $L^p(\Om, \del^{-s})$ is $L^{p'}(\Om, \del^{s})$, we have
	\begin{align*}
	\| \del^{-s} f \|_{L^p (\Om)} 
	= \sup_{\substack{g \in L^{p'} (\Om, \del^{s}), \\ \| g \|_{L^{p'} (\Om, \del^{s})} \leq 1 } } \left|  \left<  f, g \right> \right| . 
	\end{align*}
	Since $L^{p'} (\Om, \del^{s}) \subset H^{-s,p'}(\Om)$ by \rp{H-L_neg} and since $H^{s,p}_0 (\Om) = H^{-s,p'} ( \Om )'$ from Proposition \ref{Prop::DualSpace} \ref{Item::DualSpace::H0Char}, we have
	\[
  	\| \del^{-s} f \|_{L^p (\Om)}  \leq	\sup_{\substack{g \in H^{-s,p'}(\Om), \\ \| g \|_{H^{-s,p' }(\Om) \leq C } } } \left|  \left<  f, g \right>  \right| 
	\approx\| f \|_{H^{s,p}_0 (\Om)}, 
	\]
	which completes the proof.
	\end{proof}
	
	\pr{H-L_lo} 
	Let $\Om \subseteq \R^N$ be a bounded domain with $C^2$ boundary. Then for $1 < p < \infty$ and $0 \le r \le 2$, there is a $C = C_{\Om, r, p}>0$ such that
	\begin{enumerate}[(i)]
	    \item\label{Item::H-L_lo::1} $ \| f \|_{H^{r,p} (\Om)} \leq C( \| \del^{1-r} f  \|_{L^p (\Om)} + \| \del^{1-r} Df \|_{L^p (\Om)})$ for $0\le r\le 1$ and $f\in W^{1,p}(\Omega,\delta^{1-r})$.
	    \item\label{Item::H-L_lo::2} $\| f  \|_{H^{r,p}(\Om)} \leq  C(\| \del^{2-r} f \|_{L^p(\Om)} + \| \del^{2-r} Df \|_{L^p(\Om)} + \| \del^{2-r} D^2 f \|_{L^p (\Om)})$ for $1\le r\le 2$  and $f\in W^{2,p}(\Omega,\delta^{2-r})$.
	\end{enumerate}
	\epr 
	
\begin{proof} 
			\ref{Item::H-L_lo::1} Since $b \Om$ is $C^2$ and $ 0 \leq s\leq 1$, we can apply \rp{Prop::EqvNormC2} \ref{Item::H-L_lo::1} to get 
			\[   
			\|f\|_{H^{s,p}(\Om)}\lesssim\|f\|_{H^{s-1,p}(\Om)}+\|Df\|_{H^{s-1,p}(\Om )}. 
			\]  
			Now $s-1\leq 0$ so \rp{H-L_neg} applies, and we have
			\begin{gather*}
			\|f\|_{H^{s-1,p}(\Om)}\lesssim\|\delta^{1-s}f\|_{L^p(\Om)} ; 
			\\ 
			\|Df\|_{H^{s-1,p}(\Om)}\lesssim\|\delta^{1-s}Df\|_{L^p(\Om )}. 
			\end{gather*}
			Combining them we get $\|f\|_{H^{s,p}(\Om)}\lesssim\|\delta^{1-s}f\|_{L^p(\Om)}+\|\delta^{1-s}Df\|_{L^p(\Om)}$, which proves \ref{Item::H-L_lo::1}. 
			
			\medskip\noindent
			\ref{Item::H-L_lo::2} Since $1\leq s\leq 2$ we have $s,s-1\in[0,2]$. So by \rp{Prop::EqvNormC2} \ref{Item::EqvNormC2::12}, 
			\begin{align*}
			\|f\|_{H^{s,p}(\Om)} 
			\lesssim \|f\|_{H^{s-2,p}(\Om )}+\|Df\|_{H^{s-2,p}(\Om)}+\|D^2f\|_{H^{s-2,p}(\Om )}.  
			\end{align*}
		Since $s-2\leq 0$, we again apply \eqref{H-L_neg} to get 
			\[
			\|D^jf\|_{H^{s-2,p}(\Om)}\lesssim\|\delta^{2-s}D^jf\|_{L^p(\Om)}, \quad j = 0, 1, 2. 
			\]
		Thus 
			\[
			\|f\|_{H^{s,p}(\Om)}\lesssim\|\delta^{2-s}f\|_{L^p(\Om)}+\|\delta^{2-s}Df\|_{L^p(\Om)}+\|\delta^{2-s}D^2f\|_{L^p(\Om )}, 
			\]
	which proves \ref{Item::H-L_lo::2}. 
	\end{proof}
	
	\section{Sobolev Estimates of Homotopy Operators}  
In this section we derive the weighted estimates for the homotopy operator. Together with the commutator estimate and Hardy-Littlewood lemma, this leads to the proof of \rt{thm1_intro}. Unlike in \cite{Gong19} and \cite{DG19}, no integration by parts is used in our proof.  

In what follows we let $\rho$ be a $C^2$ defining function of $\Om$ which is strictly plurisubharmonic in a neighborhood of $b\Om$. We will adopt the following notation:
\begin{gather*}
   \Om_{\ve} = \{ z \in \C^n: \dist(z, \Om) < \ve \} ,    
   \quad \Om_{-\ve} = \{ z \in b\Om: \dist(z, \Om) > \ve \}. 
\end{gather*}

	\pr{hf_C1} 
	Let $\Om$ be a bounded domain in $\C^n$ with $C^2$ boundary. Suppose $W (z, \zeta) \in C^1 (\Om_{\ve} \times (\Om_{\ve} \sm\overline \Om_{-\ve}))$ is a Leray mapping, that is $W$ is holomorphic in $z \in \Om_{\ve}$ and satisfies 
	\[ 
	\Phi(z, \zeta)  := W(z, \zeta) \cdot (\zeta -z) \neq 0,  \quad z \in \Om, \quad \zeta \in \Om_{\ve} \sm\overline \Om.
	\] 
	Let $\mc{U}$ be a bounded neighborhood of $ \ov{\Om}$ such that $U \subset \Om_{\ve}$. 
    Suppose $\var$ is a $(0,q)$-form with $1 \leq q \leq n$ such that $\var$ and $\dbar \var$ are in $C^1 (\ov{\Om})$. Then
	\[ 
	\var = \dbar \Hc_q \var + \Hc_{q+1} \dbar \var . 
	\] 
Here $\Hc_q$ is the operator defined by
\eq{hom_opt} 
  \Hc_q \var = \int_{\mc{U}} K^0_{0, q-1} \we E\var + \int_{\mc{U} \sm\overline \Om}  K^{01}_{0, q-1} \we [\dbar, E] \var,  
\eeq
	where $E$ is any extension operator that maps $C^{\infty}(\ov{\Om} ) $ to $C^{\infty} (\C^n)$ with $\supp E\var \seq \U$ for all $\var$, and  
	\[   
	K^{0} (z, \zeta) = \frac{1}{(2 \pi i)^{n}} \frac{\left<\ov{\zeta} - \ov{z} \, , \, d \zeta \right>}{|\zeta -z |^{2}} \we \left( \dbar_{\zeta,z} \frac{ \left< \ov{\zeta} - \ov{z} \, , \, d \zeta \right>}{|\zeta -z|^{2}} \right)^{n-1}, \quad \dbar_{\zeta,z} = \dbar_{\zeta} + \dbar_{z} ; 
	\] 
	\begin{align} \label{Om01}
	& K^{0, 1} (z, \zeta) 
	= \frac{1}{(2 \pi i)^{n}} \frac{\left<\ov{\zeta} - \ov{z} \, , \, d \zeta \right>}{|\zeta -z |^{2}} \we \frac{\left< W, d \zeta  \right>}{\left< W \, , \, \zeta -z \right>} 
	\\ \nonumber & \qquad \we \sum_{i+j=n-2} \left[ \frac{\left< d\ov{\zeta} - d\ov{z} \, , \, d \zeta \right>}{|\zeta -z |^{2}} \right]^{i}   \we \left[ \dbar_{\zeta,z} \frac{ \left< W  ,  d \zeta  \right>}{\left< W, \zeta -z \right>} \right]^{j}.
	\end{align}
	We set $K^{1}_{0,-1} = 0$ and $K^{0,1}_{0,-1} =0$.  
	
	\epr  
The reader can find the proof of \rp{hf_C1} in \cite{Gong19}. 
Here we note that on any bounded strictly pseudoconvex domain $\Om$ with $C^2$ boundary, there exists an $\ve>0$ such that $W$ satisfies the assumptions in \rp{hf_C1} on $\Om_{\ve} \times (\Om_{\ve} \sm \ov{\Om}_{-\ve})$. Furthermore, near every $\zeta^{\ast} \in b\Om $, one can find a neighborhood $\mc{V}$ of $\zeta^{\ast}$ such that for all $z \in \V$, there exists a coordinate map $\phi_{z}: \mc{V} \to \R^{2n}$ given by $\phi_{z}: \zeta \in \V \to (s, t) = (s_{1}, s_{2}, t_{3},\dots, t_{2n})$, where $s_1 = \rho(\zeta)$ (we have $s_1 \approx \del(\zeta)$ for $\zeta\in\V\sm\Omega$). 
Moreover for $z \in \V \cap \Om$, $\zeta \in \V \sm\overline \Om$, $\Phi  = W(z,\zeta) \cdot (\zeta -z)$ satisfies 
\eq{Phie1}
  |\Phi (z, \zeta) | \geq c \left( \del(z) + s_{1} + |s_{2}| + |t|^{2} \right),  \quad \del(z) = \dist(z, b\Om), 
\eeq
	\eq{Phie2}
	|\Phi(z, \zeta)| \geq c |z - \zeta|^{2}, \quad \quad |\zeta -z| \geq c( s_1 + |s_2| + |t|), 
	\eeq
	for some constant $c$ depending on the domain. The reader can refer to \cite{Gong19} for details.   

From now on we shall fix an open set $\U$ such that $ \Om \subset \subset \U \subset \subset \Om_{\ve} $, and we will use our extension operator $\Ec$ defined by formula \eqref{E_def} with $\supp \Ec\var \subset \U$ for all $\var$.

\begin{thm}\label{H_thm}  
  Let $\Omega\seq\C^n$ be a bounded strictly pseudoconvex domain with $C^2$ boundary. Given $1 \leq q \leq n$, let $\Hc_q$ be defined as in \eqref{hom_opt}, where the extension operator $\Ec$ is given by formula \eqref{E_def}. Then for any $1<p<\infty$ and $s>\frac1p $, $\Hc_q$ is a bounded linear operator $\Hc_q:H^{s,p}_{(0,q)}(\Omega)\to H^{s+\frac12,p}_{(0,q-1)}(\Omega)$.
\end{thm}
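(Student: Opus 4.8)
The plan is to write $\Hc_q=\Hc_q^{(1)}+\Hc_q^{(2)}$ according to the two integrals in \eqref{hom_opt},
\[
  \Hc_q^{(1)}\var=\int_{\U}K^{0}_{0,q-1}\we\Ec\var,\qquad
  \Hc_q^{(2)}\var=\int_{\U\sm\ov\Om}K^{0,1}_{0,q-1}\we[\dbar,\Ec]\var ,
\]
and to estimate the two pieces separately. For $\var\in H^{s,p}_{(0,q)}(\Om)$ with $s>1/p$ both integrals are well defined: $\Ec\var\in H^{s,p}(\C^n)$ is compactly supported in $\U$ by \rp{Prop::ext_op} and the construction \eqref{E_def}, while $[\dbar,\Ec]\var$ is an $L^1_\loc$ form supported in $\ov\U\sm\Om$ --- it vanishes on $\Om$ since $\Ec\var|_\Om=\var$, and it is $L^1_\loc$ by \rrem{commest_rem} \ref{commest_rem3} applied on the special Lipschitz patches underlying $\Ec$.

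For $\Hc_q^{(1)}$ there is nothing subtle: the coefficients of the Bochner--Martinelli--Koppelman kernel $K^{0}$ are $O(|\zeta-z|^{1-2n})$ on $\C^n=\R^{2n}$, so $f\mapsto\int_{\R^{2n}}K^{0}(\cdot,\zeta)\we f(\zeta)$ is a classical operator of order $-1$ --- its kernel is homogeneous of degree $1-2n$, hence has Fourier multiplier homogeneous of degree $-1$ --- and maps $H^{s,p}(\R^{2n})\to H^{s+1,p}(\R^{2n})$ for all $s\in\R$, $1<p<\infty$. Composing with the bounded extension $\Ec\colon H^{s,p}(\Om)\to H^{s,p}(\C^n)$ and restricting to $\Om$ gives $\Hc_q^{(1)}\colon H^{s,p}_{(0,q)}(\Om)\to H^{s+1,p}_{(0,q-1)}(\Om)\hra H^{s+\yh,p}_{(0,q-1)}(\Om)$, which is more than enough.

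The substance lies in $\Hc_q^{(2)}$. First, \rl{Enu_commest_le} together with the construction \eqref{E_def} of $\Ec$ --- the lower-order contributions of the cut-off derivatives $\dbar\chi_\nu$ being absorbed by commutator estimates of the same flavour and by Hardy inequalities such as \rp{C3} --- yields, with $\del(z)=\dist(z,b\Om)$,
\[
  \|\del^{1-s}[\dbar,\Ec]\var\|_{L^p(\U\sm\ov\Om)}\ls\|\var\|_{H^{s,p}(\Om)}.
\]
Hence it suffices to show that $Tg(z):=\int_{\U\sm\ov\Om}K^{0,1}(z,\zeta)\we g(\zeta)$ maps $L^p(\U\sm\ov\Om,\del^{1-s})$ boundedly into $H^{s+\yh,p}(\Om)$. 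For $s\le\frac32$, so $r:=s+\yh\le2$, I would establish the weighted estimates
\[
  \big\|\del^{\,2-r}D_z^{j}Tg\big\|_{L^p(\Om)}\ls\|\del^{1-s}g\|_{L^p(\U\sm\ov\Om)},\qquad j=0,1,2,
\]
together with the analogue with $\del^{\,1-r}$ and $j\le1$ in the range $r\le1$, and then conclude directly from the Hardy--Littlewood lemma of Sobolev type \rp{H-L_lo} applied with exactly these exponents. The gain of $\yh$ derivative is encoded in the gap of $\yh$ between the weight exponent $2-r$ on the output $D_z^jTg$ and the exponent $1-s$ on the input $g$; the nontrivial feature is that this output exponent does \emph{not} deteriorate as $j$ increases, despite $D_z^jK^{0,1}$ being more singular, because the lower bound $|\Phi(z,\zeta)|\gtrsim\del(z)+s_1+|s_2|+|t|^2$ from \eqref{Phie1} provides the needed extra powers of $\del(z)$ --- this is the ``weight factor'' referred to in the introduction. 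Concretely, one localizes near $b\Om$, passes to the coordinates $\phi_z\colon\zeta\mapsto(s_1,s_2,t)$, bounds each term of $K^{0,1}$ and of its $z$-derivatives by means of \eqref{Phie1}--\eqref{Phie2}, splitting the $\zeta$-integral according to whether $|\zeta-z|^2$ or $\del(z)+s_1+|s_2|+|t|^2$ dominates $|\Phi|$, and then uses Schur's test / a Young-type inequality adapted to this anisotropic geometry. For $s>\frac32$ (so $r>2$), \rp{H-L_lo} is unavailable; instead I would prove the weighted bounds for $D_z^jTg$ up to order $j=\lceil r\rceil$, feed them into \rp{H-L_int} to obtain integer-order Sobolev regularity of $\Hc_q^{(2)}\var$, and recover the fractional index $r$ by complex interpolation between two consecutive integer endpoints via \rp{interpol_space} \ref{Item::cinterpolsp::Hsp} and \rp{cinterpol}.

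The step I expect to be the main obstacle is precisely the family of weighted derivative estimates $\|\del^{2-r}D_z^{j}Tg\|_{L^p(\Om)}\ls\|\del^{1-s}g\|_{L^p(\U\sm\ov\Om)}$ for $j\ge1$: a crude pointwise bound $|D_z^jK^{0,1}|\ls|K^{0,1}|\,\del(z)^{-j}$ loses too much, so one is forced to exploit the finer, anisotropic decay of the Cauchy--Fantappiè kernel recorded in \eqref{Phie1}--\eqref{Phie2} and to arrange the $L^p$ integration so that the compensating power $\del(z)^{2-r}$ is used to full effect. Once \rt{H_thm} is in hand, \rt{thm1_intro} follows by applying \rp{hf_C1} to a smooth approximation of $\var$ and passing to the limit using the estimates just obtained.
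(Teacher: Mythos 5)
Your proposal is correct and follows essentially the same route as the paper: the same splitting $\Hc_q=\Hc_q^0+\Hc_q^1$, the commutator estimate $\|\del^{1-s}[\dbar,\Ec]\var\|_{L^p(\U\sm\ov\Om)}\ls\|\var\|_{H^{s,p}(\Om)}$ glued from Lemma \ref{Enu_commest_le} and Proposition \ref{C3}, the weighted derivative bounds $\|\del^{m-s-\yh}D^j\Hc_q^1\var\|_{L^p(\Om)}$ for all $j\le m$ obtained from \eqref{Phie1}--\eqref{Phie2} and the anisotropic integral Lemma \ref{intestle}, followed by the Hardy--Littlewood lemmas \rp{H-L_lo}/\rp{H-L_int} and complex interpolation for the remaining indices. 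The only cosmetic difference is your direct Fourier-multiplier justification for the order-$(-1)$ gain of $\Hc_q^0$, where the paper instead cites the integer case from \cite{DG19} and interpolates.
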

\begin{rem} 
   In fact, when $q=n$, then any extension of $\var$ is automatically $\dbar$ closed, so 
   \[ 
     \Hc_n \var = \int_{\U} K^0_{0,q-1} \we E \var. 
   \] 
  In this case for all $s \ge 0$ and $1 < p < \infty$, $\Hc_n:H^{s,p}_{(0,n)}(\Omega)\to H^{s+1,p}_{(0,n-1)}(\Omega)$ where $\Om$ is any bounded Lipschitz domain. See Proposition \ref{H0_prop}.
\end{rem}		

\rt{H_thm} allows us to prove a homotopy formula under much weaker regularity assumption. 
		
\begin{thm}[Homotopy formula] 
Let $\Omega\seq\C^n$ be a bounded strictly pseudoconvex domain with $C^2$ boundary. Given $1<p<\infty$ and $1 \leq q \leq n$, 
suppose $\varphi \in H^{s,p}_{(0,q)}(\Omega)$ satisfies $\dbar\varphi\in H^{s,p}_{(0,q+1)}(\Omega)$ where $s> \frac1p$. 
Let $\Hc_q$ be defined by \eqref{hom_opt}, where the extension operator $E=\Ec_\Omega$ is given by formula \eqref{E_def}. Then the following homotopy formula holds in the sense of distributions: 
	\begin{equation}\label{hf_dist} 
	\varphi=\dbar \Hc_q\varphi+\Hc_{q+1}\dbar\varphi.
	\end{equation}
In particular for a $\dbar$-closed $\var$ which is in $H^{s,p}_{(0,q)}(\Om)$ for $s > \frac{1}{p}$, the equation $\dbar \Hc_q \var =\varphi$ holds and $\Hc_q \var \in H^{s+\yh, p}_{(0,q-1)} (\Om)$. 
\end{thm}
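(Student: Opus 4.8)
The strategy is to prove the homotopy formula first for forms that are $C^1$ up to $b\Omega$, where it is exactly \rp{hf_C1}, and then to pass to the general case by density, using \rt{H_thm} to guarantee that both sides of the formula are continuous in the relevant topology. For the continuity: by \rt{H_thm} the operator $\Hc_q\colon H^{s,p}_{(0,q)}(\Omega)\to H^{s+\yh,p}_{(0,q-1)}(\Omega)$ is bounded, and for $q\le n-1$ so is $\Hc_{q+1}\colon H^{s,p}_{(0,q+1)}(\Omega)\to H^{s+\yh,p}_{(0,q)}(\Omega)$ (when $q=n$ there are no $(0,n+1)$-forms, $\dbar\varphi\equiv0$, and $\Hc_{q+1}\dbar\varphi=0$). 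Since $\dbar\colon H^{s+\yh,p}(\Omega)\to H^{s-\yh,p}(\Omega)$ is bounded and $H^{s,p}(\Omega)\hookrightarrow H^{s-\yh,p}(\Omega)$, the map $(\varphi,\psi)\mapsto\varphi-\dbar\Hc_q\varphi-\Hc_{q+1}\psi$ is continuous from $X:=\{\varphi\in H^{s,p}_{(0,q)}(\Omega):\dbar\varphi\in H^{s,p}_{(0,q+1)}(\Omega)\}$, equipped with the graph norm $\|\varphi\|_{H^{s,p}(\Omega)}+\|\dbar\varphi\|_{H^{s,p}(\Omega)}$, into $H^{s-\yh,p}_{(0,q)}(\Omega)$. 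So it suffices to prove the formula on a dense subset of $X$ consisting of forms to which \rp{hf_C1} applies.

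\textbf{Density step.} I would show that any $\varphi\in X$ is the limit, in the graph norm, of $(0,q)$-forms $\varphi_j$ that are $C^\infty$ on neighborhoods of $\overline\Omega$. Fix a partition of unity $1=\sum_{\nu=0}^M\chi_\nu$ near $\overline\Omega$ with $\chi_0\in C^\infty_c(\Omega)$ and, for $\nu\ge1$, $\chi_\nu$ supported in a boundary chart in which, after a rigid motion, $\Omega$ is $\{x_{2n}>\rho_\nu(x')\}$ with $\|\nabla\rho_\nu\|_{L^\infty}<\yh$. For $\nu=0$ one mollifies $\chi_0\varphi$ directly. For $\nu\ge1$ one first translates $\chi_\nu\varphi$ inward, replacing it by $x\mapsto(\chi_\nu\Ec\varphi)(x+\epsilon e_{2n})$ (the global extension $\Ec\varphi\in H^{s,p}(\C^n)$ makes the translate meaningful), which is a form defined on $\{x_{2n}>\rho_\nu(x')-\epsilon\}$, a neighborhood of $\overline\Omega$ in the chart, and then mollifies at scale $\eta\ll\epsilon$. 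The decisive point is that $\dbar$ is a constant-coefficient operator, hence commutes both with translations and with mollification; therefore $\dbar$ of each approximant of $\chi_\nu\varphi$ is the same translate-and-mollify of $\dbar(\chi_\nu\varphi)=\dbar\chi_\nu\wedge\varphi+\chi_\nu\dbar\varphi$, which lies in $H^{s,p}(\Omega)$. Continuity of translations and of mollification on $H^{s,p}(\R^{2n})$ (valid for $1<p<\infty$), together with a diagonal argument in $(\epsilon,\eta)$, then yields $\varphi_j:=\sum_{\nu}(\text{approximant of }\chi_\nu\varphi)$ with $\varphi_j\to\varphi$ and $\dbar\varphi_j\to\dbar\varphi$ in $H^{s,p}(\Omega)$.

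\textbf{Passing to the limit.} Each $\varphi_j$ and each $\dbar\varphi_j$ is in $C^1(\overline\Omega)$, so \rp{hf_C1} applies with $E=\Ec$ — here one invokes that $\Ec$ maps $C^\infty(\overline\Omega)$ into $C^\infty(\C^n)$, which follows from $E_\omega$ being bounded on $H^{s,p}$ for every $s$ (see \rp{Prop::ext_op}) — and gives $\varphi_j=\dbar\Hc_q\varphi_j+\Hc_{q+1}\dbar\varphi_j$. Letting $j\to\infty$ and using the continuity from the first paragraph, the equality passes to the limit in $H^{s-\yh,p}_{(0,q)}(\Omega)$, which gives $\varphi=\dbar\Hc_q\varphi+\Hc_{q+1}\dbar\varphi$, in particular as distributions. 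For the final assertion: if $\dbar\varphi=0$ then $\varphi\in X$ automatically, $\Hc_{q+1}\dbar\varphi=0$, so $\varphi=\dbar\Hc_q\varphi$, and $\Hc_q\varphi\in H^{s+\yh,p}_{(0,q-1)}(\Omega)$ by \rt{H_thm}.

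\textbf{Main obstacle.} The delicate part is the density step: one needs $\varphi_j\to\varphi$ \emph{and} $\dbar\varphi_j\to\dbar\varphi$ simultaneously in $H^{s,p}(\Omega)$, and a plain bounded extension of $\varphi$ does not control $\dbar$ of the extension in $H^{s,p}(\C^n)$. The inward-translation device circumvents this precisely because $\dbar$ is translation-invariant; the remaining work is routine bookkeeping with the partition of unity and with the chart-based definition of $\Ec$, together with checking the smoothness hypothesis of \rp{hf_C1} for $\Ec$.
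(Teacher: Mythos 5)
Your proposal is correct and follows essentially the same route as the paper: prove the formula for forms smooth up to the boundary via Proposition \ref{hf_C1}, approximate a general $\varphi$ in the graph norm $\|\varphi\|_{H^{s,p}}+\|\dbar\varphi\|_{H^{s,p}}$ by smooth forms using a partition of unity and an inward-shifting regularization that commutes with $\dbar$, and pass to the limit using the boundedness of $\Hc_q$ from Theorem \ref{H_thm}. The only (immaterial) difference is that the paper merges your translate-then-mollify step into a single convolution with a mollifier supported in a cone $-K_\nu$ with $(U_\nu\cap\Omega)+(K_\nu\cap\Bb^n(\ve_\nu))\subseteq\Omega$, which avoids invoking the global extension $\Ec\varphi$ at that stage.
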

\begin{proof}

Formula \re{hf_dist} is proved in \cite{Gong19} for Stein's extension operator for $\var, \dbar \var \in C^1(\ov{\Om} )$. 
The statement holds for smooth forms by \rp{hf_C1}, and we shall use approximation for the general case. 
First we show that there exists a sequence $\var_{\ve} \in C^{\infty} (\ov{\Om} )$ such that 
	\begin{gather*}
	\var_{\ve} \overset{\ve \to 0}{\ra} \var \quad \text{in $H^{s,p}(\Om)$},   
	\\ 
	\dbar \var_{\ve} \overset{\ve \to 0}{\ra}  \dbar \var \quad \text{in $H^{s,p}(\Om)$}. 
	\end{gather*}
The smoothing is done componentwise, and for simplicity we will continue to denote the coefficient functions of $\var$ by $\var$. Take an open covering $\{ U_{\nu} \}_{\nu=0}^M$ of $\Om$ such that 
\[
	U_0  \subset\subset \Om, \quad b \Om \seq \bigcup_{ \nu=1}^M U_{\nu},  \quad U_{\nu} \cap \Om = \Phi_\nu(\{  x_N > \rho_{\nu} (x') \}),  \quad \nu=1, \dots, M. 
\]
Here $\Phi_\nu$, $1\le \nu\le M$ are some invertible affine linear transformations.

Let $\chi_{\nu}$ be a partition of unity associated with $\{U_{\nu} \}$, i.e. $\chi_{\nu} \in C^{\infty}_c(U_{\nu}) $ and $\sum \chi_{\nu} = 1$.  
	
Let $\Bb^{2n}$ be the unit ball in $\C^n$. For each $1 \leq \nu \leq M$, we can find an open cone $K_{\nu}$ and some $\ve_{\nu}$ such that $(U_{\nu} \cap \Om) + (K_{\nu} \cap \Bb^{n} ( \ve_{\nu})) \seq \Om$.
	
Take $\psi_0 \in C^{\infty}_{c} (\B^N)$ with $\psi_0 \geq 0$ and $\int_{\C^{n}} \psi_0 = 1$. For $1 \leq \nu \leq N$, take $\psi_{\nu} \in C^{\infty}_{c} (-K_{\nu})$ with $\psi_{\nu} \geq 0$ and $\int_{\C^{n}} \psi_\nu = 1$. 
Write $\psi_{\nu, \ve} (x) = \ve^{-2n} \psi _{\nu} (\frac{x}{\ve})$. 
For $\ve>0$ sufficiently small, we can define 
	\begin{gather*}   
	(\chi_0 \var) \ast \psi_{0, \ve} (z) 
	= \int_{\B^N} ( \chi_0 \var) (z - \ve \zeta) \psi_0 (\zeta) \, d V(\zeta), \quad z \in U_0, 
	\\
	(\chi_{\nu} \var) \ast \psi_{\nu, \ve} (z) 
	=\int_{-K} (\chi_j \var) (z - \ve \zeta) \psi_{\nu} (\zeta) \, d V(\zeta), \quad z \in U_{\nu} \cap \Om, \quad \nu = 1, \dots, M. 
	\end{gather*}  
	Setting $\var_{\ve} := \sum_{\nu=0}^M (\chi_{\nu} \var) \ast \psi_{\nu, \ve} \in C^{\infty} (\ov{\Om})$. Clearly 
	$\| \var_{\ve} - \var \|_{H^{s,p}(\Om)} \to 0$ since $	\| ( \chi_{\nu} \var) \ast \psi_{\nu, \ve} - \chi_{\nu} \var \|_{H^{s,p} (\Om)}   \overset{\ve \to 0}{\ra}  0$ for each $0 \leq \nu \leq M$.  Also, 
	\[
	\dbar \var_{\ve} = \sum_{\nu =0}^M \psi_{\nu, \ve} \ast \left( \dbar \chi_{\nu} \var + \chi_{\nu} \dbar \var \right).
	\]
	By assumption, both $\dbar \chi_{\nu} \var $ and $\chi_{\nu} \dbar \var$ are in $H^{s,p} (\Om)$, so $\psi_{\nu, \ve} \ast \left( \dbar \chi_{\nu} \var + \chi_{\nu} \dbar \var \right)$ converges to $\chi_{\nu} \dbar \var + \dbar \chi_{\nu} \var $ in $H^{s,p} (\Om)$. Taking the sum over $\nu$ we see that $\| \dbar \var_{\ve} - \dbar \var \|_{H^{s,p} (\Om)} \to 0$. 
	
Now \eqref{hf_dist} holds for $\var$ replaced with $\var_{\ve}$. By \rt{H_thm},  
	\begin{align*}
	\| \dbar \Hc_q (\var_{\ve} - \var ) \|_{H^{s-\yh,p} (\Om)} 
	&\leq  \| \Hc_q (\var_{\ve} - \var ) \|_{H^{s +\yh,p} (\Om)} 
	\\& \leq  \|  \var_{\ve} - \var \|_{H^{s,p} (\Om)},   
	\end{align*}
	and also 
	\begin{align*}
	\| \Hc_{q+1} \dbar (\var_{\ve} - \var) \|_{H^{s + \yh,p} (\Om)}
	&\leq \| \dbar (\var_{\ve} - \var) \|_{H^{s,p} (\Om)}. 
	\end{align*}
Then \eqref{hf_dist} follows by taking $\ve \to 0$. 

Note that if $\varphi\in H^{s,p}$ is $\dbar$-closed, then the distribution $\dbar\varphi( =0)$ is in $H^{s,p}(\Omega)$. Therefore \eqref{hf_dist} holds for this $\varphi$. In particular $u=\Hc_q\varphi$ is an $H^{s+\frac12,p}_{(0,q-1 )}$ form that solves $\dbar u=\varphi$.
\end{proof} 
		
First we prove a lemma which will be useful later. 
	
	\begin{lemma}\label{intestle} 
	Let $ n \geq 2$, $\beta \geq 0$, $\all>-1$, and let $0 < \del < \yh $. If $\all < \beta - \yh $, then 
		\[
		\int_0^1 \int_0^1 \int_0^1 \frac{s_1^{\all} t^{2n-3} \, ds_1 \, ds_2 \, dt}{(\del + s_1 + s_2 + t^2) ^{2+\beta}  (\del + s_1+ s_2 + t)^{2n-3} } \leq C  \del^{\all - \beta + \yh}.  
		\] 
	\end{lemma}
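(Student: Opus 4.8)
The plan is to handle the three integrations one at a time, from the outside in, starting with the $s_2$-integral, then the $s_1$-integral, and finally the $t$-integral. First I would fix $s_1$ and $t$ and integrate in $s_2\in(0,1)$. Writing $a=\del+s_1+t^2$ and $b=\del+s_1+t$ (so $a\le b$), the $s_2$-integrand is $(a+s_2)^{-2-\beta}(b+s_2)^{-(2n-3)}$. Since $2n-3\ge1$ for $n\ge2$, one can bound $(b+s_2)^{-(2n-3)}\le b^{-(2n-3)}$ when we want to pull it out, or more carefully split the region $s_2<b$ versus $s_2>b$; the cleanest route is to note $\int_0^1 (a+s_2)^{-2-\beta}(b+s_2)^{-(2n-3)}\,ds_2 \lesssim \int_0^\infty (a+s_2)^{-2-\beta}(b+s_2)^{-(2n-3)}\,ds_2$, and since $\beta\ge0$ the factor $(a+s_2)^{-2-\beta}$ is integrable at infinity; estimating $(b+s_2)^{-(2n-3)}$ by its value at $s_2=0$ gives the bound $\lesssim a^{-1-\beta} b^{-(2n-3)}$. (When $n=2$, $2n-3=1$ and one instead gets a logarithmic factor that is absorbed by lowering $\beta$ slightly or by a direct computation; I would treat $n=2$ as a minor separate case, or observe $(b+s_2)^{-1}\le (a+s_2)^{-1}$ is false but $b\ge a$ still lets the same scaling work after splitting at $s_2\sim b$.) In all cases the outcome is
\[
\int_0^1 \frac{ds_2}{(\del+s_1+s_2+t^2)^{2+\beta}(\del+s_1+s_2+t)^{2n-3}} \lesssim \frac{1}{(\del+s_1+t^2)^{1+\beta}(\del+s_1+t)^{2n-3}}.
\]

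Next I would carry out the $s_1$-integral. We are left with $\int_0^1 s_1^\alpha (\del+s_1+t^2)^{-1-\beta}(\del+s_1+t)^{-(2n-3)}\,ds_1$. Here it helps to compare $s_1$ with $t^2$ (and note $\del<t$ on the relevant range after we fix how $\del$ and $t$ compare, but let me keep it uniform). Since $\alpha>-1$, the weight $s_1^\alpha$ is integrable at $0$. Split into $s_1<t^2$ and $s_1>t^2$. On $s_1<t^2$, bound $(\del+s_1+t^2)^{-1-\beta}\approx (\del+t^2)^{-1-\beta}$ and $(\del+s_1+t)^{-(2n-3)}\approx (\del+t)^{-(2n-3)}$, and $\int_0^{t^2}s_1^\alpha\,ds_1\approx t^{2\alpha+2}$; this contributes $\lesssim t^{2\alpha+2}(\del+t^2)^{-1-\beta}(\del+t)^{-(2n-3)}$. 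On $s_1>t^2$, bound $s_1^\alpha\lesssim s_1^\alpha$ but now $(\del+s_1+t^2)\approx(\del+s_1)$ and $(\del+s_1+t)\approx(\del+s_1)$ too (since $s_1>t^2$ does not give $s_1>t$, so more carefully split again at $s_1\sim t$); the key point is that the $s_1$-integral of $s_1^\alpha(\del+s_1)^{-1-\beta-(2n-3)}$ over $s_1\gtrsim t^2$ converges at infinity because $\alpha-1-\beta-(2n-3)<-1$ (using $\alpha<\beta-\yh$ and $n\ge2$), yielding a bound of the form $t^{2(\alpha-\beta)} (\del+t)^{-(2n-3)}$ or similar, dominated by the first piece. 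The upshot I expect is
\[
\int_0^1 \frac{s_1^\alpha\,ds_1}{(\del+s_1+t^2)^{1+\beta}(\del+s_1+t)^{2n-3}} \lesssim \frac{t^{2\alpha+2-2(1+\beta)\wedge\cdots}}{(\del+t)^{2n-3}} + (\text{harmless terms}),
\]
which after simplification behaves like $(\del+t^2)^{\alpha-\beta}(\del+t)^{-(2n-3)}$ plus lower-order contributions; the condition $\alpha>-1$ is exactly what makes the $s_1$-integral near $0$ finite, and $\alpha<\beta-\yh$ will be used in the final $t$-integration.

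Finally I would perform the $t$-integral: we must show $\int_0^1 t^{2n-3}(\del+t^2)^{\alpha-\beta}(\del+t)^{-(2n-3)}\,dt \lesssim \del^{\alpha-\beta+\yh}$. Substituting $t\mapsto \sqrt{\del}\,\tau$ (so $\del+t^2 = \del(1+\tau^2)$, $\del+t\approx \del+\sqrt\del\,\tau = \sqrt\del(\sqrt\del+\tau)$), the integrand scales: $t^{2n-3}dt = \del^{n-1}\tau^{2n-3}d\tau$, $(\del+t^2)^{\alpha-\beta}=\del^{\alpha-\beta}(1+\tau^2)^{\alpha-\beta}$, and $(\del+t)^{-(2n-3)}\approx \del^{-(n-3/2)}(\sqrt\del+\tau)^{-(2n-3)}$, for a total power $\del^{(n-1)+(\alpha-\beta)-(n-3/2)} = \del^{\alpha-\beta+\yh}$ times $\int_0^{1/\sqrt\del}\tau^{2n-3}(1+\tau^2)^{\alpha-\beta}(\sqrt\del+\tau)^{-(2n-3)}\,d\tau$. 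For $\tau\ge1$ this last integrand is $\approx \tau^{2(\alpha-\beta)}$, which is integrable at $\infty$ precisely because $2(\alpha-\beta)<-1$, i.e. $\alpha<\beta-\yh$ — this is where that hypothesis is essential; for $\tau\le 1$ one checks $\tau^{2n-3}(\sqrt\del+\tau)^{-(2n-3)}\le1$ so the piece is bounded. Hence the remaining $\tau$-integral is $O(1)$ uniformly in $\del\in(0,\yh)$, and we conclude.

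The main obstacle, and the part requiring the most care, is bookkeeping the three-way comparisons of the scales $\del$, $s_1$, $t$, $t^2$ simultaneously — in particular making sure that in each sub-region the two denominator factors are replaced by the correct dominant scale, and that the borderline case $n=2$ (where $2n-3=1$ and logarithms threaten to appear) is handled, e.g. by a slightly more careful splitting or by absorbing the log into an $\eps$-loss of exponent. The hypotheses are used exactly as follows: $\alpha>-1$ guarantees convergence of the $s_1$-integral at the origin, while $\alpha<\beta-\yh$ guarantees convergence of the rescaled $\tau$-integral at infinity and thus the uniform-in-$\del$ bound; $0<\del<\yh$ keeps us away from the upper endpoint $1$ so the rescaling $t=\sqrt\del\,\tau$ stays within range. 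I would also double-check that $\beta\ge0$ is only needed to make the $s_2$-integral converge at infinity, which it does.
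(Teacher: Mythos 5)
Your argument is correct, but it is organized quite differently from the paper's proof. The paper partitions the three-dimensional domain of integration into seven regions according to which of $\del$, $s_1$, $s_2$, $t$, $t^2$ dominates, freezes both denominators at the dominant scale in each region, and computes each resulting integral separately (absorbing a possible logarithm in the regions where $s_1$ or $s_2$ dominates, using that $\all-\beta+\yh<0$). You instead integrate the variables out one at a time: the $s_2$-integral gives the clean bound $(\del+s_1+t^2)^{-1-\beta}(\del+s_1+t)^{-(2n-3)}$, the $s_1$-integral (split at $s_1\sim\del+t^2$ and $s_1\sim\del+t$, using $\all>-1$ near $0$ and $\all<\beta-\yh$ for the tail) gives $(\del+t^2)^{\all-\beta}(\del+t)^{-(2n-3)}$, and the substitution $t=\sqrt{\del}\,\tau$ in the last integral produces the factor $\del^{\all-\beta+\yh}$ times a $\tau$-integral that converges at infinity precisely because $2(\all-\beta)<-1$. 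Each of these intermediate bounds checks out, so the proof goes through; your sequential route makes the origin of the exponent $\all-\beta+\yh$ and the role of the hypothesis $\all<\beta-\yh$ especially transparent, at the cost of some bookkeeping in the $s_1$-step (your displayed intermediate formula there is garbled, though the stated conclusion is right), whereas the paper's region decomposition keeps every computation a one-liner. One small correction: your worry about a logarithm at $n=2$ in the $s_2$-step is unfounded, since $2n-3\ge1$ makes $(b+s_2)^{-(2n-3)}$ monotonically decreasing and the bound by its value at $s_2=0$ is exact; the only place logarithms genuinely threaten is the borderline exponent $\all-\beta=-1$ in the later integrations, and there they are absorbed because the target power $\del^{\all-\beta+\yh}$ blows up as $\del\to0$.
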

\begin{proof}

	Partition the domain of integration into seven regions: \\ 
	$R_{1}: t> t^{2} > \del, s_{1}, s_{2}$. We have
	\[
	I \leq \int_{\sqrt{\del}}^{1} \frac{t^{2n-3}}{t^{4 + 2\beta} t^{2n-3}} \left( \int_{0}^{t^{2}} s_{1}^{\all} \, ds_{1} \right) \left( \int_{0}^{t^{2}} \, ds_{2} \right) \, dt 
	\leq C \int_{\sqrt{\del}}^{1} t^{2 \all - 2\beta } \, dt \leq C \del^{\all - \beta + \yh}. 
	\] 
	$R_{2}: t> \del > t^{2}, s_{1}, s_{2}$. We have
	\[ 
	I \leq  \del^{-2 - \beta} \left( \int_{\del}^{\sqrt{\del}} \frac{t^{2n-3} }{t^{2n-3}} \, dt \right) \left( \int_{0}^{\del} s_{1}^{\all} \, ds_{1} \right) \left( \int_{0}^{\del} \, ds_{2} \right)
	\leq C \del^{\all - \beta + \yh}. 
	\] 
	$R_{3}: t> s_{1} > \del, t^{2}, s_{2}$. We have 
	\[
	I \leq \int_{\del}^{1} \frac{s_{1}^{\all }}{s_{1}^{2 + \beta}} \left( \int_{0}^{\sqrt{s_{1}}} \frac{t^{2n-3}}{t^{2n-3}} \, dt \right) \left( \int_{0}^{s_{1}} \, ds_{2} \right) \, ds_{1} 
	\leq C \int_{\del}^1 s_1^{\all - \beta + \yh - 1} \, ds_1 
	\leq C \del^{\all- \beta + \yh}. 
	\]
	$R_{4}: t> s_{2} > \del, t^{2}, s_{1}$. We have
	\[ 
	I \leq \int_{\del}^{1} \frac{1}{s_{2}^{2+\beta} } \left( \int_{0}^{\sqrt{s_{2}}} \frac{t^{2n-3}}{t^{2n-3}} \, dt \right) 
	\left( \int_{0}^{s_{2}} s_{1}^{\all} \, ds_{1} \right) \, ds_{2}   
	\leq C \int_{\del}^1 s_2^{\all - \beta + \yh - 1} \, ds_2  
	\leq C \del^{\all - \beta + \yh}.  
	\]
	$R_{5}: \del > t, t^{2}, s_{1}, s_{2}$. We have
	\[ 
	I \leq \del^{-2 -\beta } \del^{-(2n-3)} \left( \int_{0}^{\del} t^{2n-3} \, dt \right) \left( \int_{0}^{\del} s_{1}^{\all} \, ds_{1} \right)    \left( \int_{0}^{\del} \, ds_{2} \right) 
	\leq C \del^{\all - \beta +1}. 
	\] 
	$R_{6}: s_{1} > \del, t, t^{2}, s_{2}$. We have
	\[ 
	I \leq \int_{\del}^{1} \frac{s_{1}^{\all }}{s_{1}^{2 + \beta} s_{1}^{2n-3}} \left( \int_{0}^{s_{1}} t^{2n-3} \, dt \right) \left( \int_{0}^{s_{1}} \, ds_{2} \right) \, ds_{1} 
	\leq C  \int_{\del}^1 s_1^{\all - \beta} \, ds_1. 
	\] 
	$R_{7}: s_{2} > \del, t, t^{2}, s_{1}$. We have
	\[
	I \leq \int_{\del}^{1} \frac{1}{s_2^{2+\beta} s_2^{2n-3}} \left( \int_{0}^{s_{2}} t^{2n-3} \, dt \right) \left( \int_{0}^{s_{2}} s_{1}^{\all } \, ds_{1} \right)  \, ds_{2}  
	\leq C  \int_{\del}^1 s_2^{\all - \beta} \, ds_2. 
	\]
	Here the constants depend only on $n$, $\all$ and $\beta$. For $R_6$ and $R_7$, we have  
	\[   
	\int_\delta^1r^{\alpha-\beta}dr\le \begin{cases}C, &\alpha-\beta >-1, \\C (1+ |\log\del| ), &\alpha-\beta= -1, \\
	C\delta^{\alpha-\beta + 1}, &\alpha-\beta< -1,\end{cases} 
	\] 
	which is bounded by $ C\delta^{\alpha-\beta+\frac12}$ in all cases. 
\end{proof}	

We now write the homotopy operator $\Hc_q \var$ as 
\begin{align}  \label{H0H1}
 \Hc_q \var 
 &= \Hc_q^0 \var + \Hc_q^1 \var, 
\end{align}
where  
\[
  \Hc_q^0 \var: = \int_{\Uc} K^0_{0, q-1} \we \Ec\var, \quad 
  \Hc_q^1 \var : = \int_{\Uc} K^{01}_{0, q-1} \we [\dbar, \Ec] \var. 
\]

For the operator $\Hc_q^0$, we can gain one derivative for any $\var \in H^{s,p}(\Om)$, $s \geq 0$. 

\begin{prop} \label{H0_prop} 
  Let $1< p < \infty$ and $s \geq 0$. Suppose $\var \in H^{s,p}_{(0,q)}(\Om)$ with $ q \geq 1$. Then $\Hc_q^0 \var$ is in $H^{s+1,p}_{(0,q-1)} (\Uc)$, and 
  \[
     \| \Hc_q^0 \var \|_{H^{s+1,p} (\U)} \lesssim \| \Ec\var \|_{H^{s,p} (\U)} \lesssim \| \var \|_{H^{s,p}(\Om)}. 
  \]                                                
\end{prop}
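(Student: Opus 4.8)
The plan is to recognize $\Hc_q^0$ as a convolution operator whose kernel is smooth away from the diagonal and homogeneous of degree $-(2n-1)$ in $\zeta-z$, and then, after a harmless physical-space cutoff, to read off the gain of one derivative from the Mikhlin multiplier theorem. The second inequality $\|\Ec\var\|_{H^{s,p}(\U)}\ls\|\var\|_{H^{s,p}(\Om)}$ is immediate: by \rp{Prop::ext_op} together with the partition of unity behind \eqref{E_def}, $\Ec:H^{s,p}(\Om)\to H^{s,p}(\C^n)$ is bounded, and $\supp\Ec\var\seq\ov\U$ for every $\var$ since each summand of \eqref{E_def} is multiplied by a fixed $\chi_\nu\in C_c^\infty(U_\nu)$ with $\bigcup_\nu U_\nu\seq\U$; hence $\|\Ec\var\|_{H^{s,p}(\U)}\le\|\Ec\var\|_{H^{s,p}(\C^n)}\ls\|\var\|_{H^{s,p}(\Om)}$. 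What remains is to show $\|\Hc_q^0\var\|_{H^{s+1,p}(\U)}\ls\|\Ec\var\|_{H^{s,p}(\C^n)}$.

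First I would put $g:=\Ec\var$. Because $\supp g\seq\ov\U$, for $z\in\U$ we have $\Hc_q^0\var(z)=\int_{\C^n}K^0_{0,q-1}(z,\zeta)\we g(\zeta)$. From the definition of $K^0$, each of its coefficients is a finite linear combination of functions $k(\zeta-z)$ with $k\in C^\infty(\C^n\sm\{0\})$ homogeneous of degree $-(2n-1)$: there is one factor of degree $-1$ coming from $\frac{\langle\ov\zeta-\ov z,d\zeta\rangle}{|\zeta-z|^2}$, wedged with $n-1$ factors of degree $-2$ coming from $\dbar_{\zeta,z}$ applied to it. Contracting the $d\zeta$, $d\ov\zeta$ monomials against $dV(\zeta)$, every $d\ov z$-component of $\Hc_q^0\var$ becomes a finite sum of scalar integrals $\int_{\C^n}k(\zeta-z)g_I(\zeta)\,dV(\zeta)$ with $g_I$ a coefficient of $g$; and since $x\mapsto k(-x)$ has the same properties as $k$, the Proposition reduces to: if $k\in C^\infty(\R^{2n}\sm\{0\})$ is homogeneous of degree $-(2n-1)$ and $h\in H^{s,p}(\R^{2n})$ has $\supp h\seq\ov\U$, then $\|k\ast h\|_{H^{s+1,p}(\U)}\ls_{s,p,\U}\|h\|_{H^{s,p}(\R^{2n})}$.

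For that I would fix $R=\diam\U$ and a cutoff $\psi\in C_c^\infty(\R^{2n})$ with $\psi\equiv1$ on $\ov{B(0,R)}$. Since $|z-\zeta|\le R$ whenever $z\in\U$ and $\zeta\in\supp h$, we get $(k\ast h)(z)=((\psi k)\ast h)(z)$ for all $z\in\U$, so $\|k\ast h\|_{H^{s+1,p}(\U)}\le\|(\psi k)\ast h\|_{H^{s+1,p}(\R^{2n})}$; note $\psi k\in L^1(\R^{2n})$, being locally integrable ($-(2n-1)>-2n$) and compactly supported. It now suffices to check that $m(\xi):=(1+|\xi|^2)^{1/2}\,\widehat{\psi k}(\xi)$ is a Mikhlin multiplier, i.e.\ $|\partial^\alpha m(\xi)|\ls_\alpha(1+|\xi|)^{-|\alpha|}$ for $|\alpha|\le n+1$. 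Splitting $\psi k=\psi_0k+(\psi-\psi_0)k$ with a second cutoff $\psi_0\in C_c^\infty(\R^{2n})$ equal to $1$ near $0$, the term $(\psi-\psi_0)k$ is in $C_c^\infty(\R^{2n})$, so its Fourier transform is Schwartz; and $\widehat{\psi_0 k}=\widehat{\psi_0}\ast\widehat k$, where $\widehat k$ is homogeneous of degree $(2n-1)-2n=-1$ and smooth off the origin (a classical fact about Fourier transforms of homogeneous distributions), so by the standard symbol calculus $\widehat{\psi_0 k}$ is a symbol of order $-1$ on $\R^{2n}$; multiplying by the order-$1$ symbol $(1+|\xi|^2)^{1/2}$ yields an order-$0$ symbol. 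Hence $m(D)$ is bounded on $L^p(\R^{2n})$ for $1<p<\infty$ by the classical Mikhlin multiplier theorem, which is equivalent to saying that convolution with $\psi k$ maps $H^{s,p}(\R^{2n})$ boundedly into $H^{s+1,p}(\R^{2n})$ for every $s\in\R$. Chaining the estimates, $\|\Hc_q^0\var\|_{H^{s+1,p}(\U)}\ls\|g\|_{H^{s,p}(\R^{2n})}=\|\Ec\var\|_{H^{s,p}(\C^n)}\ls\|\var\|_{H^{s,p}(\Om)}$.

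The step I expect to be the main obstacle is the symbol estimate in the preceding paragraph: the exactly-homogeneous kernel $k$ does \emph{not} define a bounded operator $H^{s,p}(\R^{2n})\to H^{s+1,p}(\R^{2n})$ on all of $\R^{2n}$, since its multiplier $\widehat k$ blows up like $|\xi|^{-1}$ at the origin, so one must cut it off in physical space first and then verify that $(1+|\xi|^2)^{1/2}\widehat{\psi k}$ is Mikhlin — equivalently, that $\Hc_q^0$ restricted to $\U$ is a classical pseudodifferential operator of order $-1$. The rest is bookkeeping with the explicit Bochner--Martinelli--Koppelman kernel plus two applications of \rp{Prop::ext_op}; the same argument in fact works for every $s\in\R$, and for $q=n$ (cf.\ the remark after \rt{H_thm}) verbatim on any bounded Lipschitz domain, since no $\dbar$-closedness of $\Ec\var$ enters.
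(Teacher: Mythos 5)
Your proof is correct, but it follows a genuinely different route from the paper. The paper disposes of \rp{H0_prop} in two sentences: it invokes \cite[Proposition 3.2]{DG19} for the integer values of $s$ and then obtains the full range $s\ge 0$ by complex interpolation of the scales $H^{s,p}$ (via \rp{cinterpol} and \rp{interpol_space}). You instead give a self-contained argument: after noting that $\supp\Ec\var$ lies in a fixed compact set, you truncate the Bochner--Martinelli--Koppelman kernel in physical space, observe that the truncated kernel is a classical convolution kernel of order $-1$ (its coefficients being smooth off the diagonal and homogeneous of degree $-(2n-1)$ in $\zeta-z$), and conclude the gain of one derivative from the Mikhlin--H\"ormander theorem applied to the order-$0$ symbol $(1+|\xi|^2)^{1/2}\widehat{\psi k}$. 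The two-cutoff decomposition $\psi k=\psi_0k+(\psi-\psi_0)k$ and the symbol estimates $|\partial^\alpha\widehat{\psi_0k}(\xi)|\lesssim(1+|\xi|)^{-1-|\alpha|}$ are the standard facts about truncated homogeneous kernels, so the argument is sound. What your approach buys: it needs no external reference, it yields the estimate for every $s\in\R$ in one stroke without interpolation (and hence does not require the interpolation identity on domains), and it makes transparent that $\Hc_q^0$, localized to $\U$, is a pseudodifferential operator of order $-1$. What the paper's approach buys is brevity, at the cost of deferring the integer case to \cite{DG19} and restricting the conclusion to the interpolated range $s\ge0$. One small bookkeeping point worth making explicit in your write-up is that the reduction to scalar convolutions uses that the coefficients of $K^0$ depend only on $\zeta-z$ with constant-coefficient wedge algebra, which is indeed the case for the Bochner--Martinelli--Koppelman kernel.
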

\begin{proof}
The proof for integers $s$ can be found in \cite[Proposition 3.2]{DG19}. The general case follows from interpolation (see Propositions \ref{cinterpol} and \ref{interpol_space}). 
\end{proof}

\pr{mderest} 
Let $\Om \seq \C^{n}$ be a bounded strictly pseudoconvex domain with $C^{2}$ boundary, and $1 < p < \infty$. For $q \geq 1$, let $\Hc_q^1 \var$ be given by  \eqref{H0H1}, where the extension operator $E$ is defined by formula \eqref{E_def}. 
Suppose $\var \in H^{s,p}_{(0,q)}(\Om)$ for $s > \frac{1}{p}$ and $m$ is a positive integer such that $m-s-\frac12>0$. Then there exists some constant $C = C(\Om, p)$ such that
\[ 
   \| \del^{m-s-\yh} D^m \Hc_q^1 \var \|_{L^p(\Om)}  < C \| \var \|_{H^{s,p}  (\Om)}.  
\]  
\epr
\begin{proof}
Note that in view of \rrem{commest_rem} \ref{commest_rem3}, $\Hc^1_q \var$ is $C^{\infty}$ in the interior of $\Om$. For the estimate we will first show that 
\eq{west}  
    \| \del^{m-s-\yh} D^m \Hc_q^1 \var \|_{L^p(\Om)} \leq C \| \del^{1-s} [\dbar, \Ec] \var \|_{L^p(\U \sm\overline \Om) }, 
\eeq 
and then use \rt{commest_thm}. One needs to be careful however since \rt{commest_thm} applies only to special Lipschitz domains. We now explain how one can get around this issue. 
Recall that the extension operator $\Ec$ on a locally Lipschitz domain $\Om$ is defined by (see \eqref{E_def}) 
\[
   \Ec f =  \chi_0 f + \sum_{j=1}^M \chi_j E_{j} (\chi_j f). 
\] 
Thus
\begin{align*}
[D, \Ec] f 
&= D (\chi_0 f ) - \chi_0 (Df ) + \sum_{j=1}^M D ( \chi_j \cdot E_j (\chi_j  f))  - \chi_j \cdot E_j (\chi_j \cdot (D f))  
\\ &= (D \chi_0) f + \sum_{j=1}^M \chi_j [D, E_j] (\chi_j f) + (D \chi_j) E_j (\chi_j f ) + \chi_j \cdot E_j ((D \chi_j)  f) . 
\end{align*} 
Now we would like to bound $\| \del^{1-s} [D,\Ec]f \|_{L^p (\U \sm\overline \Om)}$ by $\| f \|_{H^{s,p} (\Om)}$. 
We need to estimate
\eq{comm_red}  
\sum_{j=1}^M \| \del^{1-s} [D, E_j] ( \chi_j f) \|_{_{L^p(U_j \cap \overline\Omega^c) } } 
+ \| \del^{1-s} g \|_{L^p(U \sm\overline \Om) }, 
\eeq 
where 
\[
  g:= (D \chi_0) f + \sum_{j=1}^M  (D \chi_j) E_j (\chi_j f) + \chi_j \cdot E_j ((D \chi_j) f), \quad \text{on $\C^n$. } 
\] 
The first term in \eqref{comm_red} is bounded by $C \| f \|_{H^{s,p} (\Om)} $ by  \rl{Enu_commest_le}. 
For the second term, notice that since $E_j $ is an extension operator on $U_j \cap \Om $, one has $E_j (\chi_j f) = \chi_j f$ and $E_j ((D \chi_j) f) = (D \chi_j) f $ in $U_j \cap \Om$. 
Hence by the condition $\chi_0 + \sum_j \chi_j^2 = 1$ we have $g \equiv 0$ in $\Om$, in other words, $\supp g \subseteq \U \sm \Om$. In view of \rp{Prop::DualSpace} \ref{H_0space}, we have $g \in H^{s,p}_0 (\U \sm\overline{\Om})$. Clearly $\| g \|_{H^{s,p}_0(\U \sm\overline{\Om})} \leq C\| f \|_{H^{s,p}(\Om)}$ since for each $j$, $E_j$ is an extension operator on $U_j \cap \Om$.  
By \rp{C3}, the following holds for any $s \geq 0$: 
\begin{align*}
\| \del^{1-s} g \|_{L^p(\U \sm\overline \Om) }  \lesssim \| \del^{-s} g \|_{L^p(\U \sm\overline \Om) }\lesssim \| g \|_{H^{s,p}_0(\U \sm\overline \Om)}\lesssim \|  f \|_{H^{s,p}(\Om)}. 
\end{align*}

We now proceed with the proof of \eqref{west}, for which we will estimate 
\eq{k=1int}
\int_{\Om} \del(z)^{p (m - s - \yh )} \left|  D_{z}^{m} \int_{\U \sm\overline \Om} K^{01}_{0,q} (z, \zeta) \we [\dbar, \Ec] \var (\zeta) \, dV(\zeta)   \right|^{p} \, dV(z), 
\eeq  
where in the definition of $K^{01}_{0,q}$ (see \eqref{Om01}) we set $W$ to be a $C^1$ Leray map. 
Writing $ \Phi(z, \zeta) = W(z, \zeta) \cdot (\zeta -z)$, the inner integral can be expanded to a linear combination of 
\begin{gather} \label{k1f} 
\mc{K} f (z) =  \int_{\mc{U} \sm\overline \Om}  f( \zeta) P (W_1 (z, \zeta), z, \zeta) \frac{N_{1} (\zeta-z) }{\Phi^{n-l} (z, \zeta) |\zeta -z|^{2l}} \, dV(\zeta) , \quad  1 \leq l \leq n-1. 
\\  \nn
W_1 = (W,  \pa_{\zeta} W, \pa_z^{k_0} W  ),  \: k_0 \leq m.    
\end{gather} 
Here $f$ is a coefficient function of $[\dbar, \Ec] \var$. $P(w)$ denotes a polynomial in $w$ and $\ov{w}$, and $N_1$ denotes a monomial of degree $1$ in $\zeta - z$ and $\ov{\zeta -z}$. $P$ may differ when  recurs. 

By the remark after \rp{hf_C1}, we can take a small neighborhood $\V$ of a fixed boundary point $\zeta^{\ast} \in b \Om$. For $z \in \V$, let $\phi_{z}: \V \to \phi(\V)$ be the coordinate transformation satisfying \eqref{Phie1} and \eqref{Phie2}.
Using a partition of unity in $\zeta$ space and replacing $f$ by $\chi f$ for a $C^{\infty}$ cut-off function $\chi$, we may assume
\[
\text{supp}_{\zeta} \, f \seq \V \sm\overline \Om. 
\]
Similarly by a partition of unity in $z$ space and replacing $K^{01}_{0,q}$ by $\chi K^{01}_{0,q}$ we may assume 
\[ 
\supp_{z} \, K^{01}_{0,q} (z, \zeta) \seq \V \cap \Om. 
\] 
Write $\wti{N}_{1-2l}(z,\zeta) = \frac{N_1 (\zeta-z ) }{|\zeta -z |^{2l}}$. 
For $z \in \V \cap \Om$ and $\zeta \in \V \sm\overline \Om$, we have
\eq{Ntizde}
| \pa_{z} ^{j} \wti{N}_{1-2l} ( \zeta -z) | \lesssim |\zeta - z|^{1-2l - j}, 
\eeq
\eq{Phizde}
|\pa_{z}^{j} \Phi^{-(n-l)} (z, \zeta)| \lesssim | \Phi^{-(n-l) - j} (z, \zeta) | . 
\eeq
Here we use the fact that $W$ is holomorphic in $z \in \U$.  
Write
\[ 
 D_{z}^{m} \mc{K} f (z) = \int_{\U \sm\overline \Om} A_m(z, \zeta) f(\zeta) \, dV(\zeta), 
\]
where  $A_m (z, \zeta)$ is a sum of terms of the form
\begin{equation} \label{Am}
A_m (z, \zeta) = \frac{P_{1} (z, \zeta)}{\Phi^{n-l + \mu_1} (z, \zeta)} \pa_{z}^{\mu_2} \wti{N}_{1-2l} (z,\zeta), \quad 1 \leq l \leq n-1, \quad \mu_1+\mu_2 \leq m. 
\end{equation}
Setting $\frac{1}{p} + \frac{1}{p'} =1$, we have
\begin{align*}
| D_{z}^{m} \mc{K} f (z) | &\leq \int_{\U \sm\overline \Om} |A_m(z, \zeta)|^{\frac{1}{p}}  |A_m(z, \zeta)|^{\frac{1}{p'}} |f(\zeta)| \, dV(\zeta)  \\
&= \int_{\U \sm\overline \Om} \del(\zeta)^{-\eta} |\del (\zeta)^{\eta} A_m(z, \zeta)|^{\frac{1}{p}}  |  \del (\zeta)^{\eta} A_m(z, \zeta)|^{\frac{1}{p'}} |f(\zeta)| \, dV(\zeta), 
\end{align*}
where $\eta$ is a number to be specified. 
By Hölder's inequality, we get
\eq{secdrfppe}
| D_{z}^{m} \mc{K} f (z)|^{p} \leq \left[ \int_{\U \sm\overline \Om} \del (\zeta)^{- \eta p + \eta} |A_m(z, \zeta)| |f(\zeta)|^{p} \, dV(\zeta)  \right] \left[ \int_{\U \sm\overline \Om} \del( \zeta)^{\eta} |A_m (z, \zeta)| \, dV(\zeta)  \right]^{\frac{p}{p'}}. 
\eeq
By \eqref{Phie2}, we have $C' | \zeta -z | \geq | \Phi (z, \zeta) | \geq  C| \zeta -z |^{2}$. In view of \eqref{Ntizde} and \eqref{Phizde}, it suffices to estimate $A_m (z, \zeta) $ for $l=n-1$, $\mu_1 = m$ and $\mu_2 =0 $. Thus from now on we can just assume 
\[ 
A_m(z, \zeta) = \frac{P(W_1,z,\zeta)}{\Phi^{m+1} (z,\zeta)}  \wti{N}_{-(2n-3)}. 
\]
By estimate \eqref{Phie1}, we have for $z \in \V \cap \Om$ and $\zeta \in \V \sm\overline \Om $, 
\eq{Philbe}
|\Phi (z, \zeta) | \geq c (\del(z) + |s_{1}| + |s_{2} | + |t| ^{2} ), \quad |\zeta - z| \geq c(\del(z)+ |s_1| + |s_2| + t), 
\eeq
where $(s_{1}, s_{2}, t) = \left( \phi^{1}_{z}(\zeta) = \rho(\zeta), \phi^{2}_{z} (\zeta), \phi'_{z}(\zeta ) \right) $.

By \eqref{Philbe} and integrating in polar coordinates $t = (t_{1},\dots,t_{2n-2}) \in \R^{2n-2}$, we obtain
\begin{align} \label{Amint1} 
&\int_{\U \sm\overline \Om} \del(\zeta)^{\eta} |A_m(z, \zeta)| \, dV(\zeta)\\  
&\leq C_{0}   \int_{s_1 =0}^{1} \int_{s_2=0}^1\int_{t =0}^{1} \frac{s_1^{\eta} t^{2n-3}\, ds_1 \, ds_2  \, dt}{(\del(z) + s_1 + s_2 + t^{2})^{2+(m-1)}{( \del(z) + s_1 + s_2 + t) }^{2n-3} } \\ \nn
&\leq C_{0} \del(z)^{\eta + \yh - (m-1)} = C_0 \del(z)^{\eta - m + \frac{3}{2} }, 
\end{align}
where we apply \rl{intestle} using $\all = \eta$, $\beta = m -1 \geq 0$ and by choosing 
\eq{etarange1}
   -1 < \eta < \beta - \yh =  m- \frac{3}{2} .  
\eeq
 Note that the constant $C_{0}$ depends only on the domain $\Om$ and the defining function $\rho$. Hence by \eqref{secdrfppe} and applying Fubini theorem we get
\begin{align} \label{Hqfdsetup}
& \int_{\Om} \del(z)^{p (m- s - \yh ) } |D_{z}^{m} \mc{K}  f(z)|^{p} \, dV(z)  
\\ \nonumber & \qquad \lesssim 
\int_{\Om} \del(z)^{\gm}\left( \int_{\U \sm\overline \Om} \del(\zeta)^{(1-p) \eta } |A_m(z, \zeta) | |f(\zeta)|^{p} \, dV(\zeta) \right) \, dV(z)
\\ \nonumber & \qquad \lesssim \int_{\U \sm\overline \Om} \del(\zeta)^{(1-p) \eta } \left[ \int_{\Om}  \del(z)^{\gm} |A_m(z, \zeta)| \, dV(z)   \right] |f(\zeta)|^{p} \, dV(\zeta), \quad 
\end{align} 
where
\eq{mtfdgm'}
\gm = p \left( m - s - \yh \right) + \left( \eta -m + \frac{3}{2} \right) \frac{p}{p'}  = m - \frac{3}{2} + (1-s)p + \eta (p-1). 
\eeq

We now estimate the inner integral in the last line of \eqref{Hqfdsetup}.  
Recall that for each $z \in \V$, we define $C^{1}$ coordinate transformation $\phi_{z}$ for $\zeta \in \V$: 
\[ 
\phi^{1}_{z} (\zeta) = \rho (\zeta), \quad \phi^{2}_{z} (\zeta) = \IM (\rho_{\zeta} \cdot (\zeta -z)), \quad \phi'_{z} (\zeta) = \left( \RE(\zeta' -z'), \IM(\zeta'- z') \right).  
\] 
Now for $\zeta \in \V$, we can similarly define a new coordinate $\wti{\phi}_{\zeta}: \V \to \C^n$ for $z \in \V$:
\begin{gather*} 
\wti{\phi}^{1}_{\zeta} (z) = \rho (z), \quad \wti{\phi}^{2}_{\zeta}(z) = \IM (\rho_{\zeta} \cdot (\zeta -z)),  \quad \wti{\phi}'_{\zeta} (z) = \left( \RE(\zeta' -z'), \IM(\zeta'- z') \right).
\end{gather*}
Write $(\ti{s}_{1}, \ti{s}_{2}, \ti{t}) = (\wti{\phi}^{1}_{\zeta} (z), \wti{\phi}^{2}_{\zeta} (z), \wti{\phi}'_{\zeta} (z)) $ where $|\wti{\phi}^{1}_{\zeta}(z)| =|\rho(z)| \approx \del(z)$.   
By \eqref{Philbe} we have for $z \in \V \cap \Om$ and $\zeta \in \V \sm\overline \Om$, 
\begin{align} \label{Phiest}
|\Phi (z, \zeta) | &\geq c (\del(z) + \phi^{1}_{z} (\zeta) + | \phi^{2}_{z}(\zeta) | + |\phi'_{z} (\zeta)|^{2} ) \\ \nonumber
&\geq c (\del(\zeta) + |\wti{\phi}^{1}_{\zeta} (z)| + | \wti{\phi}^{2}_{\zeta}(z) | + |  \wti {\phi}' _{\zeta} (z)|^{2} ) \\ \nonumber
&= c (\del(\zeta) + |\ti{s}_{1}| + |\ti{s}_{2}| + |\ti{t} |^{2} ), 
\end{align}
and 
\eq{}
|\zeta - z| \geq c (|\del(\zeta) + |\ti s_1| + |\ti s_2| + |\ti t|)
\eeq
Writing in polar coordinates $\ti{t}=(t_1,\dots, t_{2n-2}) \in \R^{2n-2}$, we have 
\begin{align} \label{Hqfdest}
\int_{\Om}  \del(z)^{\gm} |A_m(z, \zeta)| \, dV(z) 
\leq C \int_{\ti{s}_1 =0}^{1} \int_{\ti{s}_2 =0}^{1} \int_{\ti{t} =0}^{1} \frac{\ti{s}_1^{\gm} \,  \wti{t}^{2n-3} \, d \ti{s}_1 \, d \ti{s}_2  \, d \ti{t}}{( \del(\zeta) + \ti{s}_1 + \ti{s}_2 + \ti{t}^{2})^{m+1} \, (\del(\zeta)+ \ti{s}_1 + \ti{s}_2 + \ti{t})^{2n-3}}. 
\end{align} 
To apply \rl{intestle} we take $\all = \gm$ and $\beta = m-1$, and we need $ -1 < \gm < \beta - \yh = m - \frac{3}{2}$. In view of \eqref{mtfdgm'}, this is the same as
\[  
   -1  <  m - \frac{3}{2} + (1-s)p + \eta (p-1) < m - \frac{3}{2}, 
\] 
which translates to 
\eq{etarange2}
\frac{(s-1)p}{p-1} - \frac{m-\yh}{p-1}
< \eta < \frac{(s-1)p}{p-1} = (s-1) p'. 
\eeq 
Note that this is always possible by our assumption on $m$.  
Now in view of both \eqref{etarange1} and \eqref{etarange2} we need to choose 
\eq{etarange} 
\max \left\{ -1, \frac{(s-1)p}{p-1} - \frac{m-\yh}{p-1} \right\} < \eta < \min \left\{ m-\frac{3}{2}, \frac{(s-1) p}{p-1} \right\}, \quad  \frac{1}{p} + \frac{1}{p'} = 1. 
\eeq  
By assumption $s>\frac1p$ and $m>s+\frac12$, so the range of admissible $\eta$ is non-empty.

Now applying \rl{intestle} we obtain 
\[
   \int_{\Om}  \del(z)^{\gm} |A_m(z, \zeta)| \, dV(z)
   \leq C \del(\zeta)^{\gm + \yh - (m-1) }  = C \del(\zeta)^{\gm - m + \frac{3}{2}},             
\]
where the constant depends on $\Om$ only. 

Using \eqref{Hqfdest} in \eqref{Hqfdsetup} we get 
\begin{align*}
\left[ \int_{\Om} \del(z)^{ \left( m-s - \yh \right) p} |D_{z}^{m} Kf(z)|^{p} \, dV(z)   \right]^{\frac{1}{p}}
&\lesssim \left[ \int_{\U \sm\overline \Om} \del(\zeta)^{ (1- p) \eta } \del (\zeta)^{\gm - m + \frac{3}{2}} |f (\zeta) |^{p} \, dV(\zeta) \right]^{\frac{1}{p}} \\
&\lesssim \left[ \int_{\U \sm\overline \Om} \del(\zeta)^{(1-s) p } | [\dbar, \Ec] \var (\zeta )|^p \, dV(\zeta) \right]^{\frac{1}{p}}. 
\end{align*}
This proves \eqref{west} and thus the proposition. 
\end{proof}

Next we extend the result of \rp{mderest} to all lower order derivatives of $u$. 

\pr{loder_est}
Keeping the assumptions of \rp{mderest}, the following holds
\[
   \| D^{k} \Hc_q^1 \var \|_{L^p (\Om, \, \del(z)^{m-s -\yh}) } \leq C (\Om, p) \| \var \|_{H^{s,p}(\Om)}, 
\quad 0 \leq k \leq m. 
\]
\epr 
\begin{proof}

We need to estimate 
\[
\int_{\Om} \del(z)^{p (m - s - \yh )} \left|  D_{z}^{k} \mc{K} f \right|^p \, dV(z), 
\]
where 
\[
  \mc{K}f (z)= \int_{\U \sm\overline \Om} K^{01}_{0,q} (z, \zeta) \we [\dbar, \Ec] \var (\zeta) \, dV(\zeta)  , 
  \quad f = [\dbar, \Ec] \var. 
\]
As before we write 
\[
D_z^k \mc{K} f (z) = \int_{\U \sm\overline \Om} A_k (z, \zeta) f (\zeta) \, dV (\zeta), 
\] 
and 
\eq{split_int} 
| D_{z}^{k} \mc{K} f (z)|^{p} \leq \left[ \int_{\U \sm\overline \Om} \del(\zeta)^{(1-p) \all} |A_k(z, \zeta)| |f(\zeta)|^{p} \, dV(\zeta)  \right] \left[ \int_{\U \sm\overline \Om} \del( \zeta)^{\all} |A_k (z, \zeta)| \, dV(\zeta)  \right]^{\frac{p}{p'}}.  
\eeq 
for some $\all$ to be chosen. 
Now $A_k$ is a sum of the form (see \eqref{Am})
\[
  A_k (z, \zeta) = \frac{P_{1} (z, \zeta)}{\Phi^{n-l+\mu_1} (z, \zeta)} \pa_{z}^{\mu_2} \left\{ \ti{N}_{1- 2l} (\zeta - z) \right\}, \quad 1 \leq l \leq n-1, \quad \mu_1 + \mu_2 \leq k. 
\]
By the same reasoning as before, it suffices to estimate the term for $l = n-1$, $\mu_1=k$, namely we have 
\[
|A_k(z, \zeta)| \lesssim \frac{1}{|\Phi^{k +1} (z, \zeta)| |\zeta -z|^{2n-3}}.  
\]
By a partition of unity in both $z$ and $\zeta$ space, we can assume that $\supp_z A_k(z, \zeta) \seq V \cap \Om $ and $\supp_\zeta f \seq V \sm\overline \Om$, where $V$ is some small neighborhood of a fixed point $\zeta_0 \in b \Om$. 

Now as $| \Phi (z, \zeta) | < C |z-\zeta|$, we can assume that $|\Phi (z, \zeta)| < 1$ for any $z, \zeta \in V$. Hence for $k \leq m$, 
\[
   |A_k (z, \zeta)| \leq \frac{C}{|\Phi^{m +1} (z, \zeta)| |\zeta -z|^{2n-3}}, \quad z, \zeta \in V.  
\] 
In view of \eqref{split_int}, the rest of proof is identical to that of previous theorem.
\end{proof}

Together with the Hardy-Littlewood lemmas from Section 4, we can prove the gain of $1/2$ derivative in Sobolev space for the operator $\Hc_q^1$. 
\begin{prop} \label{H1_prop}  
Let $\Om \seq \C^{n}$ be a bounded strictly pseudoconvex domain with $C^{2}$ boundary. For $q \geq 1$, let $\Hc_q \var$ be given by \eqref{hom_opt}, where the extension operator $\Ec$ is given by \eqref{E_def}. Suppose $\var \in H^{s,p}_{(0,q)}(\Om)$ with $s >\frac{1}{p}$, then $\Hc_q^1 \var \in H^{s+\yh,p}_{(0,q-1)}(\Om)$. 

\end{prop}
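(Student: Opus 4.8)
The plan is to combine the weighted-derivative estimates of Propositions~\ref{mderest} and \ref{loder_est} with the Hardy--Littlewood lemma of Sobolev type (Proposition~\ref{H-L_lo}) and interpolation. Fix $\var\in H^{s,p}_{(0,q)}(\Om)$ with $s>\frac1p$, and write $u=\Hc_q^1\var$. First I would choose an integer $m$ with $m-s-\frac12>0$; for instance if $s<\frac32$ one may take $m=2$, and in general $m=\lceil s+\frac12\rceil+1$ works. By Proposition~\ref{mderest} and Proposition~\ref{loder_est} we have
\[
  \sum_{|\gamma|\le m}\|\del^{m-(s+\yh)}D^\gamma u\|_{L^p(\Om)}\le C\|\var\|_{H^{s,p}(\Om)}.
\]
Here the exponent on $\del$ is exactly $m-l$ with $l=s+\frac12$. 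If $s+\frac12$ were a nonnegative integer $l<m$, Proposition~\ref{H-L_int} would directly give $u\in W^{l,p}(\Om)=H^{l,p}(\Om)$ with the desired bound; but in general $s+\frac12$ is not an integer, so the second step is to interpolate or to peel off integer derivatives.

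The cleaner route is: let $l=\lfloor s+\frac12\rfloor$ and $r=s+\frac12-l\in[0,1)$ (so $r\le1<2$), and apply Proposition~\ref{H-L_int} with $k=m$, $l$ replaced by $l+1$ (assuming $m>l+1$, which holds after possibly enlarging $m$) to the derivatives $D^\gamma u$ for $|\gamma|\le l$: this shows $D^\gamma u\in W^{1,p}(\Om,\del^{1-r})$ for all $|\gamma|\le l$, with
\[
  \sum_{|\gamma|\le l}\big(\|\del^{1-r}D^\gamma u\|_{L^p(\Om)}+\|\del^{1-r}D^{\gamma}Du\|_{L^p(\Om)}\big)\lesssim\|\var\|_{H^{s,p}(\Om)}.
\]
Then I would apply Proposition~\ref{H-L_lo}\ref{Item::H-L_lo::1} (valid since $0\le r\le1$ and $b\Om\in C^2$) to each $D^\gamma u$, $|\gamma|\le l$, obtaining $D^\gamma u\in H^{r,p}(\Om)$ with norm controlled by $\|\var\|_{H^{s,p}(\Om)}$. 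Since $l+r=s+\frac12$, the standard characterization $\|u\|_{H^{l+r,p}(\Om)}\approx\sum_{|\gamma|\le l}\|D^\gamma u\|_{H^{r,p}(\Om)}$ (which follows from Proposition~\ref{Prop::EqvNormC2}, or from $H^{l+r,p}=[H^{l,p},H^{l+1,p}]_r$ together with Lemma~\ref{Lem::Hkp=Wkp}) then yields $u=\Hc_q^1\var\in H^{s+\yh,p}(\Om)$ with $\|\Hc_q^1\var\|_{H^{s+\yh,p}(\Om)}\lesssim\|\var\|_{H^{s,p}(\Om)}$.

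One must be slightly careful about two bookkeeping points. First, Propositions~\ref{mderest} and \ref{loder_est} require $m$ to be chosen after $s$ is fixed; since the constant there depends on $m$ (through $\Om,p$ only, but implicitly the admissible range of $\eta$ shrinks), it is fine to fix one such $m$ at the outset. Second, when $s+\frac12\ge1$ one should double-check that the weight power $1-r$ in Proposition~\ref{H-L_lo}\ref{Item::H-L_lo::1} matches $m-(l+1)$ coming out of Proposition~\ref{H-L_int}: indeed $m-(l+1) = m - l - 1$ and after applying \ref{Item::H-L_int} to reduce from exponent $m-l$ down to exponent $1$ on the $(l{+}1)$-st derivatives we land on weight $\del^{1-r}$ on $D^{\le l+1}u$, which is exactly the hypothesis of \ref{Item::H-L_lo::1}. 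The main obstacle, and the only genuinely nontrivial input, is already behind us in Proposition~\ref{mderest}: the delicate choice of the auxiliary exponent $\eta$ and the kernel estimate via Lemma~\ref{intestle}. The present proposition is essentially the assembly step, so I expect no serious difficulty beyond getting the fractional-part bookkeeping and the norm-equivalence citation right.

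\begin{proof}
Fix an integer $m$ with $m>s+\tfrac12+1$, so in particular $m-s-\tfrac12>0$. By Propositions~\ref{mderest} and \ref{loder_est},
\[
  \sum_{|\gamma|\le m}\big\|\del^{\,m-s-\yh}D^\gamma\Hc_q^1\var\big\|_{L^p(\Om)}\le C(\Om,p)\|\var\|_{H^{s,p}(\Om)}.
\]
Write $u=\Hc_q^1\var$, $l=\fl{s+\yh}$ and $r=s+\yh-l\in[0,1)$. Since $l+1<m$, we may apply \rp{H-L_int} with $k=m$ and with $l$ there replaced by $l+1$ to each function $D^\gamma u$, $|\gamma|\le l$ (note $D^\gamma u\in W^{m-l,p}_\loc(\Om)\subseteq W^{m-(l+1),p}_\loc(\Om)$ with the weighted bounds above). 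This gives $D^\gamma u\in W^{l+1,p}_\loc(\Om)$ with
\[
  \sum_{|\gamma|\le l}\ \sum_{|\alpha|\le l+1}\big\|\del^{\,1-r}D^{\gamma+\alpha'}u\big\|_{L^p(\Om)}\le C\|\var\|_{H^{s,p}(\Om)},
\]
where we used $m-(l+1)=m-l-1$ and peeled off the weight down to $\del^{1-r}$ on the top $(l{+}1)$ derivatives; equivalently,
\[
  \sum_{|\gamma|\le l}\Big(\big\|\del^{\,1-r}D^\gamma u\big\|_{L^p(\Om)}+\big\|\del^{\,1-r}D(D^\gamma u)\big\|_{L^p(\Om)}\Big)\le C\|\var\|_{H^{s,p}(\Om)}.
\]
Since $0\le r\le1$ and $b\Om\in C^2$, \rp{H-L_lo} \ref{Item::H-L_lo::1} applies to each $D^\gamma u$ with $|\gamma|\le l$, yielding
\[
  \|D^\gamma u\|_{H^{r,p}(\Om)}\le C\Big(\big\|\del^{\,1-r}D^\gamma u\big\|_{L^p(\Om)}+\big\|\del^{\,1-r}D(D^\gamma u)\big\|_{L^p(\Om)}\Big)\le C\|\var\|_{H^{s,p}(\Om)}.
\]
Finally, since $l+r=s+\yh$, the norm equivalence $\|u\|_{H^{l+r,p}(\Om)}\approx\sum_{|\gamma|\le l}\|D^\gamma u\|_{H^{r,p}(\Om)}$ on the bounded Lipschitz (indeed $C^2$) domain $\Om$ — which follows from $[H^{l,p}(\Om),H^{l+1,p}(\Om)]_r=H^{l+r,p}(\Om)$ (\rp{interpol_space} \ref{Item::cinterpolsp::Hsp}) together with \rl{Lem::Hkp=Wkp} \ref{Item::Hkp=Wkp::Local} — gives
\[
  \|\Hc_q^1\var\|_{H^{s+\yh,p}(\Om)}=\|u\|_{H^{l+r,p}(\Om)}\le C\|\var\|_{H^{s,p}(\Om)}.
\]
Hence $\Hc_q^1\var\in H^{s+\yh,p}_{(0,q-1)}(\Om)$, as claimed.
\end{proof}
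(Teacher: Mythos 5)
Your reduction to the weighted estimates $\sum_{|\gamma|\le m}\|\del^{\,m-s-\yh}D^\gamma \Hc_q^1\var\|_{L^p(\Om)}\lesssim\|\var\|_{H^{s,p}(\Om)}$ via Propositions \ref{mderest} and \ref{loder_est} matches the paper, and your ``peeling'' step is salvageable but miscited: the weight exponent $m-s-\yh=(m-l-1)+(1-r)$ is not of the integer form $k-l$ assumed in \rp{H-L_int}, so you cannot invoke that proposition literally with ``$l$ replaced by $l+1$''; what you actually need is to iterate the inequality \eqref{wtibp} with the real exponents $j=1-r,2-r,\dots,m-l-1-r$, which is legitimate because the proof of \eqref{wtibp} rests only on \rl{wtibple}, valid for all real $j>-\frac1p$. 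The genuine gap is in your final step. The two-sided equivalence $\|u\|_{H^{l+r,p}(\Om)}\approx\sum_{|\gamma|\le l}\|D^\gamma u\|_{H^{r,p}(\Om)}$ on a bounded $C^2$ domain is precisely the kind of statement the paper can only establish for total order at most $2$ (\rp{Prop::EqvNormC2}), because flattening the boundary by a $C^2$ diffeomorphism preserves $H^{s,p}$ only for $-1\le s\le 2$ (\rl{Lem::EqvNormC2Lem}). It does not ``follow from $[H^{l,p}(\Om),H^{l+1,p}(\Om)]_r=H^{l+r,p}(\Om)$ together with \rl{Lem::Hkp=Wkp}'': interpolation of spaces identifies the interpolation space but does not yield the per-function lifting inequality $\|u\|_{H^{l+r,p}(\Om)}\lesssim\sum_{|\gamma|\le l}\|D^\gamma u\|_{H^{r,p}(\Om)}$, and \rp{Prop::EqvNormC2} cannot be iterated past order $2$. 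So your argument is complete only when $s+\yh<2$, i.e. $l\le1$; for $s\ge\frac32$ the assembly step is unjustified.

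The paper circumvents exactly this obstruction by a different assembly: it first handles the half-integers $s=\frac{2k+1}{2}$, where $s+\yh\in\Z^+$ so that \rp{H-L_int} and the identification $W^{k,p}(\Om)=H^{k,p}(\Om)$ apply directly with no fractional lifting; it then handles $\frac1p<s<\frac32$ via \rp{H-L_lo}, which uses \rp{Prop::EqvNormC2} only within its valid range; and finally it interpolates the operator $\Hc_q^1$ itself between these cases using \rp{cinterpol} and \rp{interpol_space} \ref{Item::cinterpolsp::Hsp}. To repair your proof, replace the norm-equivalence step by this operator interpolation (or, equivalently, restrict your direct argument to $s<\frac32$ and to half-integers, then interpolate).
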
\begin{proof}
We divide into cases. 

\noindent
\tit{Case 1:} $s = \frac{2k+1}{2}$, $k \in \Z^+$.

We have $s+ \yh \in \Z^{+} $ and also $s > \frac{1}{p}$. Take $m$ to be any integer greater than $s+\yh$. By Propositions \ref{loder_est} and \ref{H-L_int} we obtain 
\begin{align*} 
\| \Hc_q^1  \var   \|_{W^{s+\yh, p}(\Om)} \lesssim\sum_{|\gamma|\le k}\| \del^{m-s-\yh} D^\gamma\Hc_q^1  \var  \|_{L^p (\Om) }  
\lesssim \| \var \|_{H^{s, p} (\Om) }. 
\end{align*}
Since $\| \Hc_q^1  \var   \|_{W^{s+\yh, p}(\Om)} = \| \Hc_q^1  \var   \|_{H^{s+\yh, p}(\Om)}$ for $s+\yh$ a positive integer, the result follows.

\noindent
\tit{Case 2:} $s \in \left( \frac{1}{p}, \frac{3}{2} \right)$ and $s + \yh \in [0,1]$.

We apply \rp{H-L_lo} \ref{Item::H-L_lo::1} and \rp{loder_est} for $m=1$ to get
\[
\| \Hc_q^1  \var  \|_{H^{s+\yh, p} (\Om)} \leq \| \del^{1- s - \yh} \Hc_q^1  \var  \|_{L^p (\Om)} + \| \del^{1- s - \yh} D \Hc_q^1  \var  \|_{L^p (\Om)} 
\leq C \| \var \|_{H^{s,p}(\Om)}. 
\]
\tit{Case 3:}  $s \in \left( \frac{1}{p}, \frac{3}{2}  \right)$ and $s+\yh \in [1,2]$. 

We apply \rp{H-L_lo} \ref{Item::H-L_lo::2} and \rp{loder_est} for $m=2$ to get
\begin{align*} 
\| \Hc_q^1  \var  \|_{H^{s+\yh, p}(\Om)} 
&\leq C \| \Hc_q^1  \var  \|_{L^p (\Om) } + \| \del^{2-s-\yh} D \Hc_q^1  \var  \|_{L^p (\Om) }  + \| \del^{2-s-\yh} D^2 \Hc_q^1  \var  \|_{L^p (\Om) }
\\ &\leq C \| \var \|_{H^{s, p} (\Om) }. 
\end{align*}
Finally the remaining cases can be done by interpolation. 
\end{proof}

By combining \rp{H0_prop} and \rp{H1_prop} we obtain \rt{H_thm}.

\section{$\La^r$ estimate for $ r > 0$}  

Let $r>0$ and let $\Omega \seq \C^n$ be a bounded Lipschitz domain. We denote by $\Lambda^r(\Omega)$ the space of H\"older-Zygmund functions of order $r$ up to the boundary.

We first recall the interpolation result of  H\"older-Zygmund spaces.
\begin{prop}[Complex interpolation of $\Lambda^r$-spaces]
Let $r_0,r_1>0$ and let $\Omega\subset\C^n$ be a bounded Lipschitz domain. For $0<\theta< 1$, let $r_\theta=(1-\theta) r_0+\theta r_1$. Then $[\Lambda^{r_0}(\Omega),\Lambda^{r_1}(\Omega)]_\theta=\Lambda^{r_\theta}(\Omega)$.
\end{prop}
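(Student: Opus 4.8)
The plan is to reduce the statement to the case $\Om=\R^N$ by using Rychkov's universal extension operator to realize $\La^r(\Om)$ as a retract of $\La^r(\R^N)$ \emph{uniformly in} $r$, and then to invoke the known complex interpolation identity for the scale $\Bs^r_{\infty\infty}(\R^N)=\La^r(\R^N)$ on the whole space.

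Concretely, the first step is: fix a bounded neighborhood $\U$ of $\ov\Om$ and let $\Ec=\Ec_\Om$ be the extension operator from \eqref{E_def}. By \rp{Prop::ext_op} and the remark after it, $\Ec\colon\La^r(\Om)\to\La^r(\R^N)$ is bounded for every $r>0$ with $(\Ec f)|_\Om=f$, while the restriction map $R\colon\La^r(\R^N)\to\La^r(\Om)$, $Rg=g|_\Om$, is trivially bounded for every $r>0$ and satisfies $R\circ\Ec=\mathrm{id}$ on $\La^r(\Om)$. Thus the couple $(\La^{r_0}(\Om),\La^{r_1}(\Om))$ is a retract of $(\La^{r_0}(\R^N),\La^{r_1}(\R^N))$ via the single pair $(\Ec,R)$, and the general fact that the complex interpolation functor commutes with retracts (see e.g.\ \cite[Section 1.2.4]{Tr95}) gives $[\La^{r_0}(\Om),\La^{r_1}(\Om)]_\theta=R\big([\La^{r_0}(\R^N),\La^{r_1}(\R^N)]_\theta\big)$ with equivalent norms. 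Equivalently, a distribution $f$ on $\Om$ lies in $[\La^{r_0}(\Om),\La^{r_1}(\Om)]_\theta$ precisely when $\Ec f\in[\La^{r_0}(\R^N),\La^{r_1}(\R^N)]_\theta$, with comparable norms; so it suffices to prove $[\La^{r_0}(\R^N),\La^{r_1}(\R^N)]_\theta=\La^{r_\theta}(\R^N)$, after which $\Ec f\in\La^{r_\theta}(\R^N)\iff f=(\Ec f)|_\Om\in\La^{r_\theta}(\Om)$ closes the argument.

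For the $\R^N$ case I would argue in the same spirit a second time: by the Littlewood--Paley description of $\La^r(\R^N)=\Bs^r_{\infty\infty}(\R^N)$, the maps $f\mapsto(\phi_j\ast f)_{j\ge0}$ and $(g_j)\mapsto\sum_j\psi_j\ast g_j$ exhibit $\La^r(\R^N)$ (uniformly in $r$) as a retract of the weighted sequence space $\ell^\infty_r(L^\infty(\R^N))=\{(g_j):\sup_j2^{jr}\|g_j\|_{L^\infty}<\infty\}$, which may be identified with a weighted $L^\infty$ space over $\N\times\R^N$. The $p=\infty$ instance of the Stein--Weiss/Calder\'on theorem on complex interpolation of weighted $L^p$ spaces (the $p=\infty$ case of \cite[Theorem 1.18.5]{Tr95}), together with $r_0\neq r_1$, then gives $[\ell^\infty_{r_0}(L^\infty),\ell^\infty_{r_1}(L^\infty)]_\theta=\ell^\infty_{r_\theta}(L^\infty)$, and transporting this back through the retract yields $[\La^{r_0}(\R^N),\La^{r_1}(\R^N)]_\theta=\La^{r_\theta}(\R^N)$; alternatively this last identity can be quoted directly from Triebel's interpolation theory for Besov spaces with $q_0=q_1=\infty$, see e.g.\ \cite[Theorem 2.4.7]{Tr83}.

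The main obstacle, and the only genuinely delicate point if one wants a self-contained argument, is the $p=\infty$ endpoint: non-separability of $L^\infty$ (hence of $\Bs^r_{\infty\infty}$) means one cannot produce the analytic family $F(z)$ on the strip realizing a given $a\in\ell^\infty_{r_\theta}(L^\infty)$ by the naive diagonal guess $F_j(z)=a_j\,2^{j(\theta-z)(r_1-r_0)}$, which has the right value at $\theta$ and the right boundary behaviour coordinatewise but is not continuous up to the boundary of the strip in the $\ell^\infty_r(L^\infty)$ norm. This is handled exactly as in Calder\'on's original construction, by inserting a Gaussian regularizing factor $e^{\ve(z^2-\theta^2)}$ and truncating to finitely many frequencies, which restores boundary continuity while keeping $F(\theta)$ and the two boundary norms under control; the hypothesis $r_0\neq r_1$ is what guarantees the construction reaches all of $\La^{r_\theta}$ rather than a proper subspace. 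Everything else — boundedness of $\Ec$ and of restriction, the Littlewood--Paley equivalences, and the retract lemma — is already available from Sections 2 and 3 or is routine.
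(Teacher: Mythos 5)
Your overall route is the same as the paper's: the paper disposes of this proposition in one line by citing \cite[Theorem 6.4.5(6)]{B-L76} for the identity on $\R^N$ and \cite[Theorems 1.110 and 1.122]{Tr06} for the passage to a bounded Lipschitz domain, and your extension--restriction retract via $\Ec$ together with the Littlewood--Paley retract onto weighted sequence spaces is precisely the machinery behind those citations. The retract reductions themselves are fine.

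The one step you attempt to prove rather than quote --- the identity $[\ell^\infty_{r_0}(L^\infty),\ell^\infty_{r_1}(L^\infty)]_\theta=\ell^\infty_{r_\theta}(L^\infty)$ for the \emph{classical} first complex method --- is exactly where the argument cannot be closed in the way you describe, and the obstruction is structural, not technical. For Calder\'on's first method one always has that $X_0\cap X_1$ is dense in $[X_0,X_1]_\theta$ (see \cite[Theorem 4.2.2]{B-L76}); combined with the continuous embedding $[\La^{r_0},\La^{r_1}]_\theta\hookrightarrow\La^{r_\theta}$ this forces $[\La^{r_0},\La^{r_1}]_\theta$ to lie in the closure of $\La^{\max(r_0,r_1)}$ in $\La^{r_\theta}$, i.e.\ in the \emph{little} H\"older--Zygmund space, which is a proper closed subspace of $\La^{r_\theta}$. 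Consequently no regularization can rescue the naive diagonal family: your truncation-plus-Gaussian construction only shows that the truncated sequences are uniformly bounded in the interpolation space, and since they converge to $a$ merely pointwise (not in $\ell^\infty_{r_\theta}$-norm when $2^{jr_\theta}\|a_j\|_{L^\infty}\not\to0$), membership of $a$ itself does not follow --- and, by the density argument, cannot follow for the classical method. To make this step rigorous you should either treat the cited interpolation theorem as a black box (as the paper does, with its slightly weaker admissibility class in the definition of $[X_0,X_1]_\theta$), or replace the complex method by the real method $(\La^{r_0},\La^{r_1})_{\theta,\infty}=\La^{r_\theta}$, which has no separability issue and suffices for the only use of this proposition in the paper, namely transferring the boundedness of $\Hc_q$ from the endpoints $r_0,r_1$ to intermediate $r_\theta$.
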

The proof of is the combination of \cite[Theorem 6.4.5(6)]{B-L76} and \cite[Theorems 1.110 and 1.122]{Tr06}.

\medskip
In \cite{Gong19}, Gong constructed a solution operator $\Sc_q \var$ to $\dbar u = \var$ which maps any $(0,q)$ form $\var \in \La^r(\Om)$ to  a $(0, q-1)$ form in $\La^{r+\yh} (\Om)$ for all $r > 1$. We now extend this result to all $r > 0$. 

First let us recall the classical Hardy-Littlewood lemma for H\"{o}lder continuous functions. 

\begin{lemma}\label{H-L_Holder} 
	Let $\Om$ be a bounded Lipschitz domain in $\R^N$ and let $\del (x)$ denote the distance function from $x$ to the boundary of $\Om$. If $u$ is a $C^1$ function in $\Om$ and there exists an $0 < \all < 1$ and $C>0$ such that
	\[
	|D u (x)| \leq C \del (x)^{-1 + \all} \quad \text{for every $x \in \Om$}, 
	\] 
	then $u \in \La^{\all}(\Om)$. 
\end{lemma}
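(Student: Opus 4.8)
This is the classical Hardy–Littlewood lemma, and the plan is to estimate the difference $u(x)-u(y)$ for two points $x,y\in\Om$ by integrating the gradient bound along a suitable path, and then to convert this local control into the global Hölder seminorm. Since $\Om$ is a bounded Lipschitz domain, it is a standard fact that $\Om$ is quasiconvex: there is a constant $c_\Om$ such that any two points $x,y\in\Om$ can be joined by a rectifiable path $\gamma\subseteq\Om$ of length at most $c_\Om|x-y|$. Thus it suffices to prove the bound $|u(x)-u(y)|\lesssim|x-y|^\all$ for $x,y$ close together with an explicit convenient path.

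\textbf{Key steps.} First I would reduce, by quasiconvexity and the triangle inequality, to the case where $x,y$ lie in a single coordinate patch in which $\Om$ is a special Lipschitz domain, say $\Om=\{x_N>\rho(x')\}$ locally, so that $\del(x)\approx x_N-\rho(x')$. Fix $x,y$ with $|x-y|=:t$ small. I would first move each point ``straight up'' in the $x_N$-direction by an amount comparable to $t$ to points $\hht x,\hht y$ with $\del(\hht x),\del(\hht y)\gtrsim t$; along each vertical segment of length $O(t)$ the distance to the boundary at parameter $\tau$ from the starting point is $\gtrsim\del(x)+\tau$ (or just $\gtrsim\tau$), so
\[
|u(x)-u(\hht x)|\le C\int_0^{Ct}(\del(x)+\tau)^{-1+\all}\,d\tau\le C\int_0^{Ct}\tau^{-1+\all}\,d\tau= \tfrac{C}{\all}(Ct)^{\all}\lesssim t^\all,
\]
using $\all>0$; the same bound holds for $|u(y)-u(\hht y)|$. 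Then $\hht x$ and $\hht y$ are joined by a path inside $\Om$ of length $\lesssim t$ staying at distance $\gtrsim t$ from $b\Om$, so $|u(\hht x)-u(\hht y)|\le C t\cdot t^{-1+\all}=Ct^\all$. Combining the three estimates via the triangle inequality gives $|u(x)-u(y)|\lesssim|x-y|^\all$. Finally, boundedness of $u$ follows by taking $y$ to be a fixed interior reference point (or integrating from the interior), again using $\all>0$ to keep $\int_0^{\del(x)}\tau^{-1+\all}d\tau$ finite; together with the seminorm bound this yields $u\in\La^\all(\Om)=C^\all(\overline\Om)$.

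\textbf{Main obstacle.} The only real point requiring care is the geometric one: choosing the joining path so that the integral of $\del^{-1+\all}$ along it is controlled. The singularity of $\del^{-1+\all}$ is integrable (since $\all>0$), which is exactly why ``climbing away from the boundary'' works, but one must verify that in a Lipschitz (not merely smooth) domain the path can be taken with length $\lesssim|x-y|$ while escaping the boundary at a linear rate — this is where the Lipschitz character of $\rho$ enters, guaranteeing that vertical ascent increases $\del$ at a definite rate and that $\hht x,\hht y$ remain in the patch. Everything else is the routine splitting into coordinate charts and summing.
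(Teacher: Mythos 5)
Your argument is correct and is essentially the classical proof of the Hardy--Littlewood lemma that the paper simply cites (Chen--Shaw, Appendix C) rather than reproducing: integrate the gradient bound along vertical ascents of length $O(|x-y|)$ and a connecting path at height $\gtrsim|x-y|$, using $\all>0$ to make $\int_0^t\tau^{-1+\all}\,d\tau\lesssim t^\all$, and use quasiconvexity plus a chart decomposition to globalize. The one detail worth making explicit is that lifting both points by the \emph{same} amount $C|x-y|$ with $C$ large relative to the Lipschitz constant of $\rho$ keeps the straight segment joining $\hht x$ and $\hht y$ at distance $\gtrsim|x-y|$ from $b\Om$, but this is routine and your proof is complete in substance.
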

The reader can refer to \cite[p.~345]{C-S01} for a proof. 

\begin{thm} \label{H_thm2}  
    Let $\Omega\seq\C^n$ be a bounded strictly pseudoconvex domain with $C^2$ boundary. Let $1\le q\le n$ and let $\Hc_q$ be given by formula \eqref{hom_opt}, where the extension operator $E$ is defined by formula \eqref{E_def}. Then for any $r > 0$, $\Hc_q$ is a bounded linear operator $\Hc_q: \La^{r}_{(0,q)} (\Om)\to \La^{r+ \yh}_{(0,q-1)}  (\Om)$.  
\end{thm}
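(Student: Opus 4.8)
The plan is to mirror the structure of the Sobolev case (Theorem \ref{H_thm}): split $\Hc_q = \Hc_q^0 + \Hc_q^1$ via \eqref{H0H1} and handle the two pieces separately, using the H\"older--Zygmund analogues of the ingredients already assembled. First I would dispose of $\Hc_q^0\var = \int_\Uc K^0_{0,q-1}\wedge \Ec\var$: since $\Ec:\La^r(\Omega)\to\La^r(\R^N)$ is bounded by \rp{Prop::ext_op}\ref{Item::ext_op::Hold}, and $\Hc_q^0$ is (up to the extension) the solid Newtonian-type kernel, it gains one full derivative; the integer-order gain is in \cite{DG19} and the non-integer case follows by complex interpolation of $\La^r$-spaces (the proposition just stated). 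So $\Hc_q^0\var\in\La^{r+1}(\Om)\hookrightarrow\La^{r+\yh}(\Om)$.

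The substance is the estimate for $\Hc_q^1\var = \int_{\Uc\setminus\ov\Om} K^{01}_{0,q-1}\wedge[\dbar,\Ec]\var$. Here I would run exactly the argument of \rp{mderest} and \rp{loder_est} but with $p=\infty$. Writing $f=[\dbar,\Ec]\var$, the pointwise bound on $D_z^m\mc Kf(z)$ reduces (after the $z$- and $\zeta$-partitions of unity and the coordinate change $\phi_z$) to controlling $\int_{\Uc\setminus\ov\Om} |A_m(z,\zeta)|\,|f(\zeta)|\,dV(\zeta)$ with $|A_m|\lesssim |\Phi(z,\zeta)|^{-(m+1)}|\zeta-z|^{-(2n-3)}$. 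Taking $\del(\zeta)^{1-r}|f(\zeta)|\le \|\del^{1-r}[\dbar,\Ec]\var\|_{L^\infty}$ out of the integral via the estimate $\|\del^{1-r}[\dbar,\Ec]\var\|_{L^\infty(\Uc\setminus\ov\Om)}\lesssim\|\var\|_{\La^r(\Om)}$ — which is exactly Corollary \ref{Cor::Comm_Hold} glued over the finitely many special Lipschitz charts (using the $\La^r$ analogue of \rl{Enu_commest_le}, plus \rp{C3} applied to the ``error'' term $g$ supported in $\Uc\setminus\Om$ as in the proof of \rp{mderest}) — it remains to bound $\int \del(\zeta)^{r-1}|A_m(z,\zeta)|\,dV(\zeta)$. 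Using \eqref{Philbe} and polar coordinates in $t$, this is the integral of \rl{intestle} with $\all=r-1$ and $\beta=m-1$, giving $\del(z)^{(r-1)-(m-1)+\yh}=\del(z)^{r-m+\yh}$, hence $|D_z^m\Hc_q^1\var(z)|\lesssim \del(z)^{r-m+\yh}\|\var\|_{\La^r(\Om)}$; the lower-order derivatives $D_z^k$, $k\le m$, satisfy the same bound by the reduction in \rp{loder_est}. The constraint needed for \rl{intestle} is $-1<r-1<m-\yh$, i.e. $r>0$ and $m>r+\yh$, which is exactly why the result holds for all $r>0$.

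Finally I would convert these weighted derivative bounds into membership in $\La^{r+\yh}(\Om)$. For $0<r+\yh<1$ this is the classical Hardy--Littlewood lemma \rl{H-L_Holder} applied with $m=1$: $|D\Hc_q^1\var|\lesssim\del^{r-\yh}=\del^{-1+(r+\yh)}$ gives $\Hc_q^1\var\in\La^{r+\yh}(\Om)$. For larger $r$ one uses the standard higher-order version of the Hardy--Littlewood lemma (take $m=\lceil r+\yh\rceil$ or $m=\lceil r+\yh\rceil+1$): the bounds $|D^k\Hc_q^1\var|\lesssim\del^{r+\yh-k}$ for $0\le k\le m$ place $\Hc_q^1\var$ in $\La^{r+\yh}(\Om)$, splitting at half-integers as in \rp{H1_prop} (when $r+\yh$ is an integer, use that the weighted bounds control $\|D^\alpha\Hc_q^1\var\|_{L^\infty}$ directly for $|\alpha|\le r+\yh$ together with a Hardy--Littlewood estimate on the top derivative). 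Combining with the $\Hc_q^0$ bound yields $\Hc_q:\La^r_{(0,q)}(\Om)\to\La^{r+\yh}_{(0,q-1)}(\Om)$.

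I expect the main obstacle to be the $p=\infty$ version of the commutator bound, i.e. making sure Corollary \ref{Cor::Comm_Hold} (which is stated on a single special Lipschitz domain) can be transplanted and glued to give $\|\del^{1-r}[\dbar,\Ec]\var\|_{L^\infty(\Uc\setminus\ov\Om)}\lesssim\|\var\|_{\La^r(\Om)}$ on the bounded $C^2$ domain: one must redo the decomposition $[\dbar,\Ec]f = (D\chi_0)f+\sum_j\chi_j[D,E_j](\chi_j f)+\text{(error }g\text{)}$, invoke the $\La^r$-analogue of \rl{Enu_commest_le}, and control the error term $g$, which is supported in $\Uc\setminus\Om$, by $\|\del^{-r}g\|_{L^\infty}\lesssim\|g\|_{\La^r_0}\lesssim\|f\|_{\La^r(\Om)}$ — the last step needing a H\"older-space substitute for \rp{C3} (boundedness of $\La^r_0(\Om)\hookrightarrow$ the weighted $L^\infty$). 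Everything else is a routine rerun of Section 6 with the single change $L^p\to L^\infty$ and the exponent $s$ replaced by $r$.
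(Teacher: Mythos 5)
Your core estimate coincides with the paper's: for the range $0<r<\tfrac12$ the paper does exactly what you describe for $m=1$ — pull out $\|\delta^{1-r}[\dbar,\Ec]\varphi\|_{L^\infty(\Uc\setminus\overline\Omega)}\lesssim\|\varphi\|_{\Lambda^r}$ via Corollary \ref{Cor::Comm_Hold}, bound $\int\delta(\zeta)^{r-1}|A_1(z,\zeta)|\,dV(\zeta)\lesssim\delta(z)^{r-\frac12}$ by Lemma \ref{intestle} with $\alpha=r-1$, $\beta=0$, and conclude with Lemma \ref{H-L_Holder}. Where you diverge is the treatment of $r\ge\tfrac12$: the paper simply quotes Gong's theorem for $r>1$ (observing that the only property of the extension operator used there is $[D,\Ec]:\Lambda^r\to\Lambda^{r-1}$, which holds by Proposition \ref{Prop::ext_op}\ref{Item::ext_op::Hold}) and then closes the gap $\tfrac12\le r\le1$ by complex interpolation of $\Lambda^r$-spaces, whereas you rerun the full higher-derivative machinery of Propositions \ref{mderest}--\ref{loder_est} with $p=\infty$ and arbitrary $m>r+\tfrac12$. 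Your route is more uniform and self-contained (no reliance on Gong's $r>1$ theorem), but it requires a higher-order Hardy--Littlewood lemma for H\"older--Zygmund spaces that the paper never proves — Lemma \ref{H-L_Holder} only covers $0<\alpha<1$ — and your parenthetical for integer $r+\tfrac12$ is the weak point: the weighted bounds you actually obtain are $|D^k\Hc_q^1\varphi|\lesssim\delta^{r+\frac12-m}$ for all $k\le m$ (a \emph{negative} power of $\delta$), so they do not directly control $\|D^\alpha\Hc_q^1\varphi\|_{L^\infty}$ for $|\alpha|\le r+\tfrac12$ without first proving a H\"older analogue of Proposition \ref{H-L_int}. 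This is fillable (such lemmas are standard, and in any case the final interpolation you have available through the $[\Lambda^{r_0},\Lambda^{r_1}]_\theta=\Lambda^{r_\theta}$ result absorbs the half-integer values of $r$), and your flagging of the gluing issue for the commutator — the $\Lambda^r$-analogue of Lemma \ref{Enu_commest_le} plus the weighted $L^\infty$ control of the error term $g$ supported in $\Uc\setminus\Omega$ — matches what the paper glosses over; so the proposal is correct in substance but trades the paper's short citation-plus-interpolation argument for extra lemmas you would need to supply.
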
  
\begin{proof}

For $\var \in \La^r(\Omega)$ with $r > 1$, see \cite{Gong19}, where the homotopy operator is defined by formula \eqref{hom_opt} in the classical sense. We note that Gong used a different extension operator than ours, but in the case $r > 1$ the proofs work the same since the only property of the extension operator used in his proof is the fact $[D, \Ec]: \La^r(\Omega) \to \La^{r-1}(\Omega)$ for $r>1$, which obviously holds for our extension operator as well by Proposition \ref{Prop::ext_op} \ref{Item::ext_op::Hold}. 
	
By interpolation we only need to prove for $0< r< \yh$. 
In view of \rl{H-L_Holder}, it suffices to show that  
\[
  \sup_{z \in \Om} \del(z)^{1-(r+\yh)} \left| D \Hc^1_q \var (z) \right| \leq C \| \var \|_{\La^r (\Om) }. 
\]
We have 
\[
  \del(z)^{1-(r+\yh)}  | D \Hc^1_q  \var (z) | = \del(z)^{ \yh-r} \left| \int_{\U \sm\overline \Om} D_z K^{01}_{0,q} (z, \zeta) \we [\dbar, \Ec] \var (\zeta) \, dV(\zeta) \right|. 
\]
Write $A_1(z, \zeta) = D_z K^{01}_{0, q} (z, \zeta)$ and $f = [\dbar, \Ec] \var$. By \eqref{commest_Holder}, we get $\| \del^{1-r} f \|_{L^{\infty}(U \sm\overline \Om)} \leq \| f \|_{\La^r (\Om)} $, so 
\begin{align*} 
  \del(z)^{1-(r+\yh)}  | D \Hc^1_q  \var (z) | 
  &= \del(z) ^{\yh - r} \int_{\U \sm\overline \Om} | A_1 (z, \zeta)|  | f(\zeta)| \, dV(\zeta)  
\\ & = \del(z) ^{\yh - r} \int_{\U \sm\overline \Om} \del (\zeta)^{r-1} | A_1 (z, \zeta)|  \del (\zeta)^{1-r} |f(\zeta)| \, dV(\zeta)  
	\\& \leq \del(z)^{\yh - r}  \left(   \int_{\U \sm\overline \Om} \del (\zeta)^{r-1} | A_1 (z, \zeta)|\, dV(\zeta)  \right) \|f \|_{\La^r (\Om)}
\\ &\leq C \|f \|_{\La^r (\Om)}, 
\end{align*} 
where in the last inequality we apply \rl{intestle} for $\all = r-1$ and $\beta = 0$ (which is possible since $0 < r < \yh$ and $-1 < \all < \beta + \yh$) to get   
	\begin{align*}
	\int_{\U \sm\overline \Om} \del (\zeta)^{r-1} | A_1 (z, \zeta)| \, dV(\zeta)
	&\leq C  \int_{s_1=0}^{1} \int_{s_2=0}^{1} \int_{t =0}^{1} \frac{s_1^{r-1} t^{2n-3}\, ds_1 \, ds_2  \, dt}{(\del (z) + s_1 + s_2 + t^{2})^{2} (\del(z)+s_1+s_2+t)^{2n-3}} 
	\\
	&\leq C \del (z)^{r - \yh}.
	\end{align*}
	This completes the proof.
\end{proof}

\appendix
\section{An equivalent norm property}
\begin{prop}\label{Prop::EqvNormC2}
	Let $\Omega\seq\R^N$ be a bounded $C^2$-domain and let $1<p<\infty$.  
	\begin{enumerate}[(i)]
		\item\label{Item::EqvNormC2::02} For $0<s<2$, $H^{s,p}(\Omega)$ has equivalent norm
		\begin{equation*}
		\|f\|_{H^{s,p}(\Omega)}\approx\|f\|_{H^{s-1,p}(\Omega)}+\|Df\|_{H^{s-1,p}(\Omega)}.
		\end{equation*}
		\item\label{Item::EqvNormC2::12}  For $1<s<2$, $H^{s,p}(\Omega)$ has equivalent norm
		\begin{equation*}
		\|f\|_{H^{s,p}(\Omega)}\approx\|f\|_{H^{s-2,p}(\Omega)}+\|Df\|_{H^{s-2,p}(\Omega)}+\|D^2f\|_{H^{s-2,p}(\Omega)}.
		\end{equation*}
	\end{enumerate}
\end{prop}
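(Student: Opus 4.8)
The statement to prove is Proposition~\ref{Prop::EqvNormC2}: for a bounded $C^2$-domain $\Omega\subseteq\R^N$ and $1<p<\infty$, the space $H^{s,p}(\Omega)$ has the equivalent norms $\|f\|_{H^{s-1,p}(\Omega)}+\|Df\|_{H^{s-1,p}(\Omega)}$ for $0<s<2$, and $\|f\|_{H^{s-2,p}(\Omega)}+\|Df\|_{H^{s-2,p}(\Omega)}+\|D^2f\|_{H^{s-2,p}(\Omega)}$ for $1<s<2$.

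\smallskip

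\textbf{Plan.} The plan is to reduce everything to the global statement on $\R^N$ using the Rychkov-type extension operator $\Ec$ from Proposition~\ref{Prop::ext_op}, and then to verify the claimed equivalence on $\R^N$ by Littlewood--Paley / Fourier multiplier arguments. First I would record the global fact: for $t\in\R$ and $1<p<\infty$, the operator $f\mapsto (f, Df)$ is bounded $H^{t+1,p}(\R^N)\to H^{t,p}(\R^N)\oplus H^{t,p}(\R^N)$, with a bounded left inverse, so that $\|f\|_{H^{t+1,p}(\R^N)}\approx \|f\|_{H^{t,p}(\R^N)}+\|Df\|_{H^{t,p}(\R^N)}$; and similarly $\|f\|_{H^{t+2,p}(\R^N)}\approx \|f\|_{H^{t,p}(\R^N)}+\|Df\|_{H^{t,p}(\R^N)}+\|D^2f\|_{H^{t,p}(\R^N)}$. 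These are classical: boundedness of $\partial_j:H^{t+1,p}\to H^{t,p}$ is immediate from the symbol bound, and for the reverse inequality one writes the Bessel potential $(I-\Delta)^{(t+1)/2}$ as $\sum_j R_j\,(I-\Delta)^{t/2}\partial_j + c\,(I-\Delta)^{t/2}$ where $R_j$ are Riesz-type operators with symbol $2\pi i\xi_j(1+4\pi^2|\xi|^2)^{-1/2}$, which are $L^p$-bounded multipliers for $1<p<\infty$ by Mikhlin--Hörmander (alternatively cite \cite{Tr83} directly). The second-order version is the same with the identity $1+4\pi^2|\xi|^2 = 1 + \sum_j 4\pi^2\xi_j^2$, i.e. $(I-\Delta) = I - \sum_j\partial_j^2$.

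\smallskip

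\textbf{From $\R^N$ to $\Omega$.} With the global equivalence in hand, the ``$\gtrsim$'' direction on $\Omega$ is trivial: for any $\widetilde f\in H^{s,p}(\R^N)$ with $\widetilde f|_\Omega=f$ we have $\widetilde f|_\Omega = f$, $D\widetilde f|_\Omega = Df$ (and $D^2\widetilde f|_\Omega=D^2 f$), so
\[
\|f\|_{H^{s-1,p}(\Omega)}+\|Df\|_{H^{s-1,p}(\Omega)} \le \|\widetilde f\|_{H^{s-1,p}(\R^N)}+\|D\widetilde f\|_{H^{s-1,p}(\R^N)} \lesssim \|\widetilde f\|_{H^{s,p}(\R^N)},
\]
and taking the infimum over $\widetilde f$ gives $\lesssim \|f\|_{H^{s,p}(\Omega)}$; likewise in case~\ref{Item::EqvNormC2::12}. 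For the reverse direction I would apply the extension operator $\Ec=\Ec_\Omega$ of \eqref{E_def}, which by Proposition~\ref{Prop::ext_op} (plus the partition-of-unity remark after it) is bounded $H^{t,p}(\Omega)\to H^{t,p}(\R^N)$ for \emph{every} $t\in\R$ simultaneously, and satisfies $(\Ec f)|_\Omega=f$. Then, using the global equivalence on $\R^N$ applied to $g:=\Ec f$,
\[
\|f\|_{H^{s,p}(\Omega)}\le \|\Ec f\|_{H^{s,p}(\R^N)} \lesssim \|\Ec f\|_{H^{s-1,p}(\R^N)}+\|D(\Ec f)\|_{H^{s-1,p}(\R^N)}.
\]
Now $D(\Ec f)$ need not equal $\Ec(Df)$, so I would insert the commutator: write $D(\Ec f)=\Ec(Df)+[D,\Ec]f$. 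The term $\|\Ec(Df)\|_{H^{s-1,p}(\R^N)}\lesssim \|Df\|_{H^{s-1,p}(\Omega)}$ by boundedness of $\Ec$. For the commutator term, since $[D,\Ec]f$ is supported in $\U\setminus\Omega$ and (for $s-1>\tfrac1p-1$, i.e. $s>\tfrac1p$) lies in $H^{s-1,p}_0(\U\setminus\overline\Omega)$, one could bound it; but to cover the full range $0<s<2$ the cleanest route is to avoid the commutator entirely and instead use a \emph{higher-order} extension. Namely, I would take the extension of $f$ together with its tangential data — or, more simply, observe that $\Ec: H^{t,p}(\Omega)\to H^{t,p}(\R^N)$ for all $t$ means we may apply it at level $t=s-1$ to $f$ itself and at level $t=s-1$ to each first derivative, then reconstruct: set $g_0=\Ec f$, $g_j=\Ec(\partial_j f)$, and note $\partial_j g_0 - g_j$ is supported in $\overline\Omega^c$ with $\|\partial_j g_0-g_j\|_{H^{s-1,p}(\R^N)}\lesssim\|f\|_{H^{s,p}(\Omega)}$ via Proposition~\ref{Prop::ext_op} and the fact that $\Ec$ commutes with restriction; then the multiplier reconstruction $(I-\Delta)^{s/2}f = \big(\sum_j R_j(I-\Delta)^{(s-1)/2}\partial_j + c(I-\Delta)^{(s-1)/2}\big)f$ applied inside $\Omega$, combined with the $\R^N$-estimates for $g_0,\dots,g_N$, yields $\|f\|_{H^{s,p}(\Omega)}\lesssim \|f\|_{H^{s-1,p}(\Omega)}+\sum_j\|\partial_j f\|_{H^{s-1,p}(\Omega)}$. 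The second-order case is handled identically using $g_{jk}=\Ec(\partial_j\partial_k f)$ and $(I-\Delta) = I-\sum\partial_j^2$.

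\smallskip

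\textbf{Main obstacle.} The routine parts (Fourier-multiplier boundedness, the ``$\gtrsim$'' inequality) are standard. The delicate point is the ``$\lesssim$'' inequality, specifically the bookkeeping that lets us apply the extension operator at the \emph{negative} smoothness level $s-1$ (resp.\ $s-2$) and still reconstruct $f$ on $\Omega$: one must make sure the difference between "derivative of the extension" and "extension of the derivative" is controlled in $H^{s-1,p}(\R^N)$, which is exactly where the $C^2$-regularity of $b\Omega$ enters (it is what guarantees $\Ec$ acts on the full scale $s-2\le t\le s$ and that the relevant trace/restriction identities make sense; cf.\ \cite[Theorem 1.222]{Tr06} and the discussion around Remark~\ref{Rmk::Hsp=Fp2s}). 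In fact, since this proposition is used in the paper only as a citation-level tool, the most economical write-up is: reduce to $\R^N$ by $\Ec$ exactly as above, cite the $\R^N$ equivalence from \cite[Section~2.3]{Tr83} (or prove it by the two-line multiplier argument), and invoke \cite{Tr06} for the $C^2$-domain extension on the needed range of smoothness indices. I expect the write-up to be short once the extension-operator range is pinned down, and that pinning-down is the one place that genuinely uses $b\Omega\in C^2$ rather than merely Lipschitz.
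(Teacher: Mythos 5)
Your ``$\gtrsim$'' direction and the reduction of \ref{Item::EqvNormC2::12} to two applications of \ref{Item::EqvNormC2::02} are fine, and you correctly isolate the hard point: bounding $\|f\|_{H^{s,p}(\Omega)}$ by $\|f\|_{H^{s-1,p}(\Omega)}+\|Df\|_{H^{s-1,p}(\Omega)}$. But the step you propose to close it --- the ``reconstruction'' from $g_0=\Ec f$ and $g_j=\Ec(\partial_j f)$ --- does not work as written, for two reasons. First, the bound you invoke, $\|\partial_j g_0-g_j\|_{H^{s-1,p}(\R^N)}\lesssim\|f\|_{H^{s,p}(\Omega)}$, has on its right-hand side exactly the quantity you are trying to estimate, so it cannot enter the proof of the reverse inequality; the same circularity afflicts the commutator route, since $\|[D,\Ec]f\|_{H^{s-1,p}(\R^N)}$ can only be controlled (by anything available here, e.g.\ Theorem \ref{commest_thm}) in terms of $\|f\|_{H^{s,p}(\Omega)}$ again. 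Second, and more fundamentally, the identity $(I-\Delta)^{s/2}=\sum_jR_j(I-\Delta)^{(s-1)/2}\partial_j+c\,(I-\Delta)^{(s-1)/2}$ is a global identity on $\R^N$ built from nonlocal operators. If you assemble $F:=(I-\Delta)^{-1}\bigl(g_0-\sum_j\partial_jg_j\bigr)$, or any Fourier-multiplier combination of the $g_j$, you can check that $(I-\Delta)F$ agrees with $f-\Delta f$ on $\Omega$, but since $(I-\Delta)^{-1}$ is not local this does not give $F|_\Omega=f$; so $F$ is not an extension of $f$, and its $H^{s,p}(\R^N)$ norm controls nothing about $\|f\|_{H^{s,p}(\Omega)}$. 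This is precisely the obstruction that forces one to work with support-preserving (one-sided) operators on a half-space, and it is not a matter of bookkeeping.

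For comparison, the paper closes the gap by a different route: it takes the half-space equivalence $\|g\|_{H^{s,p}(\R^N_+)}\approx\|g\|_{H^{s-1,p}(\R^N_+)}+\|Dg\|_{H^{s-1,p}(\R^N_+)}$ (for $g$ supported in $\B^N\cap\overline{\R^N_+}$) as known from \cite[Theorem 3.3.5(ii)]{Tr83}, then localizes with a partition of unity and transports each boundary piece to $\R^N_+$ by a $C^2$ boundary-flattening diffeomorphism $\Phi_\nu$. The only new ingredient is then Lemma \ref{Lem::EqvNormC2Lem}: $\tilde f\mapsto\tilde f\circ\Phi$ is bounded on $H^{t,p}(\R^N)$ for $-1\le t\le2$ and $\tilde f\mapsto\tilde f\,D\Phi$ for $-1\le t\le1$, proved by checking the integer cases directly, obtaining $t=-1$ by duality with $W^{1,p'}$, and interpolating; this is where $b\Omega\in C^2$ genuinely enters, not in the range of the extension operator (Rychkov's $\Ec$ already acts on all of $\R$ for merely Lipschitz domains). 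To salvage your outline you should either adopt this localize-and-flatten scheme, or supply an honest proof of the equivalence on a half-space using one-sided (support-preserving) substitutes for $(I-\Delta)^{-1/2}$; citing the global $\R^N$ multiplier identity alone will not do it.
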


Note that the above results are known for $C^\infty$-domain (see \cite[Theorem 3.3.5(ii)]{Tr83}). We shall prove it by reducing to the smooth case.

To prove Proposition \ref{Prop::EqvNormC2} we first need a lemma. 

\begin{lemma}\label{Lem::EqvNormC2Lem}
	Let $\Phi:\R^N\to\R^N$ be a $C^2$-diffeomorphism such that $D\Phi$ and $D\Phi^{-1}$ both have bounded $C^1$ norms. Then
	\begin{enumerate}[(i)]
	  \item\label{Item::EqvNormC2Lem::Mult} $\tilde f\mapsto\tilde fD\Phi$ defines a bounded linear map $H^{s,p}(\R^N)\to H^{s,p}(\R^N)$ for all $1<p<\infty$ and $-1\le s\le 1$.
		\item\label{Item::EqvNormC2Lem::Comp} $\tilde f\mapsto\tilde f\circ\Phi$ defines a bounded linear map $H^{s,p}(\R^N)\to H^{s,p}(\R^N)$ for all $1<p<\infty$ and $-1\le s\le 2$.
	\end{enumerate}

\end{lemma}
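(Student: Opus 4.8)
The plan is to prove both statements by a combination of duality, interpolation, and the base cases $s=0$ and $s=\pm 1$ (resp. $s=2$), reducing everything to standard facts about pointwise multipliers on $L^p$ and $W^{1,p}$.

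For part \ref{Item::EqvNormC2Lem::Mult}, first I would handle $s=0$: since $D\Phi$ is bounded, $\tilde f\mapsto \tilde f D\Phi$ is obviously bounded on $L^p(\R^N)=H^{0,p}(\R^N)$. Next, $s=1$: since $D\Phi$ has bounded $C^1$ norm, the product rule gives $D(\tilde f D\Phi) = (D\tilde f)(D\Phi) + \tilde f (D^2\Phi)$, and both terms are controlled in $L^p$ by $\|\tilde f\|_{W^{1,p}}$; invoking \rl{Lem::Hkp=Wkp}\ref{Item::Hkp=Wkp::Global} to identify $H^{1,p}(\R^N)=W^{1,p}(\R^N)$, this gives boundedness for $s=1$. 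For $s=-1$ I would argue by duality: the transpose of the multiplication map $\tilde f\mapsto \tilde f D\Phi$ on $H^{-1,p}$ is (up to transposing the matrix $D\Phi$ entrywise) the multiplication map on $H^{1,p'}$, which is bounded by the $s=1$ case applied with exponent $p'$; combined with \rp{Prop::DualSpace}\ref{Item::DualSpace::RN}, $H^{-1,p}(\R^N)=H^{1,p'}(\R^N)'$, this yields the $s=-1$ bound. Finally, for $-1<s<1$ I would interpolate: by Proposition \ref{interpol_space}\ref{Item::cinterpolsp::Hsp} (its $\R^N$ analogue, or directly \cite[Theorem 2.4.2(1)]{Tr83}) one has $[H^{-1,p},H^{1,p}]_\theta = H^{s,p}$ with $s = 2\theta-1$, and Proposition \ref{cinterpol} transfers the boundedness.

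For part \ref{Item::EqvNormC2Lem::Comp}, again I would start from the endpoints. The case $s=0$ is the change-of-variables estimate: $\|\tilde f\circ\Phi\|_{L^p}^p = \int |\tilde f|^p |\det D\Phi^{-1}|$, bounded since $D\Phi^{-1}$ is bounded. The case $s=1$: $D(\tilde f\circ\Phi) = ((D\tilde f)\circ\Phi)\cdot D\Phi$, so combining the $s=0$ estimate with boundedness of $D\Phi$ and $\|\tilde f\|_{W^{1,p}}\approx\|\tilde f\|_{H^{1,p}}$ gives the $s=1$ bound. The case $s=2$: differentiating once more, $D^2(\tilde f\circ\Phi)$ is a sum of $((D^2\tilde f)\circ\Phi)\cdot D\Phi\cdot D\Phi$ and $((D\tilde f)\circ\Phi)\cdot D^2\Phi$; the first term is handled by the $s=0$ estimate and boundedness of $D\Phi$, while the second needs $((D\tilde f)\circ\Phi)\cdot D^2\Phi \in L^p$, which follows because $D^2\Phi\in L^\infty$ (here is exactly where the hypothesis "$D\Phi$ has bounded $C^1$ norm" is used) and $D\tilde f\in L^p$; hence boundedness $H^{2,p}(\R^N)\to H^{2,p}(\R^N)$. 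The case $s=-1$ is again by duality: the transpose of composition with $\Phi$ is, up to the Jacobian factor $|\det D\Phi^{-1}|$, composition with $\Phi^{-1}$ followed by multiplication by a bounded $C^1$ function; both operations are bounded on $H^{1,p'}(\R^N)$ by the $s=1$ case (for $\Phi^{-1}$, whose derivative also has bounded $C^1$ norm by hypothesis) together with the multiplier statement from part \ref{Item::EqvNormC2Lem::Mult}. Then $H^{-1,p}=H^{1,p'}{}'$ gives $s=-1$. With boundedness at $s=-1,1,2$ in hand, the remaining range $-1<s<2$ follows by complex interpolation (\rp{cinterpol}) between the pairs $(s=-1,s=1)$ and $(s=1,s=2)$.

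I expect the main obstacle to be the duality arguments for negative $s$: one must check carefully that the transpose of $\tilde f\mapsto\tilde f\circ\Phi$ on $H^{s,p}$ really is (modulo the smooth, bounded Jacobian weight $|\det D\Phi^{-1}|\in C^1$) the map $\tilde g\mapsto \tilde g\circ\Phi^{-1}$ on $H^{-s,p'}$, using the substitution formula $\int (\tilde f\circ\Phi)\,\tilde g = \int \tilde f\,(\tilde g\circ\Phi^{-1})|\det D\Phi^{-1}|$; this identification requires knowing the hypotheses on $\Phi$ are symmetric in $\Phi\leftrightarrow\Phi^{-1}$, which they are since $D\Phi^{-1}$ is assumed to have bounded $C^1$ norm. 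The multiplier facts on $L^p$ and $W^{1,p}$ and the interpolation step are routine once the Littlewood–Paley/Triebel–Lizorkin identifications recalled earlier in the paper are invoked.
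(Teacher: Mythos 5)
Your proposal is correct and follows essentially the same route as the paper: establish the integer cases $s=0,1$ (resp.\ $s=0,1,2$) directly on $W^{k,p}$ via the product/chain rule, obtain $s=-1$ by duality through $H^{-1,p}(\R^N)=W^{1,p'}(\R^N)'$ (with the transpose of composition being $\tilde g\mapsto|\det D\Phi^{-1}|\,(\tilde g\circ\Phi^{-1})$, exactly as you describe), and then interpolate. The only cosmetic difference is that the paper interpolates once between the endpoints $s=-1$ and $s=1$ (resp.\ $s=-1$ and $s=2$) rather than in two stages, and omits the redundant $s=0$ base case in part (i).
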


See also \cite[Theorem 4.3.2]{Tr92} for \ref{Item::EqvNormC2Lem::Comp}. Note that in the reference the result is only proved for the range $N(\frac1p-1)<s<2$, which is not enough for us if $\frac1p-1<-\frac1N$.
\begin{proof}
Clearly the map $[\tilde f\mapsto\tilde f D\Phi]:W^{1,p}(\R^N)\to W^{1,p}(\R^N)$ is bounded linear, and $[\tilde f\mapsto\tilde f D\Phi]:W^{1,p'}(\R^N)\to W^{1,p'}(\R^N)$ is also bounded linear.
	
Since the product operator is self-adjoint and $H^{-1,p}(\R^N)=W^{1,p'}(\R^N)'$(\rp{Prop::DualSpace}), the map $[\tilde f\mapsto\tilde f D\Phi]:H^{-1,p}(\R^N)\to H^{-1,p}(\R^N)$ is also bounded linear.
	
By using interpolation (\rp{cinterpol} and \rp{interpol_space}) , we get $[\tilde f\mapsto\tilde f D\Phi]:H^{s,p}(\R^N)\to H^{s,p}(\R^N)$ for all $-1\le s\le 1$. This finishes the proof of \ref{Item::EqvNormC2Lem::Mult}.
	
For \ref{Item::EqvNormC2Lem::Comp}, clearly $[\tilde f\mapsto\tilde f\circ\Phi]:W^{k,p}(\R^N)\to W^{k,p}(\R^N)$ are bounded linear for $k=0,1,2$ since $D\Phi,D^2\Phi$ are bounded. Since $W^{k,p}=H^{k,p}$ we have the boundedness for $s=0,1,2$.
	
Using change of variables, the adjoint map of $\tilde f\mapsto\tilde f\circ\Phi$ is $\tilde g\mapsto|\det D\Phi^{-1}|\cdot(\tilde g\circ\Phi^{-1})$. 
Clearly $[\tilde g\mapsto \tilde g\circ\Phi^{-1}]:W^{1,p'}(\R^N)\to W^{1,p'}(\R^N)$ is bounded linear. Since $\det D\Phi^{-1}$ is bounded $C^1$ and non-vanishing everywhere, we have $|\det D\Phi^{-1}|\in C^1(\R^N)$. Therefore the map $[\tilde g\mapsto|\det D\Phi^{-1}|\cdot(\tilde g\circ\Phi^{-1})]:W^{1,p'}(\R^N)\to W^{1,p'}(\R^N)$ is bounded linear.
	
Since $H^{-1,p}(\R^N)=W^{1,p'}(\R^N)'$, taking the adjoint back we get the boundedness $[\tilde f\mapsto\tilde f\circ\Phi]:H^{-1,p}(\R^N)\to H^{-1,p}(\R^N)$, which proves the case $s=-1$.
Finally by interpolation, we get $[\tilde f\mapsto\tilde f\circ\Phi]:H^{s,p}(\R^N)\to H^{s,p}(\R^N)$ for all $-1\le s\le 2$.
\end{proof}

\begin{proof}[Proof of Proposition \ref{Prop::EqvNormC2}]
Recall the definition $\| f  \|_{H^{s,p} ( \Om)} = \min_{\wti{f}|_{\Om} = f} \|  \wti{f} \|_{H^{s,p} (\R^N)}$. The ``$\gtrsim$''-part follows from the equivalent norm in $\R^N$ (see \cite[Theorem 2.3.8(ii)]{Tr83}) and the fact that if $\tilde f\in H^{s,p}(\R^N)$ extends $f\in H^{s,p}(\Omega)$ then $D^\alpha\tilde f\in H^{s-1,p}(\R^N)$ extends $D^\alpha f$. Also see the proof of \cite[Theorem 3.3.5(ii)]{Tr83}.
	
We now prove the ``$\lesssim$''-part. It suffices to prove \ref{Item::EqvNormC2::02}, since \ref{Item::EqvNormC2::12} follows by applying \ref{Item::EqvNormC2::02} twice.
	
By \cite[Theorem 3.3.5(ii)]{Tr83}, the result holds on the half space $\R^N_+=\{x_N>0\}$, namely, for every  $r\in\R$ and $1<p<\infty$, the relation
	\begin{align}\label{Eqn::EqvNormC2::Tmp}
	\|g\|_{H^{r,p}(\R^N_+)}&\approx_{r,p}\|g\|_{H^{r-1,p}(\R^N_+)}+\|Dg\|_{H^{r-1,p}(\R^N_+)}, 
	\end{align}
	holds for all $g\in H^{r,p}(\R^N_+)$ supported in $\B^N\cap\overline{\R^N_+}$. Here $\B^N$ is the unit ball in $\R^N$. 
	
By partition of unity, we can find the following: 
	\begin{itemize}
		\item Open sets $(U_\nu)_{\nu=1}^M$ such that $b\Omega\seq\bigcup_{\nu=1}^MU_\nu$.
		\item Functions $\chi_0\in C_c^\infty(\Omega)$, $\chi_\nu\in C_c^\infty(U_\nu)$ for $1\le \nu\le M$ such that $\sum_{\nu=0}^M\chi_\nu|_\Omega\equiv1$.
		\item $C^2$-maps $\Phi_\nu:\R^N\to\R^N$ for $1\le \nu\le M$ such that $\Phi_\nu(\B^N)=U_\nu$, $\Phi_\nu(\B^N\cap\R^N_+)=U_\nu\cap\Omega$ and $D\Phi_\nu,D\Phi_\nu^{-1}$ have bounded $C^1$ norm.
	\end{itemize} 
	Therefore by Lemma \ref{Lem::EqvNormC2Lem} \ref{Item::EqvNormC2Lem::Comp},
	\begin{align*}
	\|f\|_{H^{s,p}(\Omega)}\le\sum_{\nu=0}^M\|\chi_\nu f\|_{H^{s,p}(\Omega)}\lesssim\|\chi_0 f\|_{H^{s,p}(\R^N)}+\sum_{\nu=1}^M\|(\chi_\nu f)\circ\Phi_\nu\|_{H^{s,p}(\R^N_+)}.
	\end{align*}
	By  \cite[Theorem 2.3.8(ii)]{Tr83} we have
	\begin{align*}
	\|\chi_0 f\|_{H^{s,p}(\R^N)}
	&\approx\|\chi_0 f \|_{H^{s-1,p}(\R^N)}+\|D (\chi_0 f)\|_{H^{s-1,p}(\R^N)}
	\\
	& \leq \|\chi_0 f \|_{H^{s-1,p}(\R^N)}+\|f D \chi_0 \|_{H^{s-1,p}(\R^N)}+\| \chi_0 D f\|_{H^{s-1,p}(\R^N)}
	\\
	&\lesssim \|f\|_{H^{s-1,p}(\Omega)}+\| D f\|_{H^{s-1,p}(\Om)}.
	\end{align*}
	
For $1\le \nu\le M$, we apply Lemma \ref{Lem::EqvNormC2Lem} with $-1 \leq s- 1 \leq 1$, 
\begin{align*}
\|(\chi_\nu f)\circ\Phi_\nu\|_{H^{s,p}(\R^N_+)}
  & \lesssim\|(\chi_\nu f)\circ\Phi_\nu\|_{H^{s-1,p}(\R^N_+)}+\|D((\chi_\nu f)\circ\Phi_\nu)\|_{H^{s-1,p}(\R^N_+)}
    \\
&\lesssim \|(\chi_\nu f)\circ\Phi_\nu\|_{H^{s-1,p}(\B^N\cap\R^N_+)}+\|((D(\chi_\nu f))\circ\Phi_\nu) \cdot D\Phi_\nu\|_{H^{s-1,p}(\R^N_+)}
	\\
&\lesssim \|(\chi_\nu f)\circ\Phi_\nu\|_{H^{s-1,p}(\B^N\cap\R^N_+)}+\|(D(\chi_\nu f)\circ\Phi_\nu\|_{H^{s-1,p}(\R^N_+)}
	\\
&\lesssim \|\chi_\nu f\|_{H^{s-1,p}(\Omega)}+\|D(\chi_\nu f)\|_{H^{s-1,p}(\Omega)} 
\lesssim\| f\|_{H^{s-1,p}(\Omega)} +\|Df\|_{H^{s-1,p}(\Omega)}.
\end{align*}
	
By taking sum over $0\le \nu\le M$ we complete the proof.
\end{proof}

\bibliographystyle{amsalpha}
\bibliography{master}

\end{document}